\documentclass[10pt]{amsart}
\usepackage[leqno]{amsmath}
\usepackage{amsthm}
\usepackage{amsfonts, hyperref, tikz-cd, enumerate}
\usepackage{amssymb}
\usepackage{eucal}
\usepackage[all]{xy}
\usepackage{mathtools}

\DeclareFontFamily{OT1}{rsfs}{}
\DeclareFontShape{OT1}{rsfs}{n}{it}{<-> rsfs10}{}
\DeclareMathAlphabet{\mathscr}{OT1}{rsfs}{n}{it}


   \topmargin=0in
   \oddsidemargin=0in
   \evensidemargin=0in
   \textwidth=6.5in
   \textheight=8.5in

\usepackage{xspace}
\usepackage{epsfig,epic,eepic,latexsym,color}
\usepackage[all]{xy}
\usepackage{comment} 
\numberwithin{equation}{section}

\usepackage{latexsym}

\theoremstyle{theorem} 

\newtheorem{theorem}{Theorem}[section]
\newtheorem{claim}[theorem]{Claim}
\newtheorem{proposition}[theorem]{Proposition}
\newtheorem{lemma}[theorem]{Lemma}

\newtheorem{corollary}[theorem]{Corollary}

\theoremstyle{definition}
\newtheorem{definition}[theorem]{Definition}
\newtheorem{remark}[theorem]{Remark}
\newtheorem{example}[theorem]{Example}

\newcommand{\zar}{\operatorname{Zar}}
\newcommand{\et}{\operatorname{\acute{e}t}}
\newcommand{\liset}{\operatorname{lis-\acute{e}t}}
\newcommand{\spec}{\operatorname{Spec}}

\newcommand{\we}{\widehat{\epsilon}}
\newcommand{\hd}{\widehat{\delta}}
\newcommand{\xhzar}{\widehat{X}_{\zar}}
\newcommand{\qc}{\operatorname{qc}}
\newcommand{\qcoh}{\operatorname{qcoh}}
\newcommand{\Hom}{\operatorname{Hom}}
\newcommand{\coh}{\operatorname{coh}}
\newcommand{\dm}{D(\operatorname{Mod}(\mathscr{O}_X))}
\newcommand{\dpm}{D(\operatorname{PMod}(\mathscr{O}_X))}
\newcommand{\fgt}{\operatorname{Fgt}}
\newcommand{\uhom}{\underline{\operatorname{Hom}}}
\newcommand{\uext}{\underline{\operatorname{Ext}}}

\title{Grothendieck's Existence Theorem for Relatively Perfect Complexes on Algebraic Stacks}
\author{David Benjamin Lim}

\address{Stanford University, Department of Mathematics, 450 Serra Mall
CA 94305, USA}
\email{benjamin.colim@gmail.com}
\date{\today}

\begin{document}
\maketitle

\begin{abstract}
We prove Grothendieck's existence theorem for relatively perfect complexes on an algebraic stack that is proper and flat over an $I$-adically complete Noetherian ring $A$. This generalizes an earlier result of Lieblich in the setting of algebraic spaces.
\end{abstract}

\setcounter{tocdepth}{1}
\tableofcontents

\section{Introduction}

Recently, pioneering work of Bridgeland \cite{ref:bridgeland} has shown that it is possible to define a notion of stability for objects in any triangulated category, vastly generalising the notion of stability of vector bundles as considered in GIT.
However, unlike moduli of vector bundles in GIT, it is a subtler question to construct  a moduli stack of objects in $D^b(\operatorname{Coh}(X))$, the bounded derived category of coherent sheaves on a scheme $X$. At the very least,  such a stack should be  \textit{algebraic} in order to ask geometric questions.
\medskip

  In \cite{ref:artinvers}, building on the seminal papers \cite{ref:artin1} and \cite{ref:artin2}, Artin gave a list of sufficient axioms for a stack to be algebraic. One of these axioms is that formal deformations of objects can be algebraized, i.e., that Grothendieck's existence theorem is satisfied. Therefore, in order to construct an algebraic stack parametrizing objects in $D^b(\operatorname{Coh}(X))$, we must at least be able to prove Grothendieck's existence theorem for such objects. The first result of this nature is that of Lieblich \cite[Proposition 3.6.1]{ref:lieblich}. He proves that if one restricts to relatively perfect complexes  (see Definition \ref{def:rp}) on an algebraic space $X$, proper and flat over a complete local Noetherian ring, then Grothendieck's existence theorem holds.  In fact, he checks the rest of Artin's axioms and proves that if $X \to S$ is a proper flat morphism of algebraic spaces,  the moduli space of relatively perfect and universally gluable complexes $\mathscr{D}_{X/S}$ is represented by an algebraic stack, locally of finite type over $S$ \cite[Theorem 4.2.1]{ref:lieblich}.
\medskip

The goal of this article is to establish an important first step towards constructing a moduli of relatively perfect objects on an \textit{algebraic stack}. More precisely, we  extend Lieblich's result on Grothendieck's existence theorem for relatively perfect complexes to the setting of algebraic stacks. This will be done by way of two more general results, namely formal GAGA and Grothendieck's existence theorem for pseudo-coherent complexes (c.f. Definition  \ref{def:pc} and Remark \ref{rem:pc}).

\begin{theorem}[Formal GAGA for pseudo-coherent complexes]
Let $\mathscr{X}$ be an algebraic stack that is proper over an $I$-adically complete Noetherian ring $A$. Then there is a natural equivalence of triangulated categories
\[ D^-_{\coh}(\mathscr{X}) \stackrel{\sim}{\to} D^-_{\coh}(\widehat{\hspace{0ex} \mathscr{X}})\]
of pseudo-coherent complexes.
\end{theorem}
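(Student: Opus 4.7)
The plan is to show that the natural completion functor $\widehat{(-)}\colon D^-_{\coh}(\mathscr{X}) \to D^-_{\coh}(\widehat{\mathscr{X}})$ is both fully faithful and essentially surjective. The essential input is the classical formal GAGA for coherent sheaves on a proper algebraic stack over $A$, an Olsson-Conrad type generalization of Grothendieck's theorem. Since $I$-adic completion is flat on coherent modules in the Noetherian setting, $\widehat{(-)}$ is $t$-exact, respects the standard truncation triangles, and restricts on hearts to the classical coherent equivalence. The bulk of the work is to propagate this equivalence from the heart to the derived category.

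\textbf{Fully faithfulness.} For $F, G \in D^-_{\coh}(\mathscr{X})$, the goal is to show
\[
R\Hom_\mathscr{X}(F, G) \to R\Hom_{\widehat{\mathscr{X}}}(\widehat{F}, \widehat{G})
\]
is a quasi-isomorphism. By a d\'evissage using truncation triangles in both variables and the long exact sequences of Ext, it suffices to handle the case where $F$ and $G$ are single coherent sheaves. There the local-to-global spectral sequence
\[
E_2^{p,q} = \h^p\bigl(\mathscr{X},\, \uext^q(F,G)\bigr) \Longrightarrow \operatorname{Ext}^{p+q}_{\mathscr{X}}(F, G)
\]
matches its counterpart over $\widehat{\mathscr{X}}$ termwise: flat base change gives $\widehat{\uext^q(F,G)} \cong \uext^q(\widehat{F}, \widehat{G})$, these sheaves are coherent, and formal GAGA for coherent sheaves identifies their cohomologies. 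Care is needed to handle the unbounded-below nature of $F$ when truncating; this can be addressed by approximating $F$ by its brutal truncations and verifying that the resulting error terms contribute only outside any fixed window of degrees.

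\textbf{Essential surjectivity and main obstacle.} Given $\mathcal F \in D^-_{\coh}(\widehat{\mathscr{X}})$ with cohomology vanishing above degree $N$, I would climb its Postnikov tower. Algebraize $\h^N(\mathcal F)$ to $F_{-N}$ on $\mathscr{X}$ via formal GAGA for coherent sheaves. Inductively, suppose $F_{-n+1}$ algebraizes $\tau^{\geq -n+1}\mathcal F$ and $H_{-n}$ algebraizes $\h^{-n}(\mathcal F)$; the boundary map $\tau^{\geq -n+1}\mathcal F \to \h^{-n}(\mathcal F)[n+1]$ lifts, via the fully faithfulness just established, to a unique map $F_{-n+1} \to H_{-n}[n+1]$ in $D^-_{\coh}(\mathscr{X})$, and I define $F_{-n}$ to be its fiber. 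Finally, set $F := R\lim_n F_{-n}$. The main obstacle is verifying that $F$ actually lies in $D^-_{\coh}(\mathscr{X})$ and that completion commutes with this limit to recover $\mathcal F$. The transition maps become isomorphisms on $\h^i$ once $n \geq -i$, so the tower is essentially constant in each cohomological degree; nevertheless, controlling this homotopy limit and interchanging it with completion is where properness of $\mathscr{X}/A$, through finite generation of coherent cohomology on proper stacks and a Mittag-Leffler condition on the system, enters decisively.
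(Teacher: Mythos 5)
Your overall strategy---reduce to the heart via truncation triangles and invoke formal GAGA for coherent sheaves and their Exts---matches the paper's. The fully faithfulness argument in the bounded case is essentially what the paper does (conditions (i) and (ii) of its Theorem \ref{thm:genpc}, supplied by Theorem \ref{thm:conradgaga} and \cite[Lemma 4.3]{ref:conrad}). Where you diverge is in essential surjectivity: you build the preimage of $\mathcal{F}$ constructively as $R\lim F_{-n}$ over a Postnikov tower, while the paper works with the adjunction $\iota^\ast \dashv R\iota_\ast$ and shows directly that the unit and counit are isomorphisms on $D^-_{\coh}$. These in fact produce the same inverse functor ($F_{-n} \simeq R\iota_\ast \tau^{\geq -n}\mathcal{F}$, and $R\iota_\ast$ commutes with $R\lim$), so the divergence is one of presentation.

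The genuine issue, though, is that your proposal stops exactly at the step where the real work begins. Both the ``brutal truncation'' approximation in the fully faithfulness argument and the ``controlling this homotopy limit and interchanging it with completion'' step in essential surjectivity require showing that, for each $n$, $H^n$ of a derived pushforward or derived limit along the truncation tower is computed by a \emph{bounded} truncation, and this in turn needs an $R^1\lim$-vanishing argument via Milnor sequences at the level of sections on a suitable prebase (affines smooth over $\mathscr{X}$). That is precisely the content of the paper's Lemma \ref{lem:LO} (a refinement of \cite[Lemma 2.1.10]{ref:LO}), which depends on the cohomology-vanishing hypothesis of Lemma \ref{lem:vanishinget} and the Mittag-Leffler Lemma \ref{lem:abelianlim}. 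Without this, the identity $\mathcal{F} \simeq R\lim \tau^{\geq -n}\mathcal{F}$ on the lisse-\'etale site (which itself requires the Stacks Project argument \cite[Tag 0D6P]{ref:stacks}, not automatic on a general ringed site) and the coherence of $H^i(R\lim F_{-n})$ are not justified. Also note that the finite generation of $R\Gamma$ over proper stacks, which you invoke, is not what is needed here; the relevant vanishing is $H^p(V,G)=0$ for $p>0$ and $V$ affine in the prebase, i.e., condition (\ref{vanishing}) in the paper's Theorem \ref{thm:genpc}, and properness is not used at that step.
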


\begin{theorem}[Grothendieck existence for pseudo-coherent complexes]
Let $\mathscr{X}$ be an algebraic stack that is proper over an $I$-adically complete Noetherian ring $A$. Define $\mathscr{X}_n \coloneqq \mathscr{X} \times_A A/I^{n+1}$.  Then any adic system of pseudo-coherent complexes on $\mathscr{X}_n$ algebraizes to a pseudo-coherent complex on $\mathscr{X}$.
\end{theorem}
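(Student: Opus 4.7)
The plan is to deduce this from Theorem 1.1 (formal GAGA) together with an independent identification of $D^-_{\coh}(\widehat{\mathscr{X}})$ with the category of adic systems of pseudo-coherent complexes on the thickenings $\mathscr{X}_n$. Granting such an identification, the given adic system $(F_n)$ first lifts to an object $\widehat{F} \in D^-_{\coh}(\widehat{\mathscr{X}})$, and then Theorem 1.1 produces the desired algebraization $F \in D^-_{\coh}(\mathscr{X})$.

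The first step is to construct the comparison functor
$$\Phi\colon D^-_{\coh}(\widehat{\mathscr{X}}) \longrightarrow \varprojlim_n D^-_{\coh}(\mathscr{X}_n)$$
sending $\widehat{F}$ to the system $\widehat{F} \otimes^{\boldl}_{\mathscr{O}_{\widehat{\mathscr{X}}}} \mathscr{O}_{\mathscr{X}_n}$ with its canonical transition isomorphisms. I would then prove that $\Phi$ is an equivalence by descent along a smooth atlas $p\colon U \to \mathscr{X}$ by a scheme (and its \v{C}ech/simplicial nerve for the general stacky case). Since formation of the $I$-adic completion commutes with smooth base change, this reduces the question to the affine case: for a Noetherian $I$-adically complete ring $R$, show that the natural functor from pseudo-coherent complexes on $\mathrm{Spf}(R)$ to compatible systems on $R/I^{n+1}$ is an equivalence, with inverse $R\varprojlim i_{n,*}(-)$. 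Pseudo-coherence of the derived limit is checked using Mittag-Leffler for the coherent cohomology sheaves of the tower, together with uniform boundedness above of the amplitudes. Fully faithfulness follows by expressing $R\Hom(\widehat{F}, \widehat{G})$ as a derived limit of $R\Hom(F_n, G_n)$ and invoking Mittag-Leffler once more. Smooth descent then reassembles these local equivalences into the global statement.

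The main obstacle will be controlling pseudo-coherence of the derived limit on a \emph{stack}, where there is no single ambient ring. Concretely, one has to (i) organize $R\varprojlim$ compatibly with the lisse-\'etale (or Zariski, depending on the convention adopted for $\widehat{\mathscr{X}}$) site on the formal stack; (ii) verify that the transition maps in each cohomology sheaf satisfy Mittag-Leffler with respect to a coherent atlas; and (iii) track bounded-above amplitudes uniformly across the tower so that $R\varprojlim$ preserves pseudo-coherence. Handling the 2-categorical compatibilities between pullback, completion, and derived limit along the smooth cover is the delicate technical point; once these are in place, the equivalence $\Phi$ drops out, and the theorem becomes a formal consequence of Theorem 1.1.
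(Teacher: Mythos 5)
Your core idea (take $R\lim$ of the adic system and then apply formal GAGA) is exactly the paper's strategy, but there is a genuine conceptual gap in how you frame step one. You propose to construct a comparison functor
$$\Phi\colon D^-_{\coh}(\widehat{\mathscr{X}}) \longrightarrow \varprojlim_n D^-_{\coh}(\mathscr{X}_n)$$
and to prove it is an \emph{equivalence}, with full faithfulness verified via $R\Hom$ and Mittag--Leffler. This is precisely the move the paper flags as problematic in its opening remark: it is unclear how to even define the target $\varprojlim_n D^-_{\coh}(\mathscr{X}_n)$ as a triangulated category without passing to $\infty$-categories, and if one naively sets $\Hom(\{P_n\},\{Q_n\}) := \lim \Hom(P_n,Q_n)$ then the would-be comparison map $\Hom(P,Q)\to\lim\Hom(P_n,Q_n)$ has kernel $R^1\lim\Hom(P_n,Q_n[-1])$, which need not vanish. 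So the full faithfulness half of your $\Phi$ is not merely hard to verify --- it is false for the most natural candidate definition of the target. The theorem only asserts \emph{essential surjectivity} of the reduction functor (any adic system lifts), and the paper deliberately confines itself to that: it constructs $Q := R\lim Q_k$, shows it lies in $D^-_{\coh}(\widehat{\mathscr{X}})$, shows it reduces to the given $Q_n$'s, and then applies formal GAGA. No claim about full faithfulness at the level of the formal thickenings is made or needed.

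A second, smaller issue concerns your ``smooth descent'' step. The lisse-\'etale site of an algebraic stack is not functorial under infinitesimal thickenings, so the pullback $L(j_{n,\qc})^\ast$ to $\mathscr{X}_n$ and the reduction functor $D^-_{\coh}(\widehat{\mathscr{X}})\to D^-_{\coh}(\mathscr{X}_n)$ both require careful construction (the paper does this via Hall--Rydh's $L(j_{\qc})^\ast$ and the factorization through $D^-_{\coh}(\mathscr{X}, j_{n,\ast}\mathscr{O}_{\mathscr{X}_n})$, together with comparison results between the Zariski, \'etale, and lisse-\'etale topologies in the Appendix). Your plan treats the descent to an atlas as routine, but the 2-categorical compatibilities you acknowledge as ``the delicate technical point'' are where a large part of the paper's real work lies, so they cannot be deferred. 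Finally, for the pseudo-coherence of $R\lim Q_k$, the paper does not run Mittag--Leffler directly on cohomology sheaves; it instead uses Stacks Project Tag 0BCB to inductively replace the derived-category system by an adic system of \emph{genuine} bounded-above complexes of finite free modules, so that $R\lim$ becomes a termwise $\lim$ of an adic system of finite free modules, which is manifestly coherent on the formal spectrum. Your Mittag--Leffler-on-cohomology route could probably be made to work affine-locally, but it would be messier and you would still need to control both $\lim$ and $R^1\lim$ in each degree.
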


\noindent Using these results, we can prove formal GAGA and Grothendieck's existence theorem for perfect complexes on algebraic stacks:
\begin{theorem}[Formal GAGA for perfect complexes]
Let $\mathscr{X}$ be an algebraic stack that is proper over an $I$-adically complete Noetherian ring $A$. Then there is a natural equivalence of triangulated categories
\[ \operatorname{Perf}(\mathscr{X}) \stackrel{\sim}{\to} \operatorname{Perf}(\widehat{\hspace{0ex} \mathscr{X}}) \]
of perfect complexes.
\end{theorem}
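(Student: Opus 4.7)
The plan is to show that the equivalence of Theorem~1.1 restricts to an equivalence on the full subcategories of perfect complexes. The completion functor $F \mapsto \widehat{F}$ sends $\operatorname{Perf}(\mathscr{X})$ into $\operatorname{Perf}(\widehat{\mathscr{X}})$ because, locally on a smooth atlas, a perfect complex is represented by a bounded complex of finite locally free sheaves, a description preserved by $I$-adic completion. Fully faithfulness is then immediate, since $\operatorname{Perf}$ is a full subcategory of $D^-_{\coh}$ on each side and Theorem~1.1 is fully faithful there. The real content is essential surjectivity: given $G \in \operatorname{Perf}(\widehat{\mathscr{X}})$, Theorem~1.1 produces $F \in D^-_{\coh}(\mathscr{X})$ with $\widehat{F} \simeq G$, and I need to show that this $F$ is perfect.

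Boundedness of $F$ is immediate from the $H^0$-case of Theorem~1.1 (classical formal GAGA for coherent sheaves): the identifications $\widehat{H^i(F)} \simeq H^i(\widehat{F}) \simeq H^i(G)$, combined with the equivalence $\operatorname{Coh}(\mathscr{X}) \simeq \operatorname{Coh}(\widehat{\mathscr{X}})$, force $H^i(F) = 0$ wherever $G$ has no cohomology, so $F \in D^b_{\coh}(\mathscr{X})$. For finite Tor-amplitude, write $\iota\colon \mathscr{X}_0 \hookrightarrow \mathscr{X}$ for the closed immersion; I would exploit the identifications
\[ \iota^* F \simeq \iota^* \widehat{F} \simeq \iota^* G, \]
the last of which is perfect on $\mathscr{X}_0$ because $G$ is. Hence there is a uniform interval $[a,b]$ such that $F \otimes^L k(\xi)$ is concentrated in $[a,b]$ for every point $\xi$ of $\mathscr{X}_0$.

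To propagate this bound from $\mathscr{X}_0$ to all of $\mathscr{X}$: since $A$ is $I$-adically complete, $I$ lies in the Jacobson radical of $A$, so every closed point of $\spec A$ lies in $V(I)$; by properness of $\mathscr{X} \to \spec A$, every closed point of $\mathscr{X}$ maps to a closed point of $\spec A$ and therefore lies in $\mathscr{X}_0$. Upper semicontinuity of Tor-amplitude for pseudo-coherent complexes then implies that the open locus where $F$ has Tor-amplitude in $[a,b]$ contains every closed point of the Noetherian stack $\mathscr{X}$; its complement is a closed subset of $|\mathscr{X}|$ containing no closed point, and must therefore be empty. Thus $F$ has Tor-amplitude in $[a,b]$ globally and is perfect, completing essential surjectivity. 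I expect this last step --- propagating the bound from closed points to all points in the stack setting --- to be the most delicate, as it requires formulating upper semicontinuity of Tor-amplitude on an algebraic stack, which one typically handles by descending along a smooth atlas to the scheme case.
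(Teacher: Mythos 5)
Your argument is correct and follows the same broad strategy as the paper's proof of Theorem~\ref{thm:gagaperf}: fully faithfulness is formal, and for essential surjectivity one shows the algebraized pseudo-coherent $F$ is perfect by detecting finite Tor-dimension at closed points of $\mathscr{X}$ --- all of which lie in $\mathscr{X}_0$ since $I$ lies in the Jacobson radical of the complete ring $A$ --- and then propagating via openness of the perfect locus together with properness. The commutative-algebra fact your ``upper semicontinuity'' step rests on, that a pseudo-coherent complex over a Zariski pair $(R,I)$ whose derived fiber $P \otimes^{\operatorname{L}}_R R/I$ has Tor-amplitude in $[a,b]$ is itself of Tor-amplitude in $[a,b]$, is exactly the paper's Corollary~\ref{cor:zarpair} (proved via Proposition~\ref{lem:perfres} by a d\'evissage through $\mathfrak{m}$-adic completion). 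Where the executions diverge: the paper deliberately avoids formulating upper semicontinuity on the lisse-\'etale site of a stack, and instead fixes a smooth affine atlas $\spec B \to \mathscr{X}$, localizes at maximal ideals $\mathfrak{m} \supseteq IB$, proves an \emph{affine} formal GAGA for perfect complexes (Theorem~\ref{thm:weakformalgaga}, Corollary~\ref{prop:corweakformalgaga}), and uses the topology-comparison results of the appendix (Propositions~\ref{prop:pctopapp} and~\ref{prop:tordimapp}) to pass from lisse-\'etale to Zariski before invoking Corollary~\ref{cor:zarpair} and faithfully flat descent from $\widehat{B_\mathfrak{m}}$. Your appeal to ``upper semicontinuity of Tor-amplitude on an algebraic stack'' conceals precisely that descent, which you rightly flag as delicate --- so the two proofs end up spending comparable effort, packaged differently. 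Your closing topological step is also a minor variant: you observe the non-perfect locus has no closed points of $\mathscr{X}$, hence is empty, while the paper argues its image in $\spec A$ is a non-empty closed set disjoint from $V(I)$, contradicting the adic hypothesis; both are legitimate uses of properness.
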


\begin{theorem}[Grothendieck existence for perfect complexes]
Let $\mathscr{X}$ be an algebraic stack that is proper  over an $I$-adically complete Noetherian ring $A$. Define $\mathscr{X}_n \coloneqq \mathscr{X} \times_A A/I^{n+1}$.  Then any adic system of perfect complexes on $\mathscr{X}_n$ algebraizes to a perfect complex on $\mathscr{X}$.
\end{theorem}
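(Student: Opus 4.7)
\medskip

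\noindent\textbf{Proof proposal.} The plan is to bootstrap from the pseudo-coherent case (Theorem 1.2) to the perfect case using the equivalence of Theorem 1.3. Let $\{F_n\}$ be the given adic system of perfect complexes on the thickenings $\mathscr{X}_n$. Since every perfect complex is pseudo-coherent, $\{F_n\}$ is \emph{a fortiori} an adic system of pseudo-coherent complexes, so by Theorem 1.2 it algebraizes to some $F \in D^-_{\coh}(\mathscr{X})$; that is, writing $i_n \colon \mathscr{X}_n \hookrightarrow \mathscr{X}$ for the canonical closed immersion, we obtain compatible isomorphisms $\boldl i_n^* F \simeq F_n$.

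It remains to argue that $F$ is actually perfect. To do so, I would transport the adic system $\{F_n\}$ into an object of $\operatorname{Perf}(\widehat{\mathscr{X}})$, using the identification of the latter with compatible adic systems of perfect complexes on the thickenings that is implicit in Theorem 1.3. Invoking the essential surjectivity of the equivalence in Theorem 1.3 then produces a perfect complex $G \in \operatorname{Perf}(\mathscr{X})$ whose completion $\widehat{G}$ is isomorphic to $\{F_n\}$, and therefore to $\widehat{F}$, inside $D^-_{\coh}(\widehat{\mathscr{X}})$. Since the completion functor of Theorem 1.1 is fully faithful on $D^-_{\coh}(\mathscr{X})$, this isomorphism lifts uniquely to an isomorphism $G \simeq F$ in $D^-_{\coh}(\mathscr{X})$. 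Consequently $F$ is perfect, and the compatible trivializations $\boldl i_n^* F \simeq F_n$ from the first paragraph exhibit $F$ as the desired algebraization.

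The main obstacle I anticipate is bookkeeping rather than conceptual: one must carefully verify the compatibility of the two completion functors (the pseudo-coherent one of Theorem 1.1 and the perfect one of Theorem 1.3) with the inclusion $\operatorname{Perf} \hookrightarrow D^-_{\coh}$, and check that the notion of ``adic system of perfect complexes'' appearing in the statement of Theorem 1.4 really matches the objects of $\operatorname{Perf}(\widehat{\mathscr{X}})$ that appear in Theorem 1.3. Once these identifications are pinned down, Theorem 1.4 becomes a formal consequence of Theorems 1.2 and 1.3 and requires no new geometric input.
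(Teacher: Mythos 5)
Your opening and closing steps are fine: algebraizing the pseudo-coherent system to $F \in D^-_{\coh}(\mathscr{X})$ via Theorem~1.2, and deducing $F \simeq G$ from full faithfulness of $\iota^\ast$, are both sound. The gap is in the middle. You claim that $\{F_n\}$ can be transported into an object of $\operatorname{Perf}(\widehat{\mathscr{X}})$ ``using the identification of the latter with compatible adic systems of perfect complexes on the thickenings that is implicit in Theorem~1.3.'' There is no such identification implicit in Theorem~1.3: that theorem only asserts $\operatorname{Perf}(\mathscr{X}) \simeq \operatorname{Perf}(\widehat{\mathscr{X}})$ and says nothing about adic systems on the $\mathscr{X}_n$. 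Indeed the paper contains a remark explaining precisely why $D^-_{\coh}(\widehat{\mathscr{X}})$ is \emph{not} naively an inverse limit of the categories $D^-_{\coh}(\mathscr{X}_n)$ (the $R^1\lim$ of Hom groups is an obstruction). Passing from an adic system on the $\mathscr{X}_n$ to a single object on $\widehat{\mathscr{X}}$ is the content of Theorems~1.2 and~1.4, not something one can read off Theorem~1.3; your middle step quietly assumes the very thing that needs proving.

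Concretely, what is missing is this: given that $\iota^\ast F \simeq R\lim Q_k$ is pseudo-coherent on $\widehat{\mathscr{X}}$ (from the proof of Theorem~1.2) and that its reduction to $\mathscr{X}_0$ is perfect (it equals $Q_0$), why is $\iota^\ast F$ itself perfect? This is not formal. The paper handles it by localizing to an affine chart $\spec B$, using the affine formal GAGA equivalence $D^-_{\coh}(\spec \widehat{B}) \simeq D^-_{\coh}(\operatorname{Spf} \widehat{B})$ to replace the derived limit by a pseudo-coherent object $Q'$ on $\spec\widehat{B}$ whose reduction mod $\widehat{J}$ is perfect, and then invoking Corollary~6.7 (built on Proposition~6.6): for a Zariski pair $(R,I)$, a pseudo-coherent object of $D(R)$ whose derived reduction mod $I$ has bounded Tor amplitude already has bounded Tor amplitude, hence is perfect. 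That corollary is where the adic-completeness hypothesis is actually spent, and it is exactly the ingredient your sketch omits. (The paper also does not detour through Theorem~1.2's algebraization before checking perfectness; it shows $R\lim Q_k$ is perfect directly and applies Theorem~1.3 at the end, but that is a cosmetic reorganization. The substantive gap in your proposal is the missing ``perfectness is detected by reduction'' input.)
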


\noindent Finally, using Grothendieck's existence theorem for pseudo-coherent complexes, we deduce a generalization of \cite[Proposition 3.6.1]{ref:lieblich} to the setting of algebraic stacks. Note that unlike the case of pseudo-coherent or perfect complexes, we need a flatness hypothesis here.
\begin{theorem}[Grothendieck existence for relatively perfect complexes]
Let $\mathscr{X}$ be an algebraic stack that is proper  and flat over an $I$-adically complete Noetherian ring $A$. Define $\mathscr{X}_n \coloneqq \mathscr{X} \times_A A/I^{n+1}$.  Then any adic system of relatively perfect complexes on $\mathscr{X}_n$ algebraizes to a relatively perfect complex on $\mathscr{X}$.
\end{theorem}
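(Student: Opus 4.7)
\medskip

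The plan is to reduce to Grothendieck's existence theorem for pseudo-coherent complexes (Theorem~1.2) and then upgrade pseudo-coherence to relative perfection using completeness of $A$. First, since every relatively perfect complex is in particular pseudo-coherent, the adic system $\{E_n\}$ gives rise to an adic system of pseudo-coherent complexes on $\{\mathscr{X}_n\}$. Applying Theorem~1.2 produces a pseudo-coherent complex $E \in D^-_{\coh}(\mathscr{X})$ with $Li_n^* E \simeq E_n$ compatibly, where $i_n \colon \mathscr{X}_n \hookrightarrow \mathscr{X}$ are the natural closed immersions. It remains only to show that $E$ is relatively perfect, i.e.\ has finite Tor-amplitude over $A$.

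\medskip

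Since relative perfection is local for the smooth topology on $\mathscr{X}$ and $\mathscr{X}$ is quasi-compact (being proper over $A$), we may pass to a finite smooth cover by affines and work with a pseudo-coherent complex $E$ over a Noetherian ring $B$ flat over $A$. The hypothesis that $E_0$ is relatively perfect over $A/I$ gives an interval $[a,b]$ with $E_0$ of Tor-amplitude in $[a,b]$ over $A/I$. Because $\mathscr{X}$ is $A$-flat, $E_0 \simeq E \otimes^{L}_A A/I$, and for every prime $\mathfrak{p}$ containing $I$ the derived fiber satisfies
\[
E \otimes^{L}_A \kappa(\mathfrak{p}) \;\simeq\; E_0 \otimes^{L}_{A/I} \kappa(\mathfrak{p}),
\]
which has cohomology concentrated in $[a,b]$. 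The plan is to show that this forces $E$ to have Tor-amplitude in $[a,b]$ over $A$ at \emph{every} prime, hence globally.

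\medskip

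To do this, one invokes the standard upper semi-continuity theorem: for a pseudo-coherent complex on a scheme of finite type over a Noetherian base, the locus of base points $\mathfrak{p}$ at which the complex has Tor-amplitude in $[a,b]$ is open in $\spec A$. Call this open locus $U$. By the paragraph above, $V(I) \subseteq U$. Now use that $A$ is $I$-adically complete, so $I$ lies in the Jacobson radical: if $V(J)$ is any nonempty closed subset disjoint from $V(I)$, then $J + I = A$, whence $J = A$, a contradiction. Therefore any open in $\spec A$ containing $V(I)$ equals $\spec A$, so $U = \spec A$ and $E$ has Tor-amplitude in $[a,b]$ over $A$. Combining this with pseudo-coherence and the standard equivalence between ``pseudo-coherent of finite Tor-amplitude'' and ``relatively perfect'' (for morphisms locally of finite presentation to a Noetherian base) concludes the proof.

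\medskip

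The main obstacle is the second step: once pseudo-coherent algebraization is granted by Theorem~1.2, everything hinges on propagating the Tor-amplitude bound from $V(I)$ to all of $\spec A$. The key inputs are (i) the openness of the locus of given Tor-amplitude for pseudo-coherent complexes on a proper flat stack, and (ii) the Jacobson-radical trick that turns completeness into the topological assertion ``open $\supseteq V(I)$ implies open $=\spec A$.'' Both ingredients are standard but must be stated carefully in the stacky setting; fortunately, relative perfection and Tor-amplitude are smooth-local, so reduction to the affine scheme case over a Noetherian ring is immediate.
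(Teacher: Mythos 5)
Your proof is correct in its overall architecture and matches the paper's strategy at a high level: algebraize via the pseudo-coherent existence theorem, use flatness of $\mathscr{X}/A$ to identify $E_0 \simeq E\otimes^{\operatorname{L}}_A A/I$, reduce to the affine statement that a pseudo-coherent complex whose special fiber has Tor-amplitude in $[a,b]$ has the same amplitude globally, and exploit the fact that $I$-adic completeness puts $I$ in the Jacobson radical. But your mechanism at the key step is different from the paper's, and it is the weaker of the two. The paper does \emph{not} invoke any openness/semi-continuity result: it localizes at an arbitrary maximal ideal $\mathfrak{m}\supseteq I$, passes to the $\mathfrak{m}$-adic completion, proves a local statement by a Milnor-sequence d\'evissage over complete local rings (Proposition~\ref{lem:perfres}), and descends by faithful flatness to conclude $P_\mathfrak{m}$ has Tor-amplitude in $[a,b]$; since this holds at every maximal ideal, global Tor-amplitude follows (Corollary~\ref{cor:zarpair}). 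By contrast, you invoke, as ``standard,'' the openness of the locus of primes whose derived fiber is concentrated in $[a,b]$. For a pseudo-coherent complex that is not yet known to be perfect, the lower cohomological bound on derived fibers involves infinitely many vanishing conditions, and the openness of their intersection is not trivial; you neither prove it nor give a precise reference, and proving it would essentially recapitulate the paper's Corollary~\ref{cor:zarpair}. You could sidestep semi-continuity entirely: since $I$ lies in the Jacobson radical, \emph{every} maximal ideal of $A$ lies in $V(I)$, so your hypothesis already bounds the fibers at all closed points, and Tor-amplitude over $A$ is detected at maximal ideals (via the local Nakayama/minimal-complex criterion). One further point you leave implicit: to apply the fiber-wise criterion \cite[\href{https://stacks.math.columbia.edu/tag/0655}{Tag 0655}]{ref:stacks} over $A$, you must first observe that $E$ is pseudo-coherent as an object of $D(A)$ (not merely $D(B)$); this follows because $A\to B$ is of finite type between Noetherian rings, hence a pseudo-coherent ring map, exactly as the paper notes. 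With those adjustments your proof is sound, but as written it trades a self-contained d\'evissage for an unjustified semi-continuity claim.
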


\begin{remark}The reader may wonder if the triangulated category $D^-_{\coh}(\widehat{\hspace{0ex} \mathscr{X}})$ can be described as some kind of inverse limit of triangulated categories $\lim D^-_{\coh}(\mathscr{X}_n)$. At present, we do not know if there is a appropriate definition for such an inverse limit that does not use the language of $\infty$-categories. One issue is the definition of what morphisms should be in this category. For example,  given two systems $\{P_n\}$ and $\{Q_n\}$, defining  $\operatorname{Hom}(\{P_n\} , \{Q_n\} ) \coloneqq \lim \operatorname{Hom}(P_n,Q_n)$ is \textit{not} the right definition. This is because if we suppose that $P_n, Q_n$ are the reductions of $P,Q$ to $\mathscr{X}_n$, the natural surjective map
\[ \operatorname{Hom}(P,Q) \to \lim \operatorname{Hom}(P_n, Q_n) \]
has kernel $R^1\lim \operatorname{Hom}(P_n, Q_n[-1])$ which may not vanish. Since it is not necessary to deal with inverse limits of triangulated categories to state our theorems,  we will avoid the issue completely. \end{remark}

\medskip

\subsection{Comparison of Lieblich's result with ours} 

The classical form of Grothendieck's existence theorem asserts that given a scheme $X$ proper over an $I$-adically complete Noetherian ring $A$, any adic system of coherent sheaves $F_n$ on $X_n \coloneqq X \times_A A/I^{n+1}$ can be algebraized to a coherent sheaf on $X$. The theorem has been extended to the case of algebraic spaces by \cite[Theorem 6.3]{ref:knutson} and most recently by \cite[Theorem 1.4]{ref:olsson} and  \cite[Theorem 4.1]{ref:conrad} for algebraic stacks. In all these cases, the proofs rely on Chow's Lemma  and standard d\'{e}vissage techniques.
\medskip

Lieblich's proof of  \cite[Proposition 3.6.1]{ref:lieblich} ultimately reduces to the case of coherent sheaves on algebraic spaces \cite[Theorem 6.3]{ref:knutson}. Therefore, given the results of \cite[Theorem 1.4]{ref:olsson} and \cite[Theorem 4.1]{ref:conrad}, it is reasonable to ask if the proof of \cite[Proposition 3.6.1]{ref:lieblich} generalizes verbatim to the case of algebraic stacks. We now give a rough idea for why this is not the case. Suppose that $X_A$ is an algebraic space, proper and flat over an Artinian local ring $A$. Let $A \to A_0$ be a square-zero thickening, $E_0$  an object of $D^b(\operatorname{Coh}(X_{A_0}))$ that is relatively perfect over $A_0$, and $E \in D^b(\operatorname{Coh}(X_A))$ such that $E \otimes_A^{\operatorname{L}} A_0 \stackrel{\sim}{\to} E_0$. Lieblich constructs a tangent-obstruction theory for deformations of objects in the derived category \cite[Theorem 3.1.1]{ref:lieblich}, crucially relying on the fact that the (small) \'{e}tale site is invariant under infinitesimal thickenings. Lieblich uses this to show that $E_0$ is quasi-isomorphic to a bounded above complex $J_0^{\bullet}$ that lifts to a complex $J^{\bullet}$ satisfying $J^{\bullet} \otimes_A A_0 = J^{\bullet}_0$. 
\medskip

The upshot of this procedure is that if $X$ is an algebraic space over a now complete Noetherian local ring $A$, any adic system of relatively perfect objects $\{E_n\} \in D^b(\operatorname{Coh}(X_n))$ can be replaced by an adic system of \textit{complexes} $\{J_n^{\bullet}\}$. In other words, the transition maps $J_n^{\bullet} \to J_{n-1}^{\bullet}$ are maps of complexes such that $J_n^{\bullet} \otimes_{A_n} A_{n-1} = J_{n-1}^{\bullet}$. Lieblich then completes the proof by algebraizing the \textit{inverse limit} $\lim J_i^{\bullet}$ to a relatively perfect complex on $X$ using \cite[Theorem 6.3]{ref:knutson}. However, if we now wish to work on an \textit{algebraic stack} $\mathscr{X}$, Lieblich's proof does not generalize for the following reason: The lisse-\'{e}tale site is not invariant under infinitesimal thickenings and therefore Lieblich's tangent-obstruction theory cannot be applied to replace an adic system of objects in the derived category with an adic system of complexes. It follows that any attempt to generalize Lieblich's result to algebraic stacks must either develop a deformation theory of complexes on the lisse-\'{e}tale site, or avoid any recourse to deformation theory completely.

 \medskip	

We will take the latter route in this article and show that it is not necessary to represent an adic system of relatively perfect objects in $ D^b(\operatorname{Coh}(\mathscr{X}_n)) $ by an adic system of relatively perfect complexes. Instead,  given an adic system of relatively perfect objects $P_n$, we will directly construct a pseudo-coherent object on $\widehat{\hspace{0ex} \mathscr{X}}$ (the \textit{derived limit} $R\lim P_n$) that recovers each $P_n$ at the finite level. This object will then be algebraized to a pseudo-coherent object $P$ on $\mathscr{X}$ by formal GAGA. Finally, we use the fact that $\mathscr{X} \to \spec A$ is flat, and the fact that $P_0$ is relatively perfect  over $\spec A/I$ to prove that the algebraized object $P$ is relatively perfect over $\spec A$.

\medskip 

The outline of this article is as follows. First, in Section \ref{sec:equivdc}  we provide a very general criterion for when a morphism of ringed sites $f \colon (Y,\mathscr{O}_Y) \to (X,\mathscr{O}_X)$ gives rise to an equivalence of triangulated categories $f^\ast \colon D^-_{\mathscr{A}_X}(X) \to D^-_{\mathscr{A}_Y}(Y)$ (see Section \ref{sec:equivdc} for notation). This will be done  using a slight variant of \cite[Lemma 2.1.10]{ref:LO}. We apply this criterion to prove formal GAGA for pseudo-coherent complexes in Section \ref{section:gagapc}. As a corollary of formal GAGA, we prove Grothendieck's existence theorem for pseudo-coherent complexes in Section \ref{section:existencepc}.  In Section \ref{section:existencep}, we apply the preceding results to prove formal GAGA and  Grothendieck's existence theorem for perfect complexes. Finally, in Section \ref{section:existencerp} we prove Grothendieck's existence theorem for relatively perfect complexes.

\subsection{Recent results in the literature}

 Hall \cite{ref:hall} has proven an existence theorem for pseudo-coherent complexes in a non-Noetherian setting. Unlike previous proofs of the existence theorem, Hall's proof does not rely on d\'{e}vissage to the projective situation. Instead, he proves a very general result \cite[Theorem 6.1]{ref:hall} that is sufficient to imply all existing GAGA results in the setting of $\mathbf{C}$-analytic spaces, rigid-analytic spaces, Berkovich spaces or formal algebraic spaces \cite[Examples 7.5,  7.7 and 7.8]{ref:hall}. At this point, we make the important remark that our work  is \textit{not} a generalization of \cite[Example 7.8]{ref:hall} to the setting of algebraic stacks  because we assume Noetherian hypotheses. Conversely, Hall's method does not generalize to algebraic stacks because a key hypothesis of \cite[Theorem 6.1]{ref:hall} fails in this setting, namely that $R\Gamma(\mathscr{X},-)$ preserves pseudo-coherence. Indeed, the algebraic stack $B\mathbf{Z}/p\mathbf{Z}$ is proper over $\spec \mathbf{F}_p$, but $R\Gamma(B\mathbf{Z}/p\mathbf{Z}, \mathscr{O}_{ B\mathbf{Z}/p\mathbf{Z}})$ has non-zero cohomology in all positive degrees. Therefore,  any generalization of Hall's method to prove non-Noetherian GAGA for stacks should at least include the hypothesis that $\mathscr{X}$ is tame, or admits a good moduli space in the sense of Alper \cite[Definition 4.1]{ref:alper}. In summary, Hall's results are essentially \textit{orthogonal} to ours.
\medskip

Finally we mention results of Halpern-Leistner--Preygel \cite{ref:hlp}. They have established various GAGA results for algebraic stacks in the context of derived algebraic geometry, with a focus on those with affine diagonal. Since the diagonals of the algebraic stacks considered in the present paper are proper, our results only overlap in the case of finite diagonals.

\section{Acknowledgements}

I am grateful to Max Lieblich for suggesting the problem to me, in addition to his generosity with his time. I am very grateful to my advisors Jarod Alper and Ravi Vakil for their guidance, time and many useful comments regarding the project. I am very indebted to Jack Hall for his technical expertise in formal algebraic geometry. This project benefited from numerous helpful discussions with Daniel Bragg, Brian Conrad, Daniel Dore, Alessandro M. Masullo, Siddharth Mathur, Zev Rosengarten, Jesse Silliman and Bogdan Zavyalov. Part of this work was carried out at the mathematics department of the University of Washington. We thank them for their hospitality.

\section{A Preliminary Result} \label{sec:equivdc} 

The goal of this section is to establish a very general result (Theorem \ref{thm:genpc})  that  we will use to deduce formal GAGA for pseudo-coherent complexes. Roughly, given a flat morphism of ringed sites $f \colon (Y,\mathscr{O}_Y)  \to (X,\mathscr{O}_X),$ we are interested in sufficient conditions for there to exist an equivalence of categories 
\[ f^\ast \colon D^-_{\mathscr{A}_X}(X) \stackrel{\sim}{\to} D^-_{\mathscr{A}_Y}(Y).\]
The notation $D^-_{\mathscr{A}_X}(X)$ refers to the following. Write $D(X)$ for the unbounded derived category of $\mathscr{O}_X$-modules, $D(\operatorname{Mod}(\mathscr{O}_X))$. Fix a weak Serre subcategory $\mathscr{A}_X \subseteq \operatorname{Mod}(\mathscr{O}_X)$.  Then $D^-_{\mathscr{A}_X}(X)$ is the full triangulated subcategory of $D(X)$ consisting of bounded above objects whose cohomology sheaves  lie in  $\mathscr{A}_X \subseteq \operatorname{Mod}(\mathscr{O}_X)$ (here we use the fact that $\mathscr{A}_X$ is a weak Serre subcategory in order that $D^-_{\mathscr{A}_X}(X)$ be a full triangulated subcategory of $D(X)$). Analogously, we can define $D^-_{\mathscr{A}_Y}(Y)$. Before we can state our result, we will need a rather technical lemma which is basically \cite[Lemma 2.1.10]{ref:LO}. We will include a proof of it here because the proof in \cite{ref:LO} relies on \cite[Lemma 0.11]{ref:spaltenstein} whose proof is light on details. In addition, our hypotheses are a little different than \cite[Lemma 2.1.10]{ref:LO}.

\begin{definition}
Let $X$ be a (Grothendieck) site. A \textit{prebasis} for the topology on $X$ is a subclass $\mathscr{B}_X \subseteq \operatorname{Ob}(X)$ such that every covering $\{V_i \to V\}$ can be refined to a covering by elements of $\mathscr{B}_X$.
\end{definition}

\begin{example}
If $X$ is an algebraic stack, we can take affines with a smooth morphism to $X$ as a prebasis for the lisse-\'{e}tale topology on $X$.
\end{example}

\begin{lemma} \label{lem:sheaf}
Let $F$ be a presheaf of abelian groups on a site $X$ with a prebasis $\mathscr{B}_X$. Let $F^{\#}$ denote the sheafification of $F$. Suppose for every $U \in \mathscr{B}_X$ that $F({U}) = 0$. Then $F^{\#} = 0$.
\end{lemma}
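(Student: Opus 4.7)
The plan is to reduce everything to the claim that $F^{\#}$ vanishes on the prebasis $\mathscr{B}_X$, and to obtain that vanishing from the Godement plus construction. Recall that sheafification can be realized as $F^{\#} = F^{++}$, where
\[ F^+(U) = \operatorname{colim}_{\mathcal{U}} \check{H}^0(\mathcal{U}, F) \]
with $\mathcal{U}$ ranging over the (filtered) poset of coverings of $U$ ordered by refinement.

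The first step is to trim the colimit using the prebasis. By the very definition of a prebasis, every covering of $U$ admits a refinement whose members lie in $\mathscr{B}_X$, so coverings of the form $\{V_j \to U\}$ with $V_j \in \mathscr{B}_X$ are cofinal in the refinement-ordered diagram. For any such covering the Čech zeroth cohomology embeds into $\prod_j F(V_j)$, which is zero by hypothesis. Passing to the colimit gives $F^+(U) = 0$ for every $U \in \mathscr{B}_X$.

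Next I would iterate. The presheaf $F^+$ now also vanishes on $\mathscr{B}_X$, so re-running the argument above with $F$ replaced by $F^+$ yields $F^{\#}(U) = F^{++}(U) = 0$ for every $U \in \mathscr{B}_X$. To extend this to arbitrary objects, I would use that $F^{\#}$ is a sheaf: for any $V \in \operatorname{Ob}(X)$, choose a covering of $V$ and refine it to a covering $\{W_k \to V\}$ with $W_k \in \mathscr{B}_X$. The sheaf separation axiom then provides an injection
\[ F^{\#}(V) \hookrightarrow \prod_k F^{\#}(W_k), \]
whose target vanishes by the previous step, forcing $F^{\#}(V) = 0$.

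The only delicate point is the cofinality reduction used to compute $F^+(U)$ over $\mathscr{B}_X$-coverings alone. This follows directly from the stated definition of a prebasis (refinements exist within $\mathscr{B}_X$), so I do not foresee a substantive obstacle; the lemma is essentially a careful unwinding of the plus construction in the presence of a prebasis, and the rest is formal.
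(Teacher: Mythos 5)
Your proof is correct, but it takes a genuinely different route from the paper. The paper's proof is a one-liner via the universal property of sheafification: it suffices to show that any morphism $\varphi \colon F \to G$ to a sheaf $G$ vanishes, and this follows because any section $s \in F(V)$ restricts to zero on a $\mathscr{B}_X$-cover of $V$ (since $F$ vanishes there), and then $\varphi(s)$ restricts to zero on a cover, forcing $\varphi(s) = 0$ by the sheaf axiom. Your proof instead unwinds the plus construction, using cofinality of $\mathscr{B}_X$-coverings to compute $F^+$. Both are valid; the paper's is shorter and avoids the plus construction entirely, whereas yours is closer to a direct computation. One remark: your cofinality argument in fact shows $F^+(U) = 0$ for \emph{all} objects $U$, not just those in $\mathscr{B}_X$ — for any $U$, the $\mathscr{B}_X$-coverings are cofinal and each gives zero Čech $H^0$ — so $F^+ = 0$ as a presheaf and $F^{\#} = F^{++} = 0$ immediately. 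The iteration step and the final sheaf-separation argument are therefore unnecessary detours, though they are not errors.
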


\begin{proof}
By the universal property of sheafification, it is enough to prove for any sheaf $G$ and morphism of presheaves $\varphi \colon F \to G$ that $\varphi = 0$. Choose any $V \in \operatorname{Ob}(X)$ and section $s \in F(V)$. By assumption, there exists a cover $\{U_i \to V\}$ with $U_i \in \mathscr{B}_X$ such that $s|_{U_i} = 0$. Then $\varphi(s)|_{U_i} = \varphi(s|_{U_i}) = 0$. Since $G$ is a sheaf, $\varphi(s) = 0$. We conclude that $\varphi = 0$ as desired. \end{proof}

The technical lemma that we need is this:

\begin{lemma}\label{lem:LO}
Let $f \colon (Y,\mathscr{O}_Y) \to (X,\mathscr{O}_X)$ be a morphism of ringed sites which admit prebases $\mathscr{B}_X$ and $\mathscr{B}_Y$ respectively. Let $\mathscr{A}_Y \subseteq \operatorname{Mod}(\mathscr{O}_Y)$ be a weak Serre subcategory and fix $\mathscr{G} \in D(Y)$. Suppose the following conditions hold:

\begin{enumerate}

\item For any object $U$ in $ \mathscr{B}_X$, $u_f(U)$ is in $\mathscr{B}_Y$ where $u_f \colon\operatorname{Ob}(X) \to \operatorname{Ob}(Y)$ is the functor on objects associated to the morphism of sites $f$. \label{basispreservation}
\item For any $V \in \mathscr{B}_Y$ and $G\in \mathscr{A}_Y$,  $H^p(V,G) = 0$ for all $p > 0$. \label{coherentvanishing}
\item For any $q \in \mathbf{Z}$, $H^q(\mathscr{G}) \in \mathscr{A}_Y$. \label{coherentcohomology}

\end{enumerate}
Choose any integer $n \in \mathbf{Z}$. Then for any non-negative integer $j_0 $ such that $n + j_0  \geq 0$, the natural map
\[ H^n( Rf_\ast \mathscr{G} )\to H^n (Rf_\ast \tau_{\geq -j_0} \mathscr{G})\]
is an isomorphism.
\end{lemma}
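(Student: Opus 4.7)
The approach is to split $\mathscr{G}$ using its canonical truncation at degree $-j_0$. I would form the distinguished triangle
\[
\tau_{\leq -j_0-1}\mathscr{G} \to \mathscr{G} \to \tau_{\geq -j_0}\mathscr{G} \to \tau_{\leq -j_0-1}\mathscr{G}[1],
\]
apply $Rf_\ast$, and extract the associated long exact sequence of cohomology sheaves. The map in the statement is then an isomorphism as soon as $H^m(Rf_\ast \tau_{\leq -j_0-1}\mathscr{G}) = 0$ for both $m = n$ and $m = n+1$. Since the hypothesis $n + j_0 \geq 0$ gives $n, n+1 \geq -j_0$, the problem reduces to the following more general claim: if $K \in D(Y)$ has all cohomology sheaves in $\mathscr{A}_Y$ and $H^i(K) = 0$ for $i > M$, then $H^m(Rf_\ast K) = 0$ for every $m > M$. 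I would apply this with $M = -j_0-1$ to $K = \tau_{\leq -j_0-1}\mathscr{G}$.

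To prove the claim, I would observe that $H^m(Rf_\ast K)$ is the sheafification of the presheaf $U \mapsto H^m(R\Gamma(u_f(U), K))$. By Lemma \ref{lem:sheaf}, it suffices to show this presheaf vanishes on the prebasis $\mathscr{B}_X$, and hypothesis (1) then reduces the problem to showing $H^m(R\Gamma(V, K)) = 0$ for every $V \in \mathscr{B}_Y$ and every $m > M$.

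For this final vanishing I would invoke the second hypercohomology spectral sequence
\[
E_2^{p,q} = H^p(V, H^q(K)) \Rightarrow H^{p+q}(R\Gamma(V, K)).
\]
Because $\mathscr{A}_Y$ is a weak Serre subcategory, hypothesis (3) ensures each cohomology sheaf $H^q(K)$ lies in $\mathscr{A}_Y$, and hypothesis (2) then forces $E_2^{p,q} = 0$ for every $p > 0$. Only the column $p = 0$ survives, so the spectral sequence collapses onto that column and yields $H^m(R\Gamma(V, K)) = H^m(K)(V)$, which vanishes for $m > M$ by assumption.

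The main obstacle is that $K = \tau_{\leq -j_0-1}\mathscr{G}$ is bounded above but may be unbounded below, so strong convergence of the second hypercohomology spectral sequence is not automatic — this is precisely the convergence point treated only lightly in \cite{ref:spaltenstein}. To make the collapse rigorous in the unbounded-below setting, one should pass to a $K$-injective resolution $K \to I$, so that $R\Gamma(V, K) = I(V)$ without any further boundedness hypothesis, and then use condition (2) to verify directly that the edge map $H^m(K)(V) \to H^m(I(V))$ is an isomorphism. The prebasis formalism and Lemma \ref{lem:sheaf} are introduced precisely to localize this verification to the objects $V \in \mathscr{B}_Y$ on which hypothesis (2) applies.
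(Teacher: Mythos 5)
Your setup is correct and in fact slightly cleaner than the paper's: reducing to the vanishing statement $H^m(Rf_\ast K) = 0$ for $m > M$ where $K = \tau_{\leq -j_0-1}\mathscr{G}$ has cohomology in $\mathscr{A}_Y$ concentrated in degrees $\leq M$, and then localizing via Lemma~\ref{lem:sheaf} to the vanishing of $H^m(R\Gamma(V,K))$ for $V \in \mathscr{B}_Y$, is a valid reformulation of what the paper proves. The reduction step and the use of hypothesis~(\ref{basispreservation}) to pass to $V \in \mathscr{B}_Y$ both match the paper's strategy.

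The genuine gap is in your final paragraph. You correctly diagnose that the second hypercohomology spectral sequence is only conditionally convergent when $K$ is unbounded below, so the collapse on the $E_2$ page does not by itself yield $H^m(R\Gamma(V,K)) = 0$. But your proposed remedy --- pass to a $K$-injective resolution $K \to I$ and then ``verify directly that the edge map $H^m(K)(V) \to H^m(I(V))$ is an isomorphism'' --- is not a fix: that verification \emph{is} the content of the lemma, and hypothesis~(\ref{coherentvanishing}) does not hand it to you without further work. The paper supplies that work by invoking \cite[\href{https://stacks.math.columbia.edu/tag/0D6P}{Tag 0D6P}]{ref:stacks} (which uses Spaltenstein's $K$-injective machinery together with exactly hypotheses~(\ref{coherentvanishing}) and~(\ref{coherentcohomology})) to establish the left-completeness statement $\mathscr{G} \cong R\lim_j \tau_{\geq -j}\mathscr{G}$, then commutes $R\lim$ past $Rf_\ast$ and $R\Gamma(U,-)$, writes down the Milnor exact sequence, and proves via Lemma~\ref{lem:abelianlim} that the inverse system $H^n(U, Rf_\ast\tau_{\geq -j}\mathscr{G})$ stabilizes and that its $R^1\lim$ vanishes. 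Some argument of this kind --- exhibiting $K$ as a derived limit of bounded truncations and controlling the $R^1\lim$ --- is indispensable; without it your proof does not close. To complete your version you would apply Tag 0D6P to $K$ directly, commute $R\Gamma(V,-)$ with $R\lim$, and then use your spectral-sequence degeneration for each of the bounded truncations $\tau_{\geq -j}K$ (where convergence is unproblematic) together with a Mittag--Leffler argument on the resulting inverse system to kill the $R^1\lim$ term.
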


\begin{proof} The exact triangle
\[ Rf_\ast \tau_{ < -j_0} \mathscr{G} \to Rf_\ast \mathscr{G} \to R f_\ast \tau_{\geq -j_0} \mathscr{G}\]
gives rise to an exact sequence in cohomology
\[   H^n(R f_\ast \tau_{ < -j_0}  \mathscr{G}) \to H^n (Rf_\ast  \mathscr{G}) \to H^n(R f_\ast \tau_{\geq -j_0} \mathscr{G}) \to H^{n+1}(Rf_\ast \tau_{< -j_0}\mathscr{G}).\]
We need to show that the left and right-most terms are zero. To do this, first recall that If $(C, \mathscr{O}_C)$ is a ringed site and $K$ an object of $D(\mathscr{O}_C)$, the $n$-th cohomology sheaf of $K$, $H^n(K)$, is the sheafification of the presheaf  $U \mapsto H^n(U,K)$, where $H^n(U,K)$ is the $n$-th cohomology group $H^n( R\Gamma(U,K))$. Therefore, by this and Lemma \ref{lem:sheaf}, it is enough to show for any object $U \in \mathscr{B}_X$ that the following is true.

\begin{claim} \label{claim:properties} The map
\[H^{i}(U, Rf_\ast \mathscr{G}) \to H^{i}(U, Rf_\ast \tau_{\geq -j_0} \mathscr{G})\]
has the following properties. It is:
\begin{itemize}

\item Surjective for $i = n-1$.
\item An isomorphism for $i = n$.
\item Injective for $ i =n+1$.
\end{itemize}
\end{claim}

We show the $i=n$ case is an isomorphism as follows. First, using  assumptions (\ref{coherentvanishing})  (\ref{coherentcohomology}) we can apply \cite[\href{https://stacks.math.columbia.edu/tag/0D6P}{Tag 0D6P}]{ref:stacks} (which relies on the techniques of \cite{ref:spaltenstein} on $K$-injective resolutions) to conclude that the natural map
\[ \mathscr{G} \to R\lim \tau_{\geq -j} \mathscr{G}\]
is an isomorphism, where $j$ runs through all positive integers. Then, since derived pushforward commutes with derived limits \cite[\href{https://stacks.math.columbia.edu/tag/0A07}{Tag 0A07}]{ref:stacks},
\[ H^n(U, Rf_\ast \mathscr{G}) \stackrel{\sim}{\to} H^n(U, R\operatorname{lim} Rf_\ast \tau_{\geq -j} \mathscr{G}). \]
It is now sufficient to prove that
\[ H^n(U, R\operatorname{lim} Rf_\ast \tau_{\geq -j} \mathscr{G}) \stackrel{\sim}{\to} H^n(U, Rf_\ast \tau_{\geq - j_0} \mathscr{G}). \]
To get a hold on the derived limit, we use the Milnor exact sequence
\[ 0 \to R^1\lim H^{n-1}(U,Rf_\ast \tau_{\geq -j} \mathscr{G}) \to H^n(U,R\lim Rf_\ast \tau_{\geq -j} \mathscr{G})) \to  \lim H^n(U, Rf_\ast \tau_{\geq -j} \mathscr{G}) \to 0.  \]
\noindent We will show that
\begin{eqnarray*} R^1\lim H^{n-1}(U,Rf_\ast \tau_{\geq -j} \mathscr{G}) &=& 0,  \\
 \lim H^n(U,Rf_\ast \tau_{\geq -j} \mathscr{G}) &=&  H^n(U, Rf_\ast \tau_{\geq -j_0} \mathscr{G}). \end{eqnarray*}

To this end, consider the exact triangle
\[ H^{-(j_0+k)}(\mathscr{G})[j_0+k] \to \tau_{\geq -(j_0+k)} \mathscr{G} \to \tau_{\geq-(j_0+k) +1} \mathscr{G},\]
where $k$ is a positive integer that we will allow to vary. Applying $R\Gamma(U,-)$ and then taking cohomology, we get an exact sequence
\[  \begin{tikzcd}[row sep=1cm] H^{n+j_0+k }( U, Rf_\ast H^{-(j_0+k)} (\mathscr{G})) \ar{r} & H^{n}(U, Rf_\ast \tau_{\geq -(j_0+k)} \mathscr{G}) \ar{r} & H^{n}(U, Rf_\ast \tau_{\geq -(j_0+k)+1} \mathscr{G})  \ar[out=0, in=180, looseness=1, overlay]{dll} \\
 H^{n+j_0+k+1}(U, Rf_\ast H^{-(j_0+k)} (\mathscr{G}). & & {} \end{tikzcd}  \]
The key observation now is that for $k \geq 1$,
\begin{eqnarray*} 
H^{n+j_0+k }(U, Rf_\ast H^{-(j_0+k)} (\mathscr{G})) &\stackrel{\text{def}}{\equiv}& H^{n+j_0+k }(R\Gamma(U, Rf_\ast H^{-(j_0+k)} (\mathscr{G})))  \\
&=&H^{n+j_0+k }(u_f(U), H^{-(j_0+k)} (\mathscr{G})) \\
&=& 0.
\end{eqnarray*} Indeed by (\ref{basispreservation}), $u_f(U) \in \mathscr{B}_Y$ and the fact that $k \geq 1$ implies $n + j_0 + k >0$. The vanishing of the above cohomology group then follows from assumptions (\ref{coherentvanishing}) and (\ref{coherentcohomology}). Similarly, 
\[ H^{n+j_0+k+1}(U, Rf_\ast H^{-(j_0+k)} (\mathscr{G})) =0\]
 for $k \geq 1$ as well. The conclusion is that 
\[ G_{-j} \coloneqq H^n(U, Rf_\ast \tau_{\geq - j}\mathscr{G})\]
is an inverse system of abelian groups such that $G_{-(j_0+k)} \stackrel{\sim}{\to} G_{-(j_0+k)+1}$ for all $k \geq 1$. By Lemma \ref{lem:abelianlim} (\ref{iso})  below, we conclude that
\[ \lim H^n(U,Rf_\ast \tau_{\geq -j} \mathscr{G}) = H^n(U, Rf_\ast \tau_{\geq -j_0} \mathscr{G}).\]
Reasoning similarly using Lemma \ref{lem:abelianlim} (\ref{ML}),
\[ R^1\lim H^{n-1}(U,Rf_\ast \tau_{\geq -j} \mathscr{G}) = 0.\]
This  finishes the $i=n$ case of Claim \ref{claim:properties}. The rest of the aforementioned claim is proven similarly. \end{proof}

\begin{lemma} \label{lem:abelianlim}
Let $\{G_{-j}\}_{j \in \mathbf{N}}$ be an inverse system of abelian groups
\[\ldots \to G_{-3} \to G_{-2} \to G_{-1}. \]
Suppose there exists $j_0 \geq 1$ such that for all $k \geq 1$, $G_{-(j_0+k)} \to G_{-(j_0+k)+1}$ is surjective. Then the following hold:
\begin{enumerate}[(a)]

\item The system $\{G_{-j}\}_j$ is Mittag-Leffler. \label{ML}
\item The natural map $\lim G_{-j} \to G_{-j_0}$ is surjective. \label{surj}
\item In addition, if the transition maps $G_{-(j_0+k)} \to G_{-(j_0+k)+1)}$ are injective (a fortiori isomorphisms) for $k \geq 1$, then $\lim G_{-j} \to G_{-j_0}$ is an isomorphism. \label{iso}
\end{enumerate}
\end{lemma}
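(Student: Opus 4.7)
The plan is to verify each of the three parts by a direct unwinding of the inverse system, splitting the index set into the ``tail'' $j \geq j_0$ (where the hypothesis applies) and the ``head'' $j < j_0$ (where we just propagate down via the transition maps).

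For part (\ref{ML}), I would observe that for any $j \geq j_0$ a straightforward induction on $k$, using surjectivity of the maps $G_{-(j+1)} \to G_{-j}$ throughout the tail, shows that the composite $G_{-k} \to G_{-j}$ is surjective for every $k \geq j$; in particular the image stabilizes at $G_{-j}$ itself. For $j < j_0$, any transition $G_{-k} \to G_{-j}$ with $k \geq j_0$ factors as $G_{-k} \to G_{-j_0} \to G_{-j}$, and by the previous case the first map is surjective, so the image of $G_{-k} \to G_{-j}$ equals the image of $G_{-j_0} \to G_{-j}$ and is thus independent of $k$. This is the Mittag--Leffler condition.

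For part (\ref{surj}), given $g \in G_{-j_0}$, I would build a compatible family $(g_{-j}) \in \lim G_{-j}$ lying over $g$ as follows. Using surjectivity of $G_{-(j_0+k)} \to G_{-(j_0+k-1)}$ for $k \geq 1$, choose lifts $g_{-(j_0+k)}$ inductively, starting from $g_{-j_0} \coloneqq g$. For $j < j_0$, define $g_{-j}$ as the image of $g$ under the composition of transition maps $G_{-j_0} \to G_{-j}$. Compatibility with the transition maps is immediate from the construction, so this produces the required preimage in the inverse limit.

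For part (\ref{iso}), given part (\ref{surj}) it suffices to prove injectivity. If $(g_{-j}), (g_{-j}')$ are two coherent sequences with $g_{-j_0} = g_{-j_0}'$, I would argue componentwise: for $j \geq j_0$ the tail transition maps are now bijective by hypothesis, so each $g_{-j}$ is uniquely determined by its image in $G_{-j_0}$; for $j < j_0$ the value $g_{-j}$ is forced by compatibility with $g_{-j_0}$. Hence $(g_{-j}) = (g_{-j}')$. There is no real obstacle here; the lemma is a bookkeeping exercise, and the only care needed is to keep the two regimes $j \geq j_0$ and $j < j_0$ cleanly separated.
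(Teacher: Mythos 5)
Your proof is correct and takes essentially the same approach as the paper: stabilize the images using eventual surjectivity for (a), lift step-by-step up the tail and push down the head for (b), and reduce (c) to injectivity of the projection by observing that a coherent sequence is determined at every level by its $-j_0$ component. Your write-up is somewhat more explicit than the paper's but the underlying argument is identical.
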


\begin{proof}
Statement (\ref{ML}) follows from the fact that the transition maps are all eventually surjective. For (\ref{surj}), we must show that any  $x \in G_{-j_0}$ can be lifted to $G_{-j_0 +k}$ for all $k \geq 1$. This follows from the assumption that all transition maps  $G_{-(j_0+k)} \to G_{-(j_0+k)+1}$ are surjective. Finally, suppose that $G_{-(j_0+k)} \to G_{-j_0}$ is an isomorphism for all $k \geq 1$. If an element 
\[ \overrightarrow{x} \coloneqq (\ldots, x_{-(j_0+1)}, x_{-(j_0)}, x_{-(j_0-1)},\ldots, x_{-1}) \in  \lim G_{-j}\]
maps to zero in $G_{-j_0}$, then necessarily $x_{-j_0}=0$. By definition of the inverse limit, 
\[ x_{-(j_0-1)}  = \ldots = x_{-1} = 0.\]
By injectivity,  $x_{-(j_0+k)} =0$ for all $k \geq 1$ and therefore $\overrightarrow{x} = 0$. \end{proof}
\medskip

Here is the main result of this section. Recall that a morphism of ringed sites $f: (Y, \mathscr{O}_Y) \to (X,\mathscr{O}_X)$ is said to be \textit{flat} if the ring map $f^{-1} \mathscr{O}_X \to \mathscr{O}_Y$ is flat. In particular, this implies that the pullback $f^\ast \colon \operatorname{Mod}(\mathscr{O}_X) \to \operatorname{Mod}(\mathscr{O}_Y)$ is an exact functor. 

\begin{theorem} Let $f\colon (Y,\mathscr{O}_Y) \to (X,\mathscr{O}_X)$ be a flat morphism of ringed sites which admit prebases $\mathscr{B}_Y$ and $\mathscr{B}_X$ respectively. Fix  weak Serre subcategories $\mathscr{A}_Y \subseteq \operatorname{Mod}(\mathscr{O}_Y)$ and $\mathscr{A}_X \subseteq \operatorname{Mod}(\mathscr{O}_X)$. Suppose that the following conditions hold: \label{thm:genpc}
\begin{enumerate}[(i)]
 
\item The pullback map $f^\ast \colon \operatorname{Mod}(\mathscr{O}_X) \to \operatorname{Mod}(\mathscr{O}_Y)$ restricts to an equivalence of categories \label{heart}
\[ f^\ast \colon \mathscr{A}_X \stackrel{\sim}{\to} \mathscr{A}_Y.\] 
\item For every $F,F' \in \mathscr{A}_X$ and $n \in \mathbf{Z}$, the natural map
\[ \operatorname{Ext}^n_{\mathscr{O}_X}(F,F') \to \operatorname{Ext}^n_{\mathscr{O}_Y}(f^\ast F, f^\ast F')\]
is an isomorphism of abelian groups.\label{ext} 

\item For any object $U$ in $ \mathscr{B}_X$, $u_f(U)$ is in $\mathscr{B}_Y$ where $u_f \colon\operatorname{Ob}(X) \to \operatorname{Ob}(Y)$ is the functor associated to the morphism of sites $f$. \label{objects}
\item For every $G \in \mathscr{A}_Y$ and $V \in \mathscr{B}_Y$, $H^p(V,G) = 0$ for all $p > 0$.\label{vanishing}
\end{enumerate}
Then the pullback $f^\ast \colon D^-(X) \to D^-(Y)$ restricts to an equivalence of categories
\[ f^\ast \colon D^{-}_{\mathscr{A}_X}(X) \stackrel{\sim}{\to} D^-_{\mathscr{A}_Y}(Y).\]
\end{theorem}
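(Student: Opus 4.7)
The inclusion $f^\ast D^-_{\mathscr{A}_X}(X) \subseteq D^-_{\mathscr{A}_Y}(Y)$ is immediate: flatness makes $f^\ast$ exact, so $H^q(f^\ast E) = f^\ast H^q(E) \in \mathscr{A}_Y$ by (\ref{heart}), and boundedness above is preserved trivially. It remains to prove full faithfulness and essential surjectivity.

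For \emph{full faithfulness}, I would use the $(f^\ast, Rf_\ast)$ adjunction to rephrase the goal as showing that the unit $\eta_{E'} \colon E' \to Rf_\ast f^\ast E'$ is a quasi-isomorphism for every $E' \in D^-_{\mathscr{A}_X}(X)$. Apply Lemma \ref{lem:LO} with $\mathscr{G} = f^\ast E'$, whose hypotheses (\ref{basispreservation}), (\ref{coherentvanishing}), (\ref{coherentcohomology}) are granted by (\ref{objects}), (\ref{vanishing}), and the identity $H^q(f^\ast E') = f^\ast H^q(E') \in \mathscr{A}_Y$, respectively. The lemma reduces the computation of $H^n(Rf_\ast f^\ast E')$, degree by degree, to the bounded truncation $\tau_{\geq -j_0} E'$ with $j_0 \geq -n$. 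On this bounded complex I would induct on cohomological amplitude via the truncation triangle and the five-lemma, reducing to the case $E' = F$ with $F \in \mathscr{A}_X$ a single sheaf. For $n > 0$, the sheaf $H^n(Rf_\ast f^\ast F)$ is the sheafification of $U \mapsto H^n(u_f(U), f^\ast F)$, which vanishes on $\mathscr{B}_X$ by (\ref{objects})--(\ref{vanishing}) and hence globally by Lemma \ref{lem:sheaf}. For $n = 0$, the $n = 0$ case of (\ref{ext}) combined with the $(f^\ast, f_\ast)$ adjunction shows that for every $F' \in \mathscr{A}_X$ the map $\Hom_X(F', F) \to \Hom_X(F', f_\ast f^\ast F)$ induced by $\eta_F$ is an isomorphism; a Yoneda argument exploiting the equivalence (\ref{heart}) then promotes this to $\eta_F$ itself being an isomorphism.

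For \emph{essential surjectivity}, given $G \in D^-_{\mathscr{A}_Y}(Y)$, in the bounded case I would induct on cohomological amplitude: the base case lifts a single sheaf by (\ref{heart}), and in the inductive step the gluing extension class in the distinguished triangle $\tau_{<b} G \to G \to H^b(G)[-b]$ is transported from $Y$ to $X$ by (\ref{ext}), extended from $\mathscr{A}$ to $D^b_{\mathscr{A}}$ by a standard spectral sequence argument now available since full faithfulness is already in hand. To handle bounded-above $G$, write $G \simeq R\lim_j \tau_{\geq -j} G$ (the Spaltenstein quasi-isomorphism cited in the proof of Lemma \ref{lem:LO}), use full faithfulness to assemble compatible lifts $E_j \in D^b_{\mathscr{A}_X}(X)$ of the truncations with chosen transition maps, and set $E \coloneqq R\lim_j E_j$. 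The Milnor exact sequence together with Lemma \ref{lem:abelianlim} then shows that $E$ lies in $D^-_{\mathscr{A}_X}(X)$ and that $f^\ast E \simeq G$.

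The step I anticipate being the main obstacle is this last one: stitching the tower of bounded lifts into a genuine object whose $R\lim$ has the correct pullback, because full faithfulness only selects transition morphisms up to isomorphism, whereas the $R^1\lim$ contributions governed by Lemma \ref{lem:abelianlim} must vanish in each cohomological degree for the derived limit to behave as required. The combined force of Lemmas \ref{lem:LO} and \ref{lem:abelianlim} is exactly what is engineered to overcome this.
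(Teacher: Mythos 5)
Your overall architecture --- establish the bounded case first, then bootstrap to $D^-$ by truncation --- matches the paper's. Two points differ in a substantive way and deserve scrutiny.

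\textbf{Full faithfulness.} You reformulate the goal as showing the unit $\eta_{E'} \colon E' \to Rf_\ast f^\ast E'$ is a quasi-isomorphism, then reduce to a single sheaf $F \in \mathscr{A}_X$. For $n > 0$ the sheafification vanishing is clean, but the $n = 0$ step is underjustified: you deduce that $\operatorname{Hom}_X(F', F) \to \operatorname{Hom}_X(F', f_\ast f^\ast F)$ is an isomorphism for all $F' \in \mathscr{A}_X$, then invoke Yoneda. But Yoneda needs the target $f_\ast f^\ast F$ to lie in $\mathscr{A}_X$ (or the class of test objects $F'$ to be all of $\operatorname{Mod}(\mathscr{O}_X)$), and neither is supplied by hypotheses (\ref{heart})--(\ref{vanishing}). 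The paper avoids this by never asserting the unit is an isomorphism on single sheaves: it proves $R\Hom_{\mathscr{O}_X}(\mathscr{F},\mathscr{F}') \xrightarrow{\sim} R\Hom_{\mathscr{O}_Y}(f^\ast\mathscr{F}, f^\ast\mathscr{F}')$ directly by double induction on amplitude, which is exactly what full faithfulness requires and does not pass through the object $f_\ast f^\ast F$.

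\textbf{Essential surjectivity in the unbounded case.} This is the genuine departure. You lift each truncation $\tau_{\geq -j}G$ to $E_j \in D^b_{\mathscr{A}_X}(X)$, assemble the tower using (now established) full faithfulness, and set $E \coloneqq R\lim E_j$. The real obstacle here is not the $R^1\lim$ vanishing you flag --- that is handled by eventual stability of $H^n(E_j)$ degree by degree, exactly as in Lemma~\ref{lem:abelianlim}. The gap is the interchange $f^\ast R\lim E_j \simeq R\lim f^\ast E_j$. As a left adjoint, $f^\ast$ commutes with colimits, not limits, and it has no reason to commute with homotopy limits of arbitrary towers; so there is a canonical comparison map $f^\ast R\lim E_j \to R\lim f^\ast E_j \simeq G$, but checking it is a quasi-isomorphism requires an explicit degreewise argument using the stabilization, and this is precisely the kind of work Lemma~\ref{lem:LO} is engineered to absorb. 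The paper sidesteps the entire $R\lim$ construction: it proves, using Lemma~\ref{lem:LO} and the already-established bounded equivalence, that both the unit $\mathscr{F} \to Rf_\ast f^\ast\mathscr{F}$ and the counit $f^\ast Rf_\ast\mathscr{G} \to \mathscr{G}$ are isomorphisms on $D^-_{\mathscr{A}}$, so that $Rf_\ast$ itself is the quasi-inverse. This gives essential surjectivity without ever forming a derived limit of objects on $X$. Your route should be repairable, but as written the passage from ``$H^n(E)$ has the right isomorphism type'' to ``there is a quasi-isomorphism $f^\ast E \simeq G$'' is not supplied, and it is the crux.
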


\begin{proof}
We will first prove that $f^\ast$ restricts to a fully faithful functor 
\[f^\ast \colon D^b_{\mathscr{A}_X}(X) \to D^b_{\mathscr{A}_Y}(Y).\]
The fact that $f^\ast$ sends bounded objects to bounded objects is clear because $f$ is flat, a fortiori commutes with cohomology. Fix $\mathscr{F} \in  D^b_{\mathscr{A}_X}(X) $. It is enough to prove for any $\mathscr{F}' \in D^b_{\mathscr{A}_X}(X)$ that
\begin{equation} \label{eq:fullfaithful} R\Hom_{\mathscr{O}_{X}}(\mathscr{F}, \mathscr{F}') = R\Hom_{\mathscr{O}_Y}(f^\ast \mathscr{F}, f^\ast \mathscr{F}').\end{equation}
Let us first assume that $\mathscr{F}'$ is concentrated in a single degree, so in particular is in $\mathscr{A}_X$. We will prove the equality above by induction on the length of $\mathscr{F}$. If $\mathscr{F}$ is concentrated in a single degree, then this is condition (\ref{ext}) above. Now suppose that $\mathscr{F}$ is a bounded complex concentrated in the interval $[a,b]$. Consider the exact triangle
\[ H^{a}(\mathscr{F})[-a] \to \mathscr{F} \to \tau_{\geq a+1} \mathscr{F} .\]
Applying $f^\ast$ and then $R\Hom_{\mathscr{O}_Y}(-, f^\ast\mathscr{F}')$ gives a diagram
\[ \begin{tikzcd}  R\Hom_{\mathscr{O}_X}( \tau_{\geq a+1} \mathscr{F}, \mathscr{F}') \ar{r} \ar{d} & R\Hom_{\mathscr{O}_X}(\mathscr{F}, \mathscr{F}') \ar{r} \ar{d} & R\Hom_{\mathscr{O}_X}( H^{a}(\mathscr{F})[-a], \mathscr{F}') \ar{d}  \\ 
 R\Hom_{\mathscr{O}_Y}(f^\ast \tau_{\geq a+1} \mathscr{F}, f^\ast \mathscr{F}') \ar{r} & R\Hom_{\mathscr{O}_Y}(f^\ast \mathscr{F}, f^\ast \mathscr{F}') \ar{r}  & R\Hom_{\mathscr{O}_Y}(f^\ast  H^{a}(\mathscr{F})[-a], f^\ast  \mathscr{F}') .\end{tikzcd} \] 
By induction, the left vertical arrow is an isomorphism. Since all the cohomology sheaves of $\mathscr{F}$ are in $\mathscr{A}_X$, the right vertical arrow is an isomorphism. Therefore the middle vertical arrow is an isomorphism. This shows (\ref{eq:fullfaithful})  in the case that $\mathscr{F}'$ is concentrated in a single degree, and repeating the same argument above shows (\ref{eq:fullfaithful})  in general. Therefore $f^\ast \colon D^b_{\mathscr{A}_X}(X) \to D^b_{Y}(Y)$ is fully faithful.
\medskip

We now prove that 
\[f^\ast \colon D^b_{\mathscr{A}_X}(X) \to D^b_{\mathscr{A}_Y}(Y).\]
is essentially surjective, a fortiori an equivalence of categories. Fix $\mathscr{G} \in D^b_{\mathscr{A}_Y}(Y)$. Suppose that $\mathscr{G}$ is concentrated in the interval $[a,b]$. Consider the exact triangle
\[  (\tau_{\geq a +1 } \mathscr{G})[-1] \to H^a(\mathscr{G})[-a] \to \mathscr{G}.\]
By (\ref{heart}) and the inductive hypothesis, there exists $\mathscr{F}, \mathscr{F}'$ so that
\begin{eqnarray*} f^\ast \mathscr{F} &=& (\tau_{\geq a +1 } \mathscr{G})[-1], \\
f^\ast \mathscr{F}' &=&  H^a(\mathscr{G})[-a].\end{eqnarray*}
By full faithfulness, the morphism  $(\tau_{\geq a +1 } \mathscr{G})[-1] \to H^a(\mathscr{G})[-a] $ is equal to $f^\ast \varphi$ for some $f \colon \mathscr{F} \to \mathscr{F}'$. It follows that $\mathscr{G} \cong f^\ast \operatorname{Cone}(\mathscr{F} \to \mathscr{F}')$ and so we have the equivalence on bounded derived categories as claimed.
\medskip

Finally, we will use Lemma \ref{lem:LO} to reduce to the bounded case as follows. We must show for any $\mathscr{G} \in D^-_{\mathscr{A}_Y}(Y)$ and $\mathscr{F} \in D^-_{\mathscr{A}_X}(X)$ that the counit $f^\ast Rf_\ast \mathscr{G} \to \mathscr{G}$ and  unit $\mathscr{F} \to Rf_\ast f^\ast \mathscr{F}$
are isomorphisms. Equivalently, for any $n \in \mathbf{Z}$ that
\[H^n(f^\ast Rf_\ast \mathscr{G}) \to H^n(\mathscr{G}) \hspace{0.5cm}  \text{and} \hspace{0.5cm} H^n(\mathscr{F}) \to H^n(Rf_\ast f^\ast \mathscr{F})\]
are isomorphisms. Consider first the case of the counit. Fix $n \in \mathbf{Z}$ and choose a non-negative integer $j_0 \geq 0$ such that $n + j_0 \geq 0$. We want to show that the arrow (1) in the diagram below is an isomorphism.
\[ \begin{tikzcd} H^n(f^\ast Rf_\ast \mathscr{G}) \ar{r}{(1)} \ar[equals,d] & H^n(\mathscr{G}) \ar{ddd}{(4)} \\
f^\ast H^n(Rf_\ast \mathscr{G}) \ar{d}{(2)} & {} \\
f^\ast H^n(Rf_\ast \tau_{\geq -j_0} \mathscr{G}) \ar[equals]{d} & {} \\
 H^n(f^\ast Rf_\ast \tau_{\geq -j_0} \mathscr{G}) \ar{r}{(3)}& H^n( \tau_{\geq -j_0}\mathscr{G}) 
\end{tikzcd} \]
The displayed equalities follow from flatness and arrow (2) is an isomorphism from Lemma \ref{lem:LO}. Furthermore, the object $\tau_{\geq -j_0} \mathscr{G}$ is bounded and therefore (3) is an isomorphism. Arrow (4) is trivially an isomorphism and therefore (1) is an isomorphism as desired. Reasoning similarly with the diagram 
\[ \begin{tikzcd} H^n(\mathscr{F}) \ar{r} \ar{dd} & H^n(Rf_\ast f^\ast \mathscr{F}) \ar{d} \\
{} & H^n(Rf_\ast \tau_{\geq -j_0} f^\ast \mathscr{F}) \ar[equals]{d} \\
H^n(\tau_{\geq -j_0} \mathscr{F}) \ar{r} & H^n(Rf_\ast f^\ast \tau_{\geq -j_0} \mathscr{F}) 
\end{tikzcd} \]
allows us to conclude that the unit is an isomorphism. This completes the proof of  Theorem \ref{thm:genpc}. \end{proof}

\section{Formal GAGA  for Pseudo-Coherent Complexes} \label{section:gagapc}

\subsection{Preliminary Definitions} Let $R$ be a ring. Recall that an $R$-module $M$ is said to be finitely presented if there exists an exact sequence $R^{n_{-1}} \to R^{n_0} \to M \to 0$. Alternatively, thinking of $M$ as being the complex $M[0]$, there exists a morphism of complexes
\[  \begin{tikzcd}  \ldots \ar{r} & 0\ar{d}  \ar{r} & R^{n_{-1}} \ar{r} \ar{d}  & R^{n_0} \ar{d} \ar{r} & 0 \ar{d} \ar{r}  & \ldots \\
\ldots \ar{r} &0 \ar{r} & 0 \ar{r} & M \ar{r} & 0\ar{r}  & \ldots \end{tikzcd} \]
with the following properties. 
\begin{itemize}

\item  The map on cohomology is an isomorphism in degree $0$. 
\item The map on cohomology  is surjective in degree $-1$.
\end{itemize} 

\noindent This motivates the following definition:

\begin{definition}[{\cite[\href{https://stacks.math.columbia.edu/tag/064Q}{Tag 064Q}]{ref:stacks}}]\label{def:pc} Let $R$ be a ring. A complex of $R$-modules $P^{\bullet}$ is said to be \textit{$m$-pseudo coherent} (for some $m \in \mathbf{Z}$) if there exists a bounded complex of finite free modules $F^{\bullet}$ and a morphism of complexes  $F^{\bullet} \to P^{\bullet}$ such that  the following hold:
\begin{itemize}
\item $H^i(F^{\bullet}) \to H^i(P^{\bullet})$ is an isomorphism for $i > m$. 
\item $H^i(F^{\bullet}) \to H^i(P^{\bullet})$  and surjective for $i = m$. 
\end{itemize} 
A complex $P^{\bullet}$  is \textit{pseudo-coherent} if it is $m$-pseudo-coherent for every $m \in \mathbf{Z}$. An object $P \in D(R)$ is \textit{pseudo-coherent} if it is quasi-isomorphic to a pseudo-coherent complex.
\end{definition}

\begin{example} An $R$-module $M$ is finite if and only if $M[0] \in D(R)$ is $0$-pseudo-coherent, and finitely presented if and only if $M[0]$ is $-1$-pseudo-coherent. This is clear from the definition above.
\end{example}

\begin{remark}
Let $P$ be an object of $D(R)$. Then $P$ is pseudo-coherent if and only if there exists a quasi-isomorphism $F^{\bullet} \to P$, where $F^{\bullet}$ is a bounded above complex of finite free $R$-modules. This is \cite[\href{https://stacks.math.columbia.edu/tag/064T}{Tag 064T}]{ref:stacks}.
\end{remark}

We will let $D^-_{\operatorname{pc}}(R)$ denote the full triangulated subcategory of $D(R)$ consisting of objects of $D(R)$ quasi-isomorphic to a pseudo-coherent complex. The category $D^-_{\text{pc}}$ is triangulated by \cite[\href{https://stacks.math.columbia.edu/tag/064V}{Tag 064V}]{ref:stacks} and \cite[\href{https://stacks.math.columbia.edu/tag/064X}{Tag 064X}]{ref:stacks}. The definition for rings generalizes to the following situation:

\begin{definition} [{\cite[\href{https://stacks.math.columbia.edu/tag/08FT}{Tag 08FT}]{ref:stacks}}]Let  $(C,\mathscr{O}_C)$ be a ringed site. An object $P \in D(C)$ is said to be $m$-pseudo-coherent, if every $U \in \operatorname{Ob}(C)$ admits a covering $\{U_j \to U\}$, and morphisms $\alpha_j \colon F_j^{\bullet} \to P|_{U_j}$ (with each $F_j^{\bullet}$ a bounded complex of finite projective modules) such that the following hold:
\begin{itemize}
\item $H^i(\alpha_j)$ is an isomorphism for all $i > m$.
\item $H^i(\alpha_j)$ is surjective for $i = m$.
\end{itemize} 
\end{definition} 

 We say that an object $P \in D(C)$ is pseudo-coherent if it is $m$-pseudo-coherent for all $m \in \mathbf{Z}$.  As in the case of rings, we will let $D^-_{\operatorname{pc}}(C)$ denote the full triangulated subcategory of $D(C)$ of objects quasi-isomorphic to a bounded above pseudo-coherent complex. We need an important permanence property of pseudo-coherent objects:

\begin{lemma} Let $f \colon (Y, \mathscr{O}_Y) \to (X,\mathscr{O}_X)$ be a morphism of ringed sites. Then $Lf \colon D(X) \to D(Y)$ restricts to a functor $D^-_{\operatorname{pc}}(X) \to D^-_{\operatorname{pc}}(Y)$.
\end{lemma}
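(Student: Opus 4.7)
The plan is to fix $P \in D^-_{\operatorname{pc}}(X)$ and verify that $Lf^{\ast} P$ is $m$-pseudo-coherent for every integer $m$. The crucial reformulation is that a morphism $\alpha \colon F^{\bullet} \to P|_U$ satisfies the approximation condition of Definition~\ref{def:pc} at level $m$ if and only if $\operatorname{Cone}(\alpha)$ lies in $D^{\leq m-1}(U)$; this drops out of the long exact cohomology sequence of the triangle $F^{\bullet} \to P|_U \to \operatorname{Cone}(\alpha) \to$. This shifts the burden onto two preservation properties of $Lf^{\ast}$.

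First, $Lf^{\ast}$ is right $t$-exact, preserving $D^{\leq n}$ for every $n$, because it is left adjoint to the left-exact functor $Rf_{\ast}$. Second, a bounded complex $F^{\bullet}$ of finite locally projective $\mathscr{O}_X$-modules consists of flat objects, so the derived pullback $Lf^{\ast} F^{\bullet}$ coincides with $f^{\ast} F^{\bullet}$ and is again a bounded complex of finite locally projective $\mathscr{O}_Y$-modules. Since $Lf^{\ast}$ is triangulated, any local approximation $\alpha_i \colon F_i^{\bullet} \to P|_{U_i}$ on a cover $\{U_i \to U\}$ in $X$ pulls back to a morphism $Lf^{\ast}\alpha_i \colon f^{\ast} F_i^{\bullet} \to (Lf^{\ast} P)|_{u_f(U_i)}$ whose cone is $Lf^{\ast}\operatorname{Cone}(\alpha_i) \in D^{\leq m-1}$; together with continuity of $u_f$ (so that $\{u_f(U_i) \to u_f(U)\}$ is a cover in $Y$), this produces an $m$-pseudo-coherent approximation on every object of the form $u_f(U)$.

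The final step is to upgrade these approximations, which live naturally on opens in the image of $u_f$, to a verification of $m$-pseudo-coherence at every $V \in \operatorname{Ob}(Y)$. This is the main bookkeeping obstacle, since for a general morphism of ringed sites an arbitrary $V$ need not arise from $X$. The resolution is that $m$-pseudo-coherence descends along refinements of covers, so it is enough to cover each $V$ by opens on which we already have local approximations; in the situations of interest in this paper the objects $u_f(U)$ in fact provide a prebasis of $Y$ (compare the role of $\mathscr{B}_X$ and $\mathscr{B}_Y$ in Theorem~\ref{thm:genpc}), which makes the step immediate.
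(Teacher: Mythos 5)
The paper's own proof is a one-line citation to the Stacks Project (Tag 08H4), so there is no in-paper argument to compare against; you are effectively reconstructing the argument that citation relies on. The core of your reconstruction is sound: rephrasing the $m$-pseudo-coherence condition as $\operatorname{Cone}(\alpha) \in D^{\leq m-1}$, using that $Lf^\ast$ is right $t$-exact and agrees with $f^\ast$ on bounded complexes of finite locally free (hence $K$-flat) modules, using that $Lf^\ast$ is triangulated so that cones are preserved, and using continuity of $u_f$ to push covers forward, are precisely the ingredients one needs, and they are essentially the ingredients the cited proof uses as well.

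The gap is in your final paragraph, and it is a genuine one: you conclude by restricting to the case where the objects $u_f(U)$ form a prebasis of $Y$. That assumption is not among the hypotheses, and the lemma is stated and used for arbitrary morphisms of ringed sites. Fortunately the general case closes with no extra hypotheses, using the step you already articulated but then abandoned. Since approximations restrict along arbitrary morphisms of the site (restriction preserves strict perfection and keeps the cone in $D^{\leq m-1}$), it suffices to show that every $V \in \operatorname{Ob}(Y)$ admits a covering $\{V_j \to V\}$ with each $V_j$ mapping to some $u_f(U)$ on which you already built an approximation. This always holds: the objects of $X$ cover the terminal object of $\operatorname{Sh}(X)$, so gathering the covers over all $U \in \operatorname{Ob}(X)$ produces a family $\{U_i\}_i$ of objects of $X$ covering the terminal sheaf and carrying approximations $\alpha_i$ for $P$. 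Since $f^{-1}$ is exact, preserves the terminal sheaf, and sends the sheaf represented by $U_i$ to the sheaf represented by $u_f(U_i)$, the family $\{u_f(U_i)\}_i$ covers the terminal object of $\operatorname{Sh}(Y)$. Unwinding what it means for this family to cover the terminal sheaf gives exactly the desired coverings $\{V_j \to V\}$ factoring through the $u_f(U_i)$, and restricting $Lf^\ast \alpha_{i(j)}$ along $V_j \to u_f(U_{i(j)})$ finishes the argument. Replacing your last paragraph with this step makes the proof complete in the stated generality; no prebasis assumption is required.
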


\begin{proof}
See \cite[\href{https://stacks.math.columbia.edu/tag/08H4}{Tag 08H4}]{ref:stacks}.\end{proof}
\medskip

\begin{remark} \label{rem:pc}
If $(C,\mathscr{O}_C)$ is a ringed site with coherent structure sheaf, then $D^-_{\operatorname{pc}}(C) = D^-_{\operatorname{coh}}(C)$, where the latter is the full subcategory of $D(C)$ of objects quasi-isomorphic to a bounded above complex of $\mathscr{O}_C$-modules with coherent cohomology \cite[Expos\'{e} 1, Corollary 3.5]{ref:sga6}. As all the ringed sites we consider in this article have coherent structure sheaf, we will henceforth take pseudo-coherent to mean  ``bounded above with coherent cohomology''.
\end{remark}

\subsection{The GAGA Theorem}
Let $X$ be a locally Noetherian scheme and $I \subseteq \mathscr{O}_X$ a coherent ideal defining a closed subscheme $X_0 \subseteq X$. The \textit{formal completion} of $X$ along $X_0$ (denoted $\widehat{X}_{\operatorname{formal, \zar}}$) is the topologically ringed site $(X_0, \widehat{\mathscr{O}}_{X, \operatorname{formal}})$, where $ \widehat{\mathscr{O}}_{X, \operatorname{formal}}$ is the sheaf of rings
\[  \widehat{\mathscr{O}}_{X_{\operatorname{formal}}}(U_0) \coloneqq \lim H^0(U_n, \mathscr{O}_{U_n}).\]
Here $U_n$ is the unique lifting of the Zariski open $U_0 \to X_0$ to a Zariski open $U_n \to X_n$. For example, if $X = \spec A$, then $\widehat{X}_{\operatorname{formal, \zar}}$ is nothing more than $\operatorname{Spf} \widehat{A}$, where $\widehat{A}$ is the $I$-adic completion of $A$. To state our GAGA theorem, we will need an analogous notion of formal completion for stacks. However, the definition above for schemes does not generalize because the lisse-\'{e}tale site is not invariant under infinitesimal deformations. Instead, we will work with a  surrogate definition of formal completion.

\begin{definition}{\cite[Definition 1.2]{ref:conrad}} The ringed site $\widehat{\hspace{0pt}\mathscr{X}}$ is the lisse-\'{e}tale site of $\mathscr{X}$ equipped  with the sheaf of rings
\[ \widehat{\mathscr{O}}_{\mathscr{X}} \coloneqq \lim \mathscr{O}_{\mathscr{X}}/\mathscr{I}^{n+1},\]
where the limit is taken in the category of lisse-\'{e}tale $\mathscr{O}_{\mathscr{X}}$-modules.
\end{definition}

There is a canonical morphism of ringed sites
\[ \iota\colon \widehat{\hspace{0pt} \mathscr{X}} \to \mathscr{X}\]
which on objects is simply the identity functor. Furthermore, $\iota$ is flat by \cite[Lemma 3.3]{ref:gerdzb}. We now recall the main result of \cite{ref:conrad}.

\begin{theorem} \label{thm:conradgaga}
Let $\mathscr{X}$ be an algebraic stack that is proper over an $I$-adically complete Noetherian ring $A$. Let $\mathscr{I}$ denote the pullback of $I $ to $\mathscr{X}$ and $\widehat{\hspace{0pt}\mathscr{X}}$  the ringed site as defined above with respect to $\mathscr{I}$. Define $\mathscr{X}_n \coloneqq \mathscr{X} \times_A A/I^{n+1}$. Then there are natural equivalences of categories
\[ \operatorname{Coh}(\mathscr{X}) \stackrel{\sim}{\to} \operatorname{Coh}(\widehat{\hspace{0pt}\mathscr{X}}) \stackrel{\sim}{\to} \lim \operatorname{Coh}(\mathscr{X}_n).\]
\end{theorem}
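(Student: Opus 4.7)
The plan is to prove the two equivalences in $\operatorname{Coh}(\mathscr{X}) \stackrel{\sim}{\to} \operatorname{Coh}(\widehat{\hspace{0pt}\mathscr{X}}) \stackrel{\sim}{\to} \lim \operatorname{Coh}(\mathscr{X}_n)$ separately and then compose. The second equivalence is the easier one: I would use the functor $\widehat{\mathscr{F}} \mapsto \{\widehat{\mathscr{F}} \otimes_{\widehat{\mathscr{O}}_{\mathscr{X}}} \mathscr{O}_{\mathscr{X}_n}\}$, noting that this is essentially a local statement on a smooth affine chart $\operatorname{Spec} R \to \mathscr{X}$, where it reduces to the classical fact that coherent modules on the $I$-adic completion of a Noetherian ring correspond to adic systems of finitely generated $R/I^{n+1}R$-modules. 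The only subtlety is checking that the inverse construction $\{\mathscr{F}_n\} \mapsto \lim \mathscr{F}_n$, taken in lisse-\'{e}tale $\mathscr{O}_{\mathscr{X}}$-modules, produces a coherent $\widehat{\mathscr{O}}_{\mathscr{X}}$-module and that the two constructions are mutually inverse.

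For the first equivalence $\iota^\ast \colon \operatorname{Coh}(\mathscr{X}) \to \operatorname{Coh}(\widehat{\hspace{0pt}\mathscr{X}})$, I would treat full faithfulness and essential surjectivity in turn. Full faithfulness reduces, via the internal-$\operatorname{Hom}$ formalism and the fact that $\uhom(\mathscr{F}, \mathscr{G})$ is coherent whenever $\mathscr{F}$ is, to showing the comparison map $H^0(\mathscr{X}, \mathscr{H}) \to H^0(\widehat{\hspace{0pt}\mathscr{X}}, \iota^\ast \mathscr{H})$ is an isomorphism for every coherent $\mathscr{H}$. This is the stacky theorem of formal functions: each $H^i(\mathscr{X}, \mathscr{H})$ should be a finitely generated $A$-module (using finiteness of coherent cohomology on proper stacks, itself obtained via a stacky Chow's lemma \`{a} la Olsson), and its $I$-adic completion then equals $\lim H^i(\mathscr{X}_n, \mathscr{H}_n)$, which in turn computes $H^0(\widehat{\hspace{0pt}\mathscr{X}}, \iota^\ast \mathscr{H})$ through a Milnor-type spectral sequence comparing inverse limits of cohomology with cohomology of the inverse limit.

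Essential surjectivity is the heart of the proof. The strategy is d\'{e}vissage via Chow's lemma for algebraic stacks: there exists a proper surjection $\pi \colon X' \to \mathscr{X}$ from an algebraic space $X'$ which is an isomorphism over a dense open substack of $\mathscr{X}$. Given a coherent formal sheaf $\widehat{\mathscr{F}}$ on $\widehat{\hspace{0pt}\mathscr{X}}$, I would run Noetherian induction on $\operatorname{supp}(\widehat{\mathscr{F}})$, using classical GAGA for algebraic spaces (\cite[Theorem 6.3]{ref:knutson}) to algebraize $\pi^\ast \widehat{\mathscr{F}}$, and then comparing $\widehat{\mathscr{F}}$ with $\pi_\ast \pi^\ast \widehat{\mathscr{F}}$ by an exact triangle whose third term is supported on a strictly smaller closed substack, to which the inductive hypothesis applies. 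The base case is classical Grothendieck existence on a proper Noetherian scheme.

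The hard part will be the two inputs underlying the d\'{e}vissage. First, one needs finite generation of $H^i(\mathscr{X}, -)$ on coherent sheaves together with a derived projection formula compatible with formal completion; both rely on a workable Chow's lemma for algebraic stacks, which is much subtler than in the case of schemes or algebraic spaces. Second, the lisse-\'{e}tale site is not functorial for smooth morphisms in the naive way, so the interaction between $\iota^\ast$, $\iota_\ast$ and the pushforward $\pi_\ast$ appearing in the induction must be navigated carefully; this technical awkwardness is precisely what motivates the surrogate definition of $\widehat{\hspace{0pt}\mathscr{X}}$ as the lisse-\'{e}tale site of $\mathscr{X}$ equipped with the completed structure sheaf, rather than some genuine formal analogue.
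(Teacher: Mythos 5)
The paper does not prove this theorem at all: immediately before the statement it writes ``We now recall the main result of \cite{ref:conrad},'' so Theorem \ref{thm:conradgaga} is cited as a black box from Conrad's paper (the stacky Grothendieck existence theorem, \cite[Theorem 4.1]{ref:conrad}, together with the surrounding formal GAGA apparatus of that reference). There is therefore no proof in this paper to compare your sketch against; the only role of the theorem here is as an input to condition (\ref{heart}) when verifying the hypotheses of Theorem \ref{thm:genpc} in the proof of Theorem \ref{thm:gagapc}.

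That said, your outline is a sensible reconstruction of the argument that \emph{does} appear in \cite{ref:conrad} and \cite{ref:olsson}: the second equivalence is the local, formally-smooth-chart reduction to the classical affine statement for $I$-adic completions; full faithfulness of $\iota^\ast$ follows from the theorem on formal functions (finiteness of coherent cohomology over $A$ plus a Milnor-sequence comparison); and essential surjectivity proceeds by Chow's lemma for stacks (\`a la Olsson/Rydh), pulling back to a proper surjection from an algebraic space, applying Knutson's GAGA there, and closing by Noetherian d\'evissage on supports. Two minor caveats on your sketch: the map $X' \to \mathscr{X}$ produced by the stacky Chow's lemma is only generically flat (or only proper and representable over a dense open), not an isomorphism over a dense open, and the d\'evissage needs to be phrased accordingly; and the projection-formula/pushforward bookkeeping on the lisse-\'etale site really does require the Hall--Rydh or Laszlo--Olsson cohomological-descent machinery, not just care, since $\pi^{-1}$ is not exact on the lisse-\'etale topos. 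But since the paper elides all of this by citation, these points are about \cite{ref:conrad}, not about anything in the present paper.
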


The first arrow is given by sending a coherent sheaf $\mathscr{F}$ on $\mathscr{X}$ to $\iota^\ast \mathscr{F} \simeq \lim \mathscr{F}/ \mathscr{I}^{n+1}\mathscr{F}$, while the second arrow is given by sending a coherent sheaf $\mathscr{G}$ on $\widehat{\hspace{0pt}\mathscr{X}}$ to the adic system $(\mathscr{G}/\mathscr{I}^{n+1} \mathscr{G})_n$.
\medskip

\begin{remark} \label{rem:formalgaga}
In the case that $\mathscr{X} = \spec A$, we have $\operatorname{Coh}(\widehat{\spec A}) = \operatorname{Coh}(\operatorname{Spf} A)$ by \cite[Remark 1.6]{ref:conrad}. Therefore, the proposition above says that 
\[ \operatorname{Coh}(\spec A) \stackrel{\sim}{\to} \operatorname{Coh}(\operatorname{Spf} A) \stackrel{\sim}{\to} \lim \operatorname{Coh}(\spec A/I^{n+1}).\]
It is tempting to think that given an adic system of finite $A/I^{n+1}$-modules $M_n$, the corresponding coherent sheaf on $\spec A$ is $\lim \widetilde{M_n}$. This however is false.  Let $A$ be a complete DVR with uniformizer $\varpi$. The sheaf $\lim \widetilde{A/\varpi^n   }$ on $\spec A$ is not quasi-coherent because its global sections are $A$ while its value on the basic open $D(\varpi)$ is zero. The issue is that the functor sending an $A$-module  $M$ to its associated quasi-coherent sheaf $\widetilde{M} $ does not commute with limits. However, notice that $\lim \widetilde{A/\varpi^n   }$  \textit{is} a coherent sheaf on $\operatorname{Spf} A$. 
\end{remark}

We can now prove the following theorem:

\begin{theorem}[Formal GAGA for pseudo-coherent complexes] \label{thm:gagapc}
Let $\mathscr{X}$ be an algebraic stack that is proper over an $I$-adically complete Noetherian ring $A$. The flat morphism of sites $\iota \colon \widehat{\hspace{0pt}\mathscr{X}} \to \mathscr{X}$ induces an equivalence of categories
\[ \iota^\ast \colon D^-_{\operatorname{coh}}(\mathscr{X}) \stackrel{\sim}{\to} D^-_{\operatorname{coh}}(\widehat{\hspace{0pt}\mathscr{X}}).\]
\end{theorem}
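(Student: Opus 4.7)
The plan is to apply Theorem \ref{thm:genpc} directly to the flat morphism of ringed sites $\iota \colon (\widehat{\hspace{0pt}\mathscr{X}}, \widehat{\mathscr{O}}_\mathscr{X}) \to (\mathscr{X}, \mathscr{O}_\mathscr{X})$, taking $\mathscr{A}_X = \operatorname{Coh}(\mathscr{X})$, $\mathscr{A}_Y = \operatorname{Coh}(\widehat{\hspace{0pt}\mathscr{X}})$, and both prebases $\mathscr{B}_X = \mathscr{B}_Y$ equal to the class of smooth affines over $\mathscr{X}$. By Remark \ref{rem:pc}, pseudo-coherence agrees with bounded-above-with-coherent-cohomology on both sides, so the equivalence produced by Theorem \ref{thm:genpc} will be exactly the one claimed. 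Conditions (i) and (iii) come essentially for free: (i) is the content of Theorem \ref{thm:conradgaga}, and (iii) is immediate because $\iota$ is the identity on objects of the underlying lisse-\'etale site, so $u_\iota(U) = U$ automatically lies in $\mathscr{B}_Y$.

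For condition (iv) I would argue as follows. A smooth affine $V \to \mathscr{X}$ has an affine formal completion $\widehat{V}$, and the restriction of $G \in \operatorname{Coh}(\widehat{\hspace{0pt}\mathscr{X}})$ to $V$ corresponds under Theorem \ref{thm:conradgaga} to an adic system of coherent sheaves on the thickenings $V_n$. Higher lisse-\'etale cohomology of a coherent sheaf on an affine scheme agrees with its Zariski cohomology and vanishes, so the Milnor exact sequence
\[ 0 \to R^1\lim H^{p-1}(V, G/\mathscr{I}^{n+1}G) \to H^p(V, G) \to \lim H^p(V, G/\mathscr{I}^{n+1}G) \to 0 \]
reduces the vanishing of $H^p(V,G)$ for $p>0$ to the vanishing of $R^1\lim$-terms for adic systems of coherent sheaves on the affine $V$, which follows from Mittag--Leffler (the transition maps are surjective).

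The main obstacle is condition (ii), the Ext comparison
\[ \operatorname{Ext}^n_{\mathscr{O}_\mathscr{X}}(F,F') \xrightarrow{\sim} \operatorname{Ext}^n_{\widehat{\mathscr{O}}_\mathscr{X}}(\iota^\ast F, \iota^\ast F') \]
for $F,F' \in \operatorname{Coh}(\mathscr{X})$. The approach I would follow is to rewrite both sides as hypercohomology of sheaf-Hom complexes,
\[ \operatorname{Ext}^n_{\mathscr{O}_\mathscr{X}}(F,F') = H^n(\mathscr{X}, R\uhom_{\mathscr{O}_\mathscr{X}}(F,F')), \quad \operatorname{Ext}^n_{\widehat{\mathscr{O}}_\mathscr{X}}(\iota^\ast F, \iota^\ast F') = H^n(\widehat{\hspace{0pt}\mathscr{X}}, R\uhom_{\widehat{\mathscr{O}}_\mathscr{X}}(\iota^\ast F, \iota^\ast F')), \]
and then identify $\iota^\ast R\uhom_{\mathscr{O}_\mathscr{X}}(F,F') \cong R\uhom_{\widehat{\mathscr{O}}_\mathscr{X}}(\iota^\ast F, \iota^\ast F')$ using flatness of $\iota$ together with pseudo-coherence of $F$ (which, after replacing $F$ smooth-locally by a bounded-above complex of finite free modules, makes the projection formula apply degreewise). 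Since $\mathscr{X}$ is locally Noetherian and $F,F'$ are coherent, $R\uhom_{\mathscr{O}_\mathscr{X}}(F,F')$ lies in $D^+_{\operatorname{coh}}(\mathscr{X})$.

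It then remains to show that the natural map $H^n(\mathscr{X},K) \to H^n(\widehat{\hspace{0pt}\mathscr{X}}, \iota^\ast K)$ is an isomorphism for every $K \in D^+_{\operatorname{coh}}(\mathscr{X})$ and every fixed $n \in \mathbf{Z}$. I would reduce this to a single coherent sheaf via a brutal truncation and the induced hypercohomology spectral sequence, at which point it is precisely the cohomology comparison established in the course of proving Theorem \ref{thm:conradgaga}. With all four hypotheses of Theorem \ref{thm:genpc} verified, the desired equivalence follows. The most delicate point — and the one I expect to absorb most of the work — is carefully checking the derived-sheaf-Hom compatibility with $\iota^\ast$ at the level of pseudo-coherent complexes on the lisse-\'etale site, since $\iota$ is not a morphism of topoi in the usual geometric sense and local finite-free resolutions have to be handled via the prebasis of smooth affines.
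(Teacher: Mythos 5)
Your strategy is identical to the paper's: apply Theorem \ref{thm:genpc} to $\iota$, with $\mathscr{A}_X = \operatorname{Coh}(\mathscr{X})$, $\mathscr{A}_Y = \operatorname{Coh}(\widehat{\hspace{0pt}\mathscr{X}})$, and prebasis the smooth affines, using Remark \ref{rem:pc} to identify pseudo-coherence with bounded-above-with-coherent-cohomology. Your handling of conditions (i) and (iii) matches the paper's exactly (Theorem \ref{thm:conradgaga}, and the identity functor on objects). The divergence is that the paper disposes of conditions (ii) and (iv) by citation — condition (ii) is \cite[Lemma 4.3]{ref:conrad}, and condition (iv) is Lemma \ref{lem:vanishinget} from the appendix — whereas you sketch self-contained proofs of both. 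Your argument for (iv) (Milnor exact sequence plus Mittag--Leffler for the adic system on the smooth affine $V$) is precisely the content of Lemma \ref{lem:vanishinget}; note you should say explicitly that you reduce from lisse-\'etale to small \'etale cohomology on the affine $V$, as the proof of Proposition \ref{prop:pctopapp} does, since the lemma is stated for $\widehat{X}_{\et}$. For condition (ii), the route you outline (rewriting Ext as hypercohomology of sheaf-Hom, identifying $\iota^\ast R\uhom(F,F')$ via flatness and local finite-free resolutions, then comparing hypercohomology of a bounded-below pseudo-coherent complex on $\mathscr{X}$ and $\widehat{\hspace{0pt}\mathscr{X}}$ by brutal truncation) is essentially what Conrad's Lemma 4.3 does, and it is the right argument. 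You are correct that this is the delicate step; the paper avoids expanding it precisely because it is already carried out in \cite{ref:conrad}. So: same skeleton, but you rebuild two black boxes that the paper treats as citations, and your rebuilds are sound.
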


\begin{proof}
We will apply Theorem \ref{thm:genpc} to the morphism of sites $\iota \colon \widehat{\hspace{0pt}\mathscr{X}} \to \mathscr{X}$. We will take smooth morphisms $U \to \mathscr{X}$, where $U$ is affine as a prebasis for the topology on $\mathscr{X}$, and similarly for $\widehat{\hspace{0pt}\mathscr{X}}$ (the underlying sites of $\mathscr{X}$ and $\widehat{\hspace{0pt}\mathscr{X}}$ are the same). The weak Serre subcategories in question are $\operatorname{Coh}(\widehat{\hspace{0pt}\mathscr{X}})$ \cite[Corollary 1.7]{ref:conrad} and $\operatorname{Coh}(\mathscr{X})$\footnote{ The careful reader will note here that it is important to use the lisse-\'{e}tale topology. In the big fppf site, the inclusion functor $\operatorname{Coh}(\mathscr{X}) \subseteq \operatorname{Mod}(\mathscr{O}_{\mathscr{X}})$ is not exact.}. Conditions (\ref{heart}) and (\ref{ext}) are respectively Theorem \ref{thm:conradgaga} above and \cite[Lemma 4.3]{ref:conrad}. Condition (\ref{vanishing}) is Lemma \ref{lem:vanishinget} below. Condition (\ref{objects}) is trivial because the functor on objects $u_{\iota}\colon  \operatorname{Ob}(\mathscr{X}) \to \operatorname{Ob}(\widehat{\hspace{0pt}\mathscr{X}})$ is the identity functor. All the conditions of Theorem \ref{thm:genpc} are satisfied and we are done. \end{proof}

\section{Grothendieck's Existence Theorem for Pseudo-Coherent Complexes} \label{section:existencepc}

\subsection{Preparations}
Let $j \colon \mathscr{Y}\to \mathscr{X}$ be a  morphism of algebraic stacks. For any sheaf $F$ on $\mathscr{Y}$ and object $U$ of $\mathscr{X}_{\liset}$,  recall that $j_\ast F$ is the sheaf on $\mathscr{X}_{\liset}$ whose value on an object $U$ of $\mathscr{X}_{\liset}$ is $F(\mathscr{Y} \times_{\mathscr{X}} U).$ In addition,  also recall that for any sheaf $G$ on $\mathscr{X}_{\liset}$, $j^{-1}G$ is the sheafification of the presheaf whose value on $V \in \mathscr{Y}_{\liset}$ is 
\[ \varinjlim_{ V \to U}  F(U),\] 
where the limit is taken over all $U \in \mathscr{X}_{\liset}$ that fit into a commutative square 
\[ \begin{tikzcd} V \ar{r} \ar{d} & U \ar{d} \\ \mathscr{Y} \ar{r} & \mathscr{X}. \end{tikzcd} \] 
The functor $j^{-1}$ is left adjoint to $j_\ast$. However, it was observed by Gabber and Behrend that the functor $j^{-1}$ is \textit{not} exact (even if $\mathscr{X}$ and $\mathscr{Y}$ are schemes!) and therefore $j$ does not induce a morphism of ringed sites $\mathscr{Y}_{\liset} \to \mathscr{X}_{\liset}$. Hence, it is a  subtle question to construct a derived pullback on the level of derived categories of \textit{sheaves of modules} 
\[ Lj^\ast \colon D({\mathscr{X}}) \to D({\mathscr{Y}}).\]  On the other hand, the \textit{derived pushforward} \begin{equation} \label{eq:derivedpushforward} Rj_\ast \colon D(\mathscr{Y}) \to D(\mathscr{X}) \end{equation} always exists for formal reasons.  Indeed, 
\[ j_\ast \colon \operatorname{Mod}(\mathscr{O}_{\mathscr{Y}}) \to \operatorname{Mod}(\mathscr{O}_{\mathscr{X}}  )\]
 is a left exact functor between Grothendieck abelian categories, and therefore $Rj_\ast$ exists by \cite[Corollary 3.14]{ref:serpe}. Of course, work must be done to show that $Rj_\ast$ has good properties such as a Leray spectral sequence for cohomology, precisely because we do not know that $j_\ast$ preserves injectives\footnote{If $j^{-1}$ were exact, then $j_\ast$ would be right adjoint to an exact functor and therefore preserve injectives.}. Indeed, this is the content of \cite{ref:sheaves}.
 \medskip
 
To remedy the issue of the non-functoriality of the lisse-\'{e}tale site, Hall and Rydh \cite{ref:hallrydh} (building on work of \cite{ref:LO} on unbounded cohomological descent) have defined a surrogate functor 
\[ L(j_{\qc})^\ast \colon D_{\qcoh}({\mathscr{X}}) \to D_{\qcoh}(\mathscr{Y}).\]
We briefly recall the construction of this functor in the case that $j$ is representable, as this is the only situation we need. The general situation is no more difficult. Let $j \colon \mathscr{Y} \to \mathscr{X}$ be a representable morphism of algebraic stacks. Let $p: V \to \mathscr{X}$ be a smooth surjection from an algebraic space. Define $U \coloneqq V \times_\mathscr{X} \mathscr{Y}$ (which is an algebraic space by representability of $j$), and let 
$q \colon U \to \mathscr{Y}$ denote the corresponding smooth surjective morphism. 
Let $p_{\bullet} \colon V_\bullet \to \mathscr{X}$ and $q_\bullet \colon U_\bullet \to \mathscr{Y}$ denote the corresponding morphisms of simplicial algebraic spaces. Associated to each of these simplicial  algebraic spaces are certain ringed topoi $U^+_{\bullet, \text{\'{e}t}}$ and $ V^+_{\bullet, \text{\'{e}t}}$  \cite[Section 1.1]{ref:hallrydh}. Similarly, in the lisse-\'{e}tale topology we have  $U^+_{\bullet, \text{lis-\'{e}t}}$ and $V^+_{\bullet, \text{lis-\'{e}t}}$. These ringed topoi, together with the original $\mathscr{X}$ and $\mathscr{Y}$ sit in a $2$-commutative diagram 
\begin{equation} \label{eq:covering} \begin{tikzcd} \mathscr{Y} \ar{d}{j} &  U^+_{\bullet, \text{lis-\'{e}t}}  \ar[swap]{l}{q_\bullet^+}   \ar{d}{{j}^+_{\bullet,    \text{lis-\'{e}t}  }  } \ar{r}{\text{res}_U} & U^+_{\bullet, \text{\'{e}t}} \ar{d}{{j}^+_{\bullet, \text{\'{e}t}}   }   \\
\mathscr{X} & \ar{l}{p^+_\bullet} V^+_{\bullet, \text{list-\'{e}t}}  \ar[swap]{r}{\text{res}_V} & V^+_{\bullet, \text{\'{e}t}} .\end{tikzcd} \end{equation}
\medskip

The key idea to defining $L(j_{\qc})^\ast$ now is the following. By \cite[Ex. 2.2.5]{ref:LO}, we have equivalences of categories
\begin{eqnarray} \label{eq:un1} \begin{tikzcd}  D_{\qcoh}(\mathscr{Y}) & \ar[swap]{l}{ (q^+_{\bullet})_\ast} D_{\qcoh}( U^+_{\bullet, \text{lis-\'{e}t}}) \ar{r}{(\text{res}_U)_\ast }&  D_{\qcoh}( U^+_{\bullet, \text{\'{e}t}})    \end{tikzcd}    \\
\label{eq:un2} \begin{tikzcd}  D_{\qcoh}(\mathscr{X}) & \ar[swap]{l}{ (p^+_{\bullet})_\ast} D_{\qcoh}( V^+_{\bullet, \text{lis-\'{e}t}}) \ar{r}{(\text{res}_V)_\ast }&  D_{\qcoh}( V^+_{\bullet, \text{\'{e}t}}) .  \end{tikzcd}     \end{eqnarray}
On the other hand, functoriality of the \'{e}tale topos gives a pullback  $L (j^+_{\bullet, \text{\'{e}t}} )^\ast \colon D(V^+_{\bullet, \text{\'{e}t}}) \to D(U^+_{\bullet, \text{\'{e}t}})$ which restricts to $L ({j}^+_{\bullet, \text{\'{e}t}} )^\ast \colon D_{\qcoh} (V^+_{\bullet, \text{\'{e}t}}) \to D_{\qcoh}(U^+_{\bullet, \text{\'{e}t}})$. This allows us to define the functor $L(j_{\qc})^\ast$ as 
\[ L(j_{\qc})^\ast \coloneqq  R( q^+_{\bullet})_\ast \circ L( \operatorname{res}_U)^\ast  \circ     L ({j}^+_{\bullet, \text{\'{e}t}} )^\ast \circ  R(\operatorname{res}_V)_\ast  \circ L(p^+_{\bullet})^\ast.\]
It is readily checked that the definition of $L(j_{\qc})^\ast$ is independent of the choice of smooth cover of $\mathscr{X}$. We mention two important properties of this functor. The first property is that if $j$ is flat, then given any object $P \in D_{\qcoh}(\mathscr{X})$ and integer $n \in \mathbf{Z}$, we have $H^n(L(j_{\qc})^\ast P) \cong j^\ast H^n(P)$. The second property is the following. Suppose that $\mathscr{Y},\mathscr{X}$ are Deligne-Mumford stacks. The \'{e}tale topos is functorial and therefore there is a pullback  $Lj^\ast \colon D(\mathscr{X}_{\et}) \to D(\mathscr{Y}_{\et})$. Since the lisse-\'{e}tale and \'{e}tale topoi agree for Deligne-Mumford stacks, it follows we have a pullback $Lj^\ast \colon D(\mathscr{X}) \to D(\mathscr{Y})$. This pullback restricts to $D_{\qcoh}({\mathscr{X}}) \to D_{\qcoh}(\mathscr{Y})$ and agrees with $L(j_{\qc})^\ast$. 
\medskip

The following results will be used to prove the existence theorem in the next subsection.

\begin{proposition}
Let $ j \colon \mathscr{Y} \to \mathscr{X}$ be a morphism of locally Noetherian algebraic stacks. Then the functor $L(j_{\qc})^\ast \colon D_{\qcoh}(\mathscr{X}) \to D_{\qcoh}(\mathscr{Y})$ restricts to a functor  on pseudo-coherent complexes 
\[ L(j_{\qc})^\ast \colon D^-_{\coh}(\mathscr{X}) \to D^-_{\coh}(\mathscr{Y}).\]
\end{proposition}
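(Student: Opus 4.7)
The approach is to check pseudo-coherence smooth-locally and reduce to the corresponding statement for morphisms of algebraic spaces, where derived pullback is known to preserve pseudo-coherence. On a locally Noetherian algebraic stack, coherence of a quasi-coherent sheaf is smooth-local, and since smooth morphisms are flat (so their pullback is exact on modules) and locally of finite presentation, the same is true of the property ``bounded above with coherent cohomology'' for an object of $D_{\qcoh}(\mathscr{Y})$. So the first step is to reduce to showing that for some smooth surjection $q \colon U \to \mathscr{Y}$ from an algebraic space, $q^\ast L(j_{\qc})^\ast P$ lies in $D^-_{\coh}(U)$.

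Next, I would pick a smooth surjection $p \colon V \to \mathscr{X}$ from an algebraic space and use it to realize an atlas of $\mathscr{Y}$: in the representable case one takes $U = V \times_{\mathscr{X}} \mathscr{Y}$, and in general one uses the simplicial covers of (\ref{eq:covering}) appearing in the construction of $L(j_{\qc})^\ast$, the argument being essentially the same because coherence can be tested at the $0$-simplicial level. Either way one obtains a morphism $\tilde{j}$ of algebraic spaces with $p \circ \tilde{j} \simeq j \circ q$. Tracing through the definition
\[ L(j_{\qc})^\ast = R(q^+_{\bullet})_\ast \circ L(\operatorname{res}_U)^\ast \circ L(j^+_{\bullet,\et})^\ast \circ R(\operatorname{res}_V)_\ast \circ L(p^+_{\bullet})^\ast, \]
pulling back along $q$ inverts the outermost $R(q^+_{\bullet})_\ast$ by means of the equivalences (\ref{eq:un1}) and (\ref{eq:un2}) from \cite{ref:LO}, giving a natural isomorphism
\[ q^\ast L(j_{\qc})^\ast P \;\simeq\; L\tilde{j}^\ast p^\ast P \]
in $D_{\qcoh}(U_{\et}) = D_{\qcoh}(U)$.

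Finally, I would invoke the algebraic-space case. Since $p$ is smooth between locally Noetherian algebraic spaces, it is flat and locally of finite presentation, hence $p^\ast P \in D^-_{\coh}(V)$. Unlike the non-functorial lisse-\'{e}tale situation, the morphism $\tilde{j}$ of \'{e}tale ringed topoi of algebraic spaces is an honest morphism of ringed sites, so its derived pullback preserves pseudo-coherence by the lemma immediately preceding the proposition (\cite[\href{https://stacks.math.columbia.edu/tag/08H4}{Tag 08H4}]{ref:stacks}). Therefore $L\tilde{j}^\ast p^\ast P \in D^-_{\coh}(U)$, and combined with the first step we conclude $L(j_{\qc})^\ast P \in D^-_{\coh}(\mathscr{Y})$.

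The main obstacle is the smooth base-change identification $q^\ast L(j_{\qc})^\ast P \simeq L\tilde{j}^\ast p^\ast P$. This is not formal, precisely because the lisse-\'{e}tale site is not functorial in morphisms of algebraic stacks, but it is built into the construction of $L(j_{\qc})^\ast$ by smooth descent. I would expect to make it precise by citing the relevant smooth base-change and independence-of-cover compatibilities established in \cite{ref:hallrydh}; once these are in hand, the remainder of the argument is routine.
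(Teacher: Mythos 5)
Your proposal follows essentially the same route as the paper: reduce to the observation that $L(j_{\qc})^\ast$ is built from genuine morphisms of (étale) ringed topoi, where derived pullback preserves pseudo-coherence, and that pseudo-coherence descends along the smooth-local equivalences. The paper's proof is a two-line version of this same idea; your more detailed tracing through the simplicial covers and the base-change identification $q^\ast L(j_{\qc})^\ast P \simeq L\tilde{j}^\ast p^\ast P$ makes explicit the compatibility the paper leaves implicit, but it is not a different argument.
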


\begin{proof}
From the construction of $L(j_{\qc})^\ast$, we see that it is constructed ``locally" at the level of the \'{e}tale topology, where it is given as a pullback arising from a morphism of ringed topoi. Since pseudo-coherent complexes are preserved under arbitrary pullback, we are done.
\end{proof}

\begin{proposition} \label{prop:qcliset}
Let $j \colon \mathscr{Y} \to \mathscr{X}$ be a representable, quasi-compact and quasi-separated morphism of algebraic stacks.  Then the restriction of $Rj_\ast$ to $D_{\qcoh}( {\mathscr{Y}})$ factors through $D_{\qcoh}({\mathscr{X}})$. Furthermore in this situation, $Rj_\ast = R(j_{\qc})_\ast$ where $Rj_\ast$ is the derived pushforward (\ref{eq:derivedpushforward}), and therefore we have an adjoint pair
\[  Rj_\ast \colon D_{\qcoh}(\mathscr{Y}) \leftrightarrows D_{\qcoh}(\mathscr{X})      \colon L(j_{\qc})^\ast.\]
\end{proposition}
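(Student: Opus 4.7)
The plan is to split the statement into two independent claims: (i) the derived pushforward $Rj_\ast$ carries $D_{\qcoh}(\mathscr{Y})$ into $D_{\qcoh}(\mathscr{X})$; and (ii) the resulting restriction agrees with the Hall--Rydh functor $R(j_{\qc})_\ast$. Granted these, the adjoint pair is automatic: by construction in \cite{ref:hallrydh}, $L(j_{\qc})^\ast$ is a left adjoint to $R(j_{\qc})_\ast$ on the quasi-coherent subcategories, hence also to $Rj_\ast$.

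For (i), I would argue smooth-locally on $\mathscr{X}$. Fix a smooth surjection $p \colon V \to \mathscr{X}$ with $V$ a qcqs scheme, and form the fibre product $U \coloneqq V \times_{\mathscr{X}} \mathscr{Y}$ together with the base change $j_V \colon U \to V$. Since $j$ is representable, quasi-compact and quasi-separated, $U$ is a qcqs algebraic space and $j_V$ is a qcqs morphism of algebraic spaces. The key input is smooth (hence flat) base change: for any $\mathscr{F} \in D_{\qcoh}(\mathscr{Y})$, the natural base-change map identifies $p^\ast Rj_\ast \mathscr{F}$ with $R(j_V)_\ast (p')^\ast \mathscr{F}$. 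The right-hand side is quasi-coherent by the standard pushforward theorem for qcqs morphisms of algebraic spaces (cf. \cite[Tag 073K]{ref:stacks}), and since quasi-coherence on $\mathscr{X}$ can be detected after smooth pullback along a surjection, the conclusion lifts back to $\mathscr{X}$.

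For (ii), the plan is to unwind the explicit construction of $R(j_{\qc})_\ast$ through the diagram (\ref{eq:covering}) and the Laszlo--Olsson equivalences (\ref{eq:un1}), (\ref{eq:un2}). Concretely, $R(j_{\qc})_\ast$ is assembled from the honest \'{e}tale-topos pushforward $R(j^+_{\bullet,\et})_\ast$ at the simplicial level, sandwiched between the Laszlo--Olsson equivalences and the pushforwards along the smooth covers $p_\bullet^+,\, q_\bullet^+$. The naive lisse-\'{e}tale $Rj_\ast$ admits an analogous cohomological-descent description via $K$-injective resolutions along $V_\bullet^+ \to \mathscr{X}$, along the lines of \cite{ref:sheaves}. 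A diagram chase through (\ref{eq:covering}) then identifies the two functors on $D_{\qcoh}(\mathscr{Y})$; alternatively, once (i) is in hand, one may appeal to uniqueness of adjoints by checking, again smooth-locally, that $L(j_{\qc})^\ast$ is a left adjoint of $Rj_\ast$, which reduces the comparison to the level of algebraic spaces on the \'{e}tale site where the adjunction is elementary.

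The main obstacle I anticipate is (ii). Because the lisse-\'{e}tale pullback $j^{-1}$ is not exact (as emphasised by the remarks of Gabber and Behrend recalled above), one cannot dualize an obvious adjunction at the level of lisse-\'{e}tale modules, and both sides of $Rj_\ast = R(j_{\qc})_\ast$ must be unpacked through the simplicial/\'{e}tale model of cohomological descent from \cite{ref:LO} and \cite{ref:hallrydh}. Handling this comparison honestly, rather than attempting a formal shortcut at the level of lisse-\'{e}tale modules, is the technical heart of the argument.
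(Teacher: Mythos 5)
Your outline is heading in a defensible direction, but it is, in effect, an attempt to reprove the result you should be citing. The paper's proof is two lines: it observes that a representable (quasi-compact, quasi-separated) morphism is \emph{concentrated} in the sense of Hall--Rydh (\cite[Lemma 2.5(3)]{ref:hallrydh}), and then appeals to \cite[Theorem 2.6(2)]{ref:hallrydh}, which states precisely that for a concentrated morphism $j$ the lisse-\'etale derived pushforward $Rj_\ast$ preserves $D_{\qcoh}$ and coincides with $R(j_{\qc})_\ast$, yielding the adjoint pair. Your proposal never engages with the notion of ``concentrated,'' which is the load-bearing concept here: it is what controls cohomological dimension and makes the \emph{unbounded} derived statement work. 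Absent it, your reduction in (i) to the qcqs pushforward theorem for algebraic spaces needs to be supplemented by a cohomological-dimension bound (which is what ``concentrated'' supplies), and your smooth base-change step for $Rj_\ast$ on the lisse-\'etale site is itself a nontrivial theorem from \cite{ref:sheaves} that you invoke without flagging.

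The more serious gap is in (ii), which you yourself identify as the technical heart and then leave unexecuted. The ``diagram chase through (\ref{eq:covering})'' is essentially the proof of \cite[Theorem 2.6]{ref:hallrydh}, and the alternative via uniqueness of adjoints is circular as stated: to deduce $Rj_\ast = R(j_{\qc})_\ast$ from uniqueness of right adjoints you would need to know in advance that $Rj_\ast$ restricted to $D_{\qcoh}$ \emph{is} right adjoint to $L(j_{\qc})^\ast$, but that adjunction is what part of the proposition asserts, and adjointness is not a smooth-local property that can simply be checked on an atlas. The correct and economical move is the one the paper makes: recognize the hypothesis as ``concentrated'' and cite Hall--Rydh.
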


\begin{proof}
A representable morphism is concentrated by \cite[Lemma 2.5 (3)]{ref:hallrydh}. The result follows from \cite[Theorem 2.6 (2)]{ref:hallrydh}.
\end{proof}

\begin{proposition} \label{prop:jail}
Let $j \colon \mathscr{Y} \to \mathscr{X}$ be a closed immersion of locally Noetherian algebraic stacks. Then there is a natural equivalence of categories
\[ \overline{j}^\ast \colon D^-_{\operatorname{coh}}(\mathscr{X}, j_\ast \mathscr{O}_{\mathscr{Y}}) \stackrel{\sim}{\to} D^-_{\operatorname{coh}}(\mathscr{Y})\]
\end{proposition}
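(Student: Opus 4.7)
The plan is to establish the equivalence by first proving an equivalence of the underlying abelian categories of sheaves, then restricting to coherent subcategories, and finally extending to bounded above derived categories. Specifically, I aim to show that the pushforward
\[ j_* \colon \operatorname{Mod}(\mathscr{O}_\mathscr{Y}) \stackrel{\sim}{\to} \operatorname{Mod}(\mathscr{X}, j_*\mathscr{O}_\mathscr{Y}) \]
is an exact equivalence of abelian categories; the functor $\overline{j}^*$ in the proposition is then (the derived version of) its quasi-inverse, restricted to coherent modules.

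For the abelian equivalence, the key point is that for a closed immersion $j$, the pushforward $j_*$ is exact on the lisse-\'{e}tale site. This is verified smooth-locally: choose a smooth surjection $p \colon U \to \mathscr{X}$ from a scheme, and form the base change $j_U \colon U \times_\mathscr{X} \mathscr{Y} \to U$, which is a closed immersion of locally Noetherian schemes along which pushforward is manifestly exact. Compatibility of $j_*$ with smooth base change then yields exactness on the lisse-\'{e}tale site. Full faithfulness of $j_*$ and the identification of its essential image with the full subcategory of modules killed by the ideal sheaf of $j$ (equivalently, with $\operatorname{Mod}(\mathscr{X}, j_*\mathscr{O}_\mathscr{Y})$) reduce similarly to the corresponding classical statements for closed immersions of schemes. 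The quasi-inverse $\overline{j}^*$ sends a $j_*\mathscr{O}_\mathscr{Y}$-module $F$ on $\mathscr{X}$ to the unique $\mathscr{O}_\mathscr{Y}$-module whose pushforward recovers $F$, constructed smooth-locally and glued to a lisse-\'{e}tale sheaf.

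Restricting to coherent subcategories is immediate: $j$ is finite so $j_*$ preserves coherence, and the quasi-inverse preserves coherence by the same smooth-local reduction. Since both $j_*$ and its inverse are exact at the abelian level, they extend termwise to mutually quasi-inverse triangulated equivalences on bounded above derived categories with coherent cohomology, yielding the desired equivalence $\overline{j}^* \colon D^-_{\coh}(\mathscr{X}, j_*\mathscr{O}_\mathscr{Y}) \stackrel{\sim}{\to} D^-_{\coh}(\mathscr{Y})$. The main subtlety will be justifying the exactness claims on the lisse-\'{e}tale site given that $j^{-1}$ is emphatically not exact on arbitrary lisse-\'{e}tale modules, as noted earlier in the excerpt; but when one restricts to the subcategory of $j_*\mathscr{O}_\mathscr{Y}$-modules, the relevant quasi-inverse does become exact via the smooth-local analysis above, so the passage to derived categories goes through cleanly.
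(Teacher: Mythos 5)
Your approach is genuinely different from the paper's. You aim to prove the abelian-level statement that $j_\ast \colon \operatorname{Mod}(\mathscr{O}_{\mathscr{Y}}) \to \operatorname{Mod}(\mathscr{X}, j_\ast\mathscr{O}_{\mathscr{Y}})$ is an exact equivalence (for \emph{all} lisse-\'{e}tale modules, not just quasi-coherent ones) and then derive. The paper instead cites \cite[Corollary 2.7]{ref:hallrydh}, which establishes the analogous equivalence directly at the level of $D_{\qcoh}$ via the cohomological-descent machinery of simplicial \'{e}tale topoi, and then only has to verify a local coherence statement to restrict to $D^-_{\coh}$. The two functors ought to coincide in the end, but the constructions are quite different: yours is the quasi-inverse of an exact pushforward; the paper's is a pullback built through simplicial resolutions.

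That said, there is a real gap in your argument exactly where the paper's citation is doing the heavy lifting. The exactness and full faithfulness of $j_\ast$ can be checked on small \'{e}tale sites $U_{\et}$ for $U$ smooth over $\mathscr{X}$, as you say, because $(j_\ast F)|_{U_{\et}}$ agrees with the pushforward along the closed immersion $U\times_{\mathscr{X}}\mathscr{Y} \hookrightarrow U$ of algebraic spaces. But essential surjectivity --- the claim that a $j_\ast\mathscr{O}_{\mathscr{Y}}$-module on $\mathscr{X}_{\liset}$ is $j_\ast$ of something --- is the crux, and "constructed smooth-locally and glued to a lisse-\'{e}tale sheaf" is not a proof. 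You need that smooth morphisms lift (smooth-locally, not globally) along a closed immersion, so that objects of the form $\mathscr{Y}\times_{\mathscr{X}} U$ with $U\in\mathscr{X}_{\liset}$ form a prebasis of $\mathscr{Y}_{\liset}$; you then have to verify that the tentative assignment $V_U = \mathscr{Y}\times_{\mathscr{X}}U \mapsto F(U)$ is well-defined (independent of the choice of $U$ with given $V_U$), that it extends to a sheaf satisfying descent for arbitrary lisse-\'{e}tale covers of a general $V\in\mathscr{Y}_{\liset}$, and that the restriction maps along non-cartesian morphisms in $\mathscr{Y}_{\liset}$ are correctly induced. None of this is automatic, precisely because the lisse-\'{e}tale topos is not functorial; a lisse-\'{e}tale sheaf is not recoverable from its restrictions to the small \'{e}tale sites of a cover. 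This is not an unfixable error --- the required lifting lemma and gluing can be carried out (and your claim is plausibly even stronger than what the paper proves, since you work with all modules rather than quasi-coherent ones) --- but as written the proposal defers the entire content of the proposition to a gluing step you have not executed, whereas the paper discharges it by citing the Hall--Rydh result.
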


\begin{proof}
Pick a a diagram as in (\ref{eq:covering}). Then the morphism of ringed topoi $j^+_{\bullet, \text{\'{e}t}} \colon U^+_{\bullet, \text{\'{e}t}} \to V^+_{\bullet, \text{\'{e}t}}  $ factors as 
\[  U^+_{\bullet, \text{\'{e}t}} \stackrel{\overline{j}^+_{\bullet, \text{\'{e}t}}}{\longrightarrow} (V^+_{\bullet, \text{\'{e}t}}, (j^+_{\bullet, \text{\'{e}t}} )_\ast \mathscr{O}_{  U^+_{\bullet, \text{\'{e}t}}  } ) \stackrel{k^+_{\bullet, \text{\'{e}t}}    }{\longrightarrow} V^+_{\bullet, \text{\'{e}t}}  .   \]
Now in \cite[Corollary 2.7]{ref:hallrydh}, it is shown that 
\[ (\overline{j}^+_{\bullet, \text{\'{e}t}})^\ast \colon D_{\qcoh}(V^+_{\bullet, \text{\'{e}t}}, (j^+_{\bullet, \text{\'{e}t}})_\ast \mathscr{O}_ {U^+_{\bullet, \text{\'{e}t}}} ) \to D_{\qcoh}(U^+_{\bullet, \text{\'{e}t}})      \]
is an equivalence of categories. Therefore by (\ref{eq:un1}) and (\ref{eq:un2}) this induces an equivalence $\overline{j}^\ast$ of triangulated categories
\[ \overline{j}^\ast \colon D_{\operatorname{qcoh}}(\mathscr{X}, j_\ast \mathscr{O}_{\mathscr{Y}}) \stackrel{\sim}{\to} D_{\operatorname{qcoh}}(\mathscr{Y}).\] 
To complete the proof, we must show that $\overline{j}^\ast$ descends to an equivalence at the level of pseudo-coherent complexes. From the construction of $\overline{j}$, it is clear that it restricts to a fully faithful functor
\[ \overline{j}^\ast \colon D^-_{\operatorname{coh}}(\mathscr{X}, j_\ast \mathscr{O}_{\mathscr{Y}}) \stackrel{}{\to} D^-_{\operatorname{coh}}(\mathscr{Y}).\]
The formation of $\overline{j}^\ast$ commutes with cohomology and therefore to conclude it is enough to show the following. Let $F$ be any quasi-coherent sheaf on $(\mathscr{X}, j_\ast \mathscr{O}_{\mathscr{Y}})$. If  $\overline{j}^\ast F$ is coherent, then the same is true of $F$. Now the question of coherence is local and therefore we may reduce to the case that $\mathscr{X},\mathscr{Y}$ are affine schemes with the lisse-\'{e}tale topology. By descent, we reduce to the setting of the Zariski topology from which the result is clear. 
\end{proof}

\subsection{Grothendieck's Existence Theorem for Pseudo-Coherent Complexes}
Let $X$ be a scheme that is proper over an $I$-adically complete Noetherian ring $A$. Recall in this case that the existence theorem for coherent sheaves follows immediately from formal GAGA. Indeed, the theory of formal schemes tells us that the reduction map from $\operatorname{Coh}(\widehat{X})$ to the category of adic systems of coherent sheaves $F_n$ on $X_n \coloneqq X \times_A A/I^{n+1}$ is an equivalence of categories. However, in the situation of complexes on an algebraic stack $\mathscr{X}$, the non-functoriality of the lisse-\'{e}tale site means that extra care must be taken.  There are three issues we must clarify:
\begin{enumerate}
 
\item The notion of an adic system of complexes on $\mathscr{X}_n$.
\item The notion of an adic system of complexes on $\mathscr{X}_n$ algebraizing to a complex on $\mathscr{X}$.
\item The definition of a reduction functor $D^-_{\coh} (\widehat{\mathscr{\hspace{0ex} X}}) \to D^-_{\coh}(\mathscr{X}_n)$.
\end{enumerate} 
We address these as follows:

\begin{enumerate}
 
 \item For $\mathscr{X}$ an algebraic stack over an $I$-adically complete Noetherian ring $A$, define $\mathscr{X}_n \coloneqq \mathscr{X} \times_A A/I^{n+1}$. Let 
\begin{eqnarray*} i_{n,n+1} \colon &\mathscr{X}_n& \hookrightarrow \mathscr{X}_{n+1}\\
 j_n \colon &\mathscr{X}_n& \hookrightarrow \mathscr{X}\end{eqnarray*}
denote the canonical closed immersions of algebraic stacks. By Proposition \ref{prop:qcliset} $L(i_{n,n+1,\qc})^\ast$ is left adjoint to $R(i_{n,n+1})_\ast$. A \textit{system} of pseudo-coherent complexes $P_n$ on $\mathscr{X}_n$ is the data for every $n \geq 0$ an object $P_n \in D^-_{\operatorname{coh}}(\mathscr{X}_n)$ with maps
\begin{equation}  P_{n+1} \to R(i_{n, n+1})_\ast P_n. \label{eq:pushforward} \end{equation}
The system is said to be \textit{adic} if (\ref{eq:pushforward}) is adjoint to an isomorphism
\[ L({i_{n, n+1,\qc}})^\ast P_{n+1} \stackrel{\sim}{\to} P_n.\]

\item We say that an adic system of pseudo-coherent complexes $P_n$ on $\mathscr{X}_n$ \textit{algebraizes} to a pseudo-coherent complex $P$ on $\mathscr{X}$ if there exists an object $P \in D^-_{\coh}(\mathscr{X})$ such that $L(j_{n,\qc})^\ast P \simeq P_n$. 

\item  By Proposition  \ref{prop:jail} that there is a natural equivalence of categories
\[ \overline{j}_n^\ast \colon D^-_{\coh}(\mathscr{X}, j_{n,\ast} \mathscr{O}_{\mathscr{X}_n}  ) \stackrel{\sim}{\to} D^-_{\coh}(\mathscr{X}_n).\]
Therefore, it is enough to define a functor 
\[ D^-_{\coh}(\widehat{\hspace{0ex} \mathscr{X}}) \to D^-_{\coh}(\mathscr{X}, j_{n,\ast} \mathscr{O}_{\mathscr{X}_n}).\] However, because the underlying sites of $\widehat{\hspace{0ex} \mathscr{X}}$ and $(\mathscr{X}, j_{n,\ast} \mathscr{O}_{\mathscr{X}_n})$ are the same, we can simply define this functor by extension of scalars. In summary,  we define $D^-_{\coh} (\widehat{\mathscr{\hspace{0ex} X}}) \to D^-_{\coh}(\mathscr{X}_n)$ as the composition 
\begin{equation} \label{eq:relationship} \begin{tikzcd} D^-_{\coh}(\widehat{\hspace{0ex} \mathscr{X}}) \ar{rrr}{Q \mapsto Q \otimes^{\operatorname{L}}_{\widehat{\mathscr{O}}_{\mathscr{X}}} j_{n,\ast} \mathscr{O}_{\mathscr{X}_n}} &&&  D^-_{\coh}(\mathscr{X}, j_{n,\ast}\mathscr{O}_{\mathscr{X}_n}) \ar{r}{\overline{j}_n^\ast} &  D^-_{\coh}(\mathscr{X}_n). \end{tikzcd} \end{equation}

\end{enumerate}

\noindent The proposition below makes precise the relationship between (\ref{eq:relationship})
and  the pullback \[ L(j_{n,\qc})^\ast \colon D^-_{\coh}(\mathscr{X}) \to D^-_{\coh}(\mathscr{X}_n).\] obtained from \cite{ref:hallrydh}.
 
 \begin{proposition} \label{prop:commute}
Let $j \colon \mathscr{Y} \to \mathscr{X}$ be a closed immersion of locally Noetherian algebraic stacks. There is a 2-commutative diagram

 \[ \begin{tikzcd}
D_{\qcoh}(\mathscr{X}) \ar{rrr}{L(j_{\qc})^\ast} &  && D_{\qcoh}(\mathscr{Y}) \\
D^-_{\coh}(\mathscr{X}) \ar{r}  \ar[hook]{u} & D^-_{\coh}(\mathscr{X}, \widehat{\mathscr{O}}_{\mathscr{X}}) \ar{r} & D^-_{\coh}(\mathscr{X}, j_\ast \mathscr{O}_{\mathscr{Y}}) \ar{r}{\overline{j}^\ast}&  D^-_{\coh}(\mathscr{Y}) \ar[hook]{u} \\
\end{tikzcd} \]
where $\widehat{\mathscr{O}}_{\mathscr{X}}$ is the completion of $\mathscr{O}_{\mathscr{X}}$ with respect to the coherent ideal defining $\mathscr{Y}$, and the functor $\overline{j}^\ast$ is an equivalence by Proposition \ref{prop:jail}. The functor $D^-_{\coh}(\mathscr{X}) \to D^-_{\coh}(\mathscr{X}, \widehat{\mathscr{O}}_{\mathscr{X}}) $ is given by  extending scalars $P \mapsto P \otimes_{\mathscr{O}_{\mathscr{X}}} \widehat{\mathscr{O}}_{\mathscr{X}}.$ Similarly, $D^-_{\coh}(\mathscr{X}, \widehat{\mathscr{O}}_{\mathscr{X}}) \to D^-_{\coh}(\mathscr{X}, j_\ast \mathscr{O}_{\mathscr{Y}})$ is given by $Q \mapsto Q \otimes^{\operatorname{L}}_{\widehat{\mathscr{O}}_{\mathscr{X}}}  j_\ast \mathscr{O}_{\mathscr{Y}}.$

\end{proposition}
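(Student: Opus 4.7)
The plan is in two moves: first collapse the bottom row of the diagram to a single derived tensor product, then verify the resulting identification by descending a tautology from an \'{e}tale-simplicial atlas.

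\emph{Move 1: collapsing the bottom row.} Because $\iota \colon \widehat{\hspace{0ex}\mathscr{X}} \to \mathscr{X}$ is flat, the ordinary tensor product $P \otimes_{\mathscr{O}_{\mathscr{X}}} \widehat{\mathscr{O}}_{\mathscr{X}}$ agrees with the derived tensor product $P \otimes^{\operatorname{L}}_{\mathscr{O}_{\mathscr{X}}} \widehat{\mathscr{O}}_{\mathscr{X}}$. Associativity of the derived tensor product then produces a natural isomorphism
\[(P \otimes_{\mathscr{O}_{\mathscr{X}}} \widehat{\mathscr{O}}_{\mathscr{X}}) \otimes^{\operatorname{L}}_{\widehat{\mathscr{O}}_{\mathscr{X}}} j_\ast \mathscr{O}_{\mathscr{Y}} \;\simeq\; P \otimes^{\operatorname{L}}_{\mathscr{O}_{\mathscr{X}}} j_\ast \mathscr{O}_{\mathscr{Y}}\]
in $D^-_{\coh}(\mathscr{X}, j_\ast \mathscr{O}_{\mathscr{Y}})$. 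This reduces the claimed $2$-commutativity to exhibiting a natural isomorphism
\[L(j_{\qc})^\ast P \;\simeq\; \overline{j}^\ast\bigl(P \otimes^{\operatorname{L}}_{\mathscr{O}_{\mathscr{X}}} j_\ast \mathscr{O}_{\mathscr{Y}}\bigr)\]
in $D^-_{\coh}(\mathscr{Y})$, functorially in $P \in D^-_{\coh}(\mathscr{X})$.

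\emph{Move 2: simplicial descent.} I would build this isomorphism using the construction of $L(j_{\qc})^\ast$ recalled before the statement. Choose a smooth surjection $p \colon V \to \mathscr{X}$ with $V$ an affine scheme and set $U \coloneqq V \times_{\mathscr{X}} \mathscr{Y}$, with induced smooth surjection $q \colon U \to \mathscr{Y}$; since $j$ is a closed immersion, $U$ is affine and $U \hookrightarrow V$ is a closed immersion of affine schemes. Form the simplicial covers $p^+_\bullet$, $q^+_\bullet$ as in diagram (\ref{eq:covering}). On the \'{e}tale side, $j^+_{\bullet,\et} \colon U^+_{\bullet,\et} \to V^+_{\bullet,\et}$ is a bona fide morphism of ringed topoi arising from a closed immersion of simplicial affine schemes, so the standard factorization identity
\[L(j^+_{\bullet,\et})^\ast \widetilde{P} \;\simeq\; \overline{j}^{\,+,\ast}_{\bullet,\et}\bigl(\widetilde{P} \otimes^{\operatorname{L}}_{\mathscr{O}_{V^+_{\bullet,\et}}} (j^+_{\bullet,\et})_\ast \mathscr{O}_{U^+_{\bullet,\et}}\bigr)\]
holds, where $\widetilde{P}$ is the image of $P$ under the equivalence (\ref{eq:un2}). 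Transporting this identity back through the equivalences (\ref{eq:un1}) and (\ref{eq:un2}) and the global equivalence $\overline{j}^\ast$ of Proposition \ref{prop:jail}, and using the compatibility of $(\operatorname{res}_V)_\ast$ and $(p^+_\bullet)_\ast$ with derived tensor products against the structure sheaves, yields the desired global natural isomorphism.

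\emph{Main obstacle.} The chief technical nuisance is the bookkeeping imposed by having two distinct sheaves of rings $\mathscr{O}_{\mathscr{X}}$ and $j_\ast\mathscr{O}_{\mathscr{Y}}$ on the same underlying site $\mathscr{X}_{\liset}$, together with the shuttle between lisse-\'{e}tale and \'{e}tale topoi through the simplicial atlas. I would handle this uniformly by viewing $\overline{j}^\ast$ as restriction of scalars along the surjection $\mathscr{O}_{\mathscr{X}} \twoheadrightarrow j_\ast \mathscr{O}_{\mathscr{Y}}$ composed with the Hall--Rydh equivalence of Proposition \ref{prop:jail}, and adopting the analogous convention on each level of the simplicial cover; with that in place the identities to be checked reduce, at each stage of the descent, to the ordinary projection formula for a closed immersion of ringed topoi of affine schemes, which is standard.
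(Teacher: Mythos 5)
Your proposal is correct and takes essentially the same route as the paper: descend to the simplicial \'{e}tale atlas of diagram (\ref{eq:covering}), where $L(j_{\qc})^\ast$ is computed by $L(j^+_{\bullet,\et})^\ast$, and use the factorization of that morphism of ringed topoi through the intermediate topos $(V^+_{\bullet,\et}, (j^+_{\bullet,\et})_\ast \mathscr{O}_{U^+_{\bullet,\et}})$ recalled in Proposition \ref{prop:jail}. Your Move 1 — collapsing the two successive base changes along the bottom row to a single $P \otimes^{\operatorname{L}}_{\mathscr{O}_{\mathscr{X}}} j_\ast\mathscr{O}_{\mathscr{Y}}$ via flatness of $\mathscr{O}_{\mathscr{X}} \to \widehat{\mathscr{O}}_{\mathscr{X}}$ and associativity of the derived tensor product — spells out a step that the paper's terse ``the result follows'' leaves implicit, and is a sound addition.
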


\begin{proof}
Pick a a diagram as in (\ref{eq:covering}). Then we know that $L(j_{\qc})^\ast$ is given locally as $j^+_{\bullet, \text{\'{e}t}} \colon U^+_{\bullet, \text{\'{e}t}} \to V^+_{\bullet, \text{\'{e}t}}.$ As observed in Proposition \ref{prop:jail}, this factors as 
\[  U^+_{\bullet, \text{\'{e}t}} \stackrel{\overline{j}^+_{\bullet, \text{\'{e}t}}}{\longrightarrow} (V^+_{\bullet, \text{\'{e}t}}, (j^+_{\bullet, \text{\'{e}t}} )_\ast \mathscr{O}_{  U^+_{\bullet, \text{\'{e}t}}  } ) \stackrel{k^+_{\bullet, \text{\'{e}t}}    }{\longrightarrow} V^+_{\bullet, \text{\'{e}t}}  .   \]
The result follows.
\end{proof}

\begin{theorem}[Grothendieck existence for pseudo-coherent complexes] \label{thm:existencepc}
Let $\mathscr{X}$ that be an algebraic stack that is proper over an  $I$-adically complete Noetherian ring $A$. Define $\mathscr{X}_n \coloneqq \mathscr{X} \times_A A/I^{n+1}$. Let $P_n \in D^-_{\operatorname{coh}}(\mathscr{X}_n)$ be an adic system of pseudo-coherent complexes. Then there exists a pseudo-coherent complex $P \in D^-_{\operatorname{coh}}(\mathscr{X})$ such that $L(j_{n,\qc})^\ast P \simeq P_n$.
\end{theorem}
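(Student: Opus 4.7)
The plan is to construct a pseudo-coherent complex $\widehat{P}$ on $\widehat{\hspace{0ex}\mathscr{X}}$ whose reductions recover $\{P_n\}$, algebraize $\widehat{P}$ via formal GAGA (Theorem~\ref{thm:gagapc}), and then invoke Proposition~\ref{prop:commute} to obtain compatibility of the algebraization with the reduction functors $L(j_{n,\qc})^\ast$.  For the construction, I use the equivalence $\overline{j}_n^\ast$ of Proposition~\ref{prop:jail} to regard each $P_n$ as an object $P_n' \in D^-_{\coh}(\mathscr{X}, j_{n,\ast}\mathscr{O}_{\mathscr{X}_n}) \subseteq D(\widehat{\hspace{0ex}\mathscr{X}})$ (legitimate because $j_{n,\ast}\mathscr{O}_{\mathscr{X}_n} = \widehat{\mathscr{O}}_{\mathscr{X}}/\mathscr{I}^{n+1}$).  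The adic condition translates, via $\overline{j}_n^\ast$, into transition maps exhibiting $\{P_n'\}$ as an inverse system in $D(\widehat{\hspace{0ex}\mathscr{X}})$ with $P_m' \otimes^{\operatorname{L}}_{\widehat{\mathscr{O}}_{\mathscr{X}}} (\widehat{\mathscr{O}}_{\mathscr{X}}/\mathscr{I}^{n+1}) \simeq P_n'$ for $m \geq n$; set $\widehat{P} \coloneqq R\lim_n P_n'$.

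The first substantive task is to show $\widehat{P} \in D^-_{\coh}(\widehat{\hspace{0ex}\mathscr{X}})$.  The Milnor short exact sequence
\[ 0 \to R^1\lim_n H^{i-1}(P_n') \to H^i(\widehat{P}) \to \lim_n H^i(P_n') \to 0 \]
reduces this to two observations.  First, the base-change spectral sequence attached to $L(i_{n,n+1,\qc})^\ast P_{n+1} \simeq P_n$ identifies $\{H^i(P_n)\}_n$ with an adic system of coherent sheaves on $\{\mathscr{X}_n\}$, so Theorem~\ref{thm:conradgaga} promotes $\lim_n H^i(P_n')$ to a coherent sheaf on $\widehat{\hspace{0ex}\mathscr{X}}$ and Mittag--Leffler (Lemma~\ref{lem:abelianlim}) forces $R^1\lim_n H^{i-1}(P_n') = 0$.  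Second, since $\mathscr{I}/\mathscr{I}^{n+2}$ is nilpotent on $\mathscr{X}_n$ (so lies in the Jacobson radical of any smooth affine chart), Nakayama's lemma applied stalk-locally shows that the top non-vanishing cohomological degree of each $P_n$ equals that of $P_0$, giving the uniform upper bound needed for $\widehat{P}$ to lie in $D^-$.

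Having produced $\widehat{P} \in D^-_{\coh}(\widehat{\hspace{0ex}\mathscr{X}})$, Theorem~\ref{thm:gagapc} yields $P \in D^-_{\coh}(\mathscr{X})$ with $\iota^\ast P \simeq \widehat{P}$, and Proposition~\ref{prop:commute} then gives
\[ L(j_{n,\qc})^\ast P \;\simeq\; \overline{j}_n^\ast\bigl(\widehat{P} \otimes^{\operatorname{L}}_{\widehat{\mathscr{O}}_{\mathscr{X}}} j_{n,\ast}\mathscr{O}_{\mathscr{X}_n}\bigr). \]
It therefore suffices to show $\widehat{P} \otimes^{\operatorname{L}}_{\widehat{\mathscr{O}}_{\mathscr{X}}} j_{n,\ast}\mathscr{O}_{\mathscr{X}_n} \simeq P_n'$.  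After tensoring, the inverse system $\{P_m' \otimes^{\operatorname{L}} (\widehat{\mathscr{O}}_{\mathscr{X}}/\mathscr{I}^{n+1})\}_m$ is constant equal to $P_n'$ for $m \geq n$ by the adic condition; commuting the tensor product past $R\lim$ (controlled again by the Mittag--Leffler property, together with the uniform upper bound on cohomology established in Step~2) yields $P_n'$, hence $L(j_{n,\qc})^\ast P \simeq P_n$.

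The main obstacle is the pseudo-coherence of $\widehat{P}$.  Uniform boundedness above is non-trivial and relies essentially on $A$ being $I$-adically complete, so that $I$ lies in the Jacobson radical and Nakayama can be invoked.  The coherence of the cohomology sheaves requires turning the derived adic condition on $\{P_n\}$ into an honest adic system of coherent sheaves, which is the step where the base-change spectral sequence and $I$-adic Mittag--Leffler both play their decisive role; the subsequent algebraization and reduction check are formal consequences of Theorems~\ref{thm:gagapc} and~\ref{thm:conradgaga} together with Proposition~\ref{prop:commute}.
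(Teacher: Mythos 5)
Your overall skeleton is the same as the paper's: pass to the formal side, take $\widehat{P} = R\lim P_n'$, prove it is pseudo-coherent, algebraize via Theorem~\ref{thm:gagapc}, and check reductions via Proposition~\ref{prop:commute}. But the central step --- pseudo-coherence of $\widehat{P}$ --- contains a genuine error.

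You assert that "the base-change spectral sequence attached to $L(i_{n,n+1,\qc})^\ast P_{n+1} \simeq P_n$ identifies $\{H^i(P_n)\}_n$ with an adic system of coherent sheaves." This is false in general. The spectral sequence has $E_2$-page $\operatorname{Tor}^{\mathscr{O}_{\mathscr{X}_{n+1}}}_{-p}\bigl(H^q(P_{n+1}), \mathscr{O}_{\mathscr{X}_n}\bigr) \Rightarrow H^{p+q}(P_n)$, so for $i$ below the top nonvanishing degree the higher Tor terms contribute and the natural map $i^\ast_{n,n+1} H^i(P_{n+1}) \to H^i(P_n)$ is not an isomorphism. A concrete counterexample: over $A = k[[t]]$ take $P = [A \xrightarrow{t} A]$ in degrees $[-1,0]$ and $P_n = P \otimes^{\operatorname{L}}_A A/t^{n+1}$. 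Each $H^{-1}(P_n) \cong k$ is nonzero, yet the transition maps $H^{-1}(P_{n+1}) \to H^{-1}(P_n)$ vanish, so $\{H^{-1}(P_n)\}_n$ is not adic (an adic system with zero transition maps would be identically zero). Consequently you cannot invoke Theorem~\ref{thm:conradgaga} to promote $\lim_n H^i(P_n')$ to a coherent sheaf, nor can you conclude Mittag--Leffler for $\{H^{i-1}(P_n')\}_n$ this way, and the Milnor-sequence argument for coherence of $H^i(\widehat{P})$ collapses.

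The paper sidesteps precisely this difficulty. After reducing (via Proposition~\ref{prop:pctopapp}, Corollary~\ref{cor:appzaret}, and Remark~\ref{cor:applisetet}) to an affine Zariski situation, it does \emph{not} attempt to control cohomology sheaves directly. Instead it invokes the lifting lemma \cite[\href{https://stacks.math.columbia.edu/tag/0BCB}{Tag 0BCB}]{ref:stacks} to replace the adic system of derived-category objects $Q_k$ by an honest adic system of bounded-above complexes $F_k^\bullet$ of finite free modules, with genuine chain maps $F_{k+1}^\bullet \to F_k^\bullet$ satisfying $F_{k+1}^\bullet \otimes_{B_{k+1}} B_k = F_k^\bullet$. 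Then $R\lim Q_k = \lim F_k^\bullet$ termwise (each $\{F_k^i\}_k$ is adic, so $\lim F_k^i$ is coherent on $\operatorname{Spf}\widehat{B}$ and $R^1\lim F_k^i = 0$), the Nakayama bound on cohomological amplitude transfers from $F_0^\bullet$ to all $F_k^\bullet$, and pseudo-coherence of $\lim F_k^\bullet$ is read off from the bounded-above complex of coherent sheaves rather than from the cohomology of the individual $P_n$. Your Nakayama step for uniform boundedness above is fine, and the final commutation of $\otimes^{\operatorname{L}}$ past $R\lim$ is also essentially what the paper does (via Corollary~\ref{cor:appsheafification} and \cite[\href{https://stacks.math.columbia.edu/tag/0CQF}{Tag 0CQF}]{ref:stacks}), but the coherence argument as written does not go through without the complex-level lifting.
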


\begin{proof}
By formal GAGA (Theorem \ref{thm:gagapc}), Proposition \ref{prop:jail} and Proposition \ref{prop:commute}, we reduce to showing the following. Let $Q_n \in D^-_{\coh}(\mathscr{X}, j_{n,\ast} \mathscr{O}_{\mathscr{X}_n})$ be an adic system of pseudo-coherent complexes. Then there exists a pseudo-coherent complex $Q$ on $D^-_{\coh}(\widehat{\hspace{0ex} \mathscr{X}})$ such that 
\[Q \otimes_{\widehat{\mathscr{O}}_{\mathscr{X}}}^{\operatorname{L}}  j_{n,\ast} \mathscr{O}_{\mathscr{X}_n} = Q_n.\] Motivated by Remark \ref{rem:formalgaga}, we construct $Q$ as the derived limit $R\lim Q_k$. First, we show that $R\lim Q_k$ is bounded above with coherent cohomology as follows. As the question of coherence is local, we may assume that $\mathscr{X} = \spec B$ equipped with the lisse-\'{e}tale topology. By Proposition \ref{prop:pctopapp}, Corollary \ref{cor:appzaret} and Remark \ref{cor:applisetet}, we may assume that  each $Q_k$ is a pseudo-coherent object on the ringed site $(\spec B, j_{k,\ast} \mathscr{O}_{\spec B_k})$ in the Zariski topology. In fact, since $\operatorname{Coh}(\spec B, j_{k,\ast} \mathscr{O}_{\spec B_k}) \simeq \operatorname{Coh}(\spec B_k)$, we may reduce to the following situation. We have an adic system of pseudo-coherent objects $Q_k$ on $\spec B_k$ and we must prove that $R\lim Q_k$ is pseudo-coherent on $(\spec B, \widehat{\mathscr{O}}_{\spec B})$.

\medskip

Choose a bounded above complex of finite free $\mathscr{O}_{B_0}$-modules $F_0^{\bullet}$  and a quasi-isomorphism $F_0^{\bullet} \to Q_0$. By  \cite[\href{https://stacks.math.columbia.edu/tag/0BCB}{Tag 0BCB}]{ref:stacks} there is a bounded above complex of finite free $\mathscr{O}_{B_1}$-modules $F_1^{\bullet}$, a quasi-isomorphism $F_1^{\bullet} \to Q_1$ and a map of complexes(!) $F_1^{\bullet} \to F_0^{\bullet}$ lying over $Q_1 \to Q_0$ such that $F_1^{\bullet} \otimes_{B_1} B_0 = F_0^{\bullet}$. Continuing this procedure, we get an adic system of complexes $\{F_k^{\bullet}\}$ such that
\[ R\lim Q_k \stackrel{\sim}{\to} R\lim F_k^{\bullet}\]
in $D( \spec B, \widehat{\mathscr{O}}_{\spec B})$. For a fixed index $i$, the proof of Lemma \ref{lem:vanishinget} shows that $R\lim F_k^i = \lim F_k^i$ and therefore  $R\lim Q_k = \lim F_k^{\bullet}$. Now $F_0^{\bullet}$ is bounded above and by Nakayama's lemma, the cohomology of each $F_k^\bullet$ lives in the same interval as $F_0^\bullet$. This proves that  $\lim F_k^{\bullet}$ is  bounded above. Furthermore for each $i$, $\{ F_k^i\}$ is adic and therefore $\lim F_k^i$ is a coherent sheaf on $(\spec B, \widehat{\mathscr{O}}_{\spec B})$ by the theory of formal schemes. It follows that $\lim F_k^{\bullet}$ has coherent cohomology sheaves and therefore $R\lim Q_k$ is pseudo-coherent.
\medskip

Finally, we must show that $R\lim Q_k \otimes^{\operatorname{L}}_{\widehat{\mathscr{O}}_{\mathscr{X}}}  j_{n,\ast}\mathscr{O}_{\mathscr{X}_n} \stackrel{\sim}{\to} Q_n$. By Corollary \ref{cor:appsheafification} it is enough to prove for every smooth morphism $\spec B \to \mathscr{X}$ that 
\[R\Gamma(\spec B, R\lim Q_k) \otimes_{\widehat{B} }^{\operatorname{L}} B_n \stackrel{\sim}{\to} R\Gamma(\spec B, Q_n).\]
Now $R\Gamma$ commutes with $R\lim$ and therefore the result follows from \cite[\href{https://stacks.math.columbia.edu/tag/0CQF}{Tag 0CQF}]{ref:stacks}.
\end{proof}

\section{Formal GAGA and Grothendieck's Existence Theorem for Perfect Complexes} \label{section:existencep}

\subsection{Preliminary Definitions}
\begin{definition} Let $R$ be a Noetherian ring. A complex of $R$-modules $P^\bullet$ is said to be \textit{strictly perfect} if it is bounded  with each $P^i$ a finite projective module. Furthermore, we say that an object $P \in D(R)$ is  \textit{perfect} if it is quasi-isomorphic to a strictly perfect complex.
\end{definition}

\begin{definition} Let $R$ be a ring. An object $K \in D(R)$, is said to have finite Tor dimension if it has Tor-amplitude in some interval $[a,b]$. That is, for any $R$-module $M$, $ H^i(K \otimes_R^\text{L} M ) = 0$ for all $ i \notin [a,b]$.
\end{definition}

\begin{remark} An object $P \in D(R)$ is perfect if and only if it is pseudo-coherent and of finite Tor dimension \cite[\href{https://stacks.math.columbia.edu/tag/0658}{Tag 0658}]{ref:stacks}.\end{remark}

 As in the case of pseudo-coherence, the notions of being perfect and having finite Tor-dimension generalize to any ringed site $(C, \mathscr{O}_C   )$. A complex $P^{\bullet}$ of $\mathscr{O}_C$-modules is said to be \textit{perfect} if for any object $U$ there is a cover $\{U_i \to U\}$, strictly perfect complexes $P_i^{\bullet}$ and  quasi-isomorphisms of complexes $\alpha_i \colon P_i^{\bullet} \to P^{\bullet}|_{U_i}$. An object $P \in D(\mathscr{O}_C)$ is perfect if it is quasi-isomorphic to a perfect complex. Similarly, an object $K \in D(\mathscr{O}_C)$ has finite Tor-dimension if there is an interval $[a,b]$ such that $H^i(K \otimes_{\mathscr{O}_C}^{\text{L}} F) =0$ for all $\mathscr{O}_C$-modules $F$.
 
 \begin{remark}
 In the case that $C$ is the Zariski site of an affine scheme, and $K$ an object of $D_{\operatorname{qcoh}}(\mathscr{O}_C)$,  the two notions of being of finite Tor dimension (rings and ringed sites) agree  \cite[\href{https://stacks.math.columbia.edu/tag/08EA}{Tag 08EA}]{ref:stacks}.  This is not a priori obvious because in the definition of finite Tor dimension for sites, the test module $F$ is \textit{any} $\mathscr{O}_C$-module, not necessarily quasi-coherent.
 \end{remark}

\subsection{Affine Formal GAGA for Perfect Complexes} The goal of this subsection is to prove a theorem comparing perfect complexes on the Zariski and formal spectra of a $J$-adically complete Noetherian ring $B$. This will be used in the next subsection where we prove the  formal GAGA theorem for perfect complexes on a proper algebraic stack $\mathscr{X}.$ In order to state this theorem precisely, we need to recall some facts about coherent sheaves on  affine formal schemes. Let $B$ be a Noetherian ring, $J$ an ideal of $B$ and $\widehat{B}$ the $J$-adic completion of $B$. It is a fact that the canonical  flat map of ringed spaces $i \colon \operatorname{Spf} \widehat{B} \to \spec \widehat{B}$ gives rise to an equivalence of categories
\[ i^\ast \colon \operatorname{Coh}(\spec \widehat{B}) \stackrel{\sim}{\to} \operatorname{Coh}( \operatorname{Spf} \widehat{B}).\]

\noindent Furthermore, the inverse to $i^\ast$ is given by the global sections functor $\Gamma(\operatorname{Spf} \widehat{B}, -)$ (by abuse of notation we identify $\operatorname{Coh} (\spec \widehat{B})$ with finitely generated $\widehat{B}$-modules).  Now given coherent sheaves $F,F'$ on $\spec \widehat{B}$, the canonical map
\[ \operatorname{Ext}_{\spec \widehat{B}}^n( F,F') \stackrel{\sim}{\to} \operatorname{Ext}_{\operatorname{Spf} \widehat{B}}^n(i^\ast F, i^\ast F')\]
is an isomorphism for all $n\geq 0$ by \cite[III, 4.5.2]{ref:ega}. Therefore, the functor
\begin{equation} \label{eq:weakformalgaga}  i^\ast \colon D^-_{\operatorname{coh}}(\spec \widehat{B}) \stackrel{\sim}{\to} D^-_{\operatorname{coh}}(\operatorname{Spf} \widehat{B})\end{equation}
is an equivalence of categories  following the method of proof of Theorem \ref{thm:gagapc}.
 
\begin{theorem}[Affine formal GAGA for perfect complexes] \label{thm:weakformalgaga}
The equivalence of categories (\ref{eq:weakformalgaga}) restricts to an equivalence of categories
\[ i^\ast \colon \operatorname{Perf}( \spec \widehat{B}) \stackrel{\sim}{\to} \operatorname{Perf}(\operatorname{Spf} \widehat{B}).\]
\end{theorem}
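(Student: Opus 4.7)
The plan is to leverage the equivalence \eqref{eq:weakformalgaga} on pseudo-coherent objects and show that it restricts to perfect complexes. The functor $i^\ast$ restricted to $D^-_{\operatorname{coh}}$ is already fully faithful, and strictly perfect complexes on $\spec\widehat{B}$ pull back to strictly perfect complexes on $\operatorname{Spf}\widehat{B}$ (finite free modules pull back to finite free modules), so the first containment is automatic: $i^\ast$ restricts to a fully faithful functor $\operatorname{Perf}(\spec\widehat{B}) \to \operatorname{Perf}(\operatorname{Spf}\widehat{B})$. The content of the theorem is essential surjectivity: given $Q \in \operatorname{Perf}(\operatorname{Spf}\widehat{B})$, lift via \eqref{eq:weakformalgaga} to a pseudo-coherent $P \in D^-_{\operatorname{coh}}(\spec\widehat{B})$ with $i^\ast P \simeq Q$, and show that $P$ is itself perfect.

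The pivotal observation is that $J \subseteq \operatorname{Jac}(\widehat{B})$ by the standard geometric-series argument in a $J$-adically complete ring. Thus every maximal ideal $\mathfrak{m}$ of $\widehat{B}$ lies in $V(J) = |\operatorname{Spf}\widehat{B}|$, and the stalk map $\widehat{B}_{\mathfrak{m}} \to \mathscr{O}_{\operatorname{Spf}\widehat{B},\mathfrak{m}}$ is flat (from flatness of $i$) and local (morphisms of formal schemes are morphisms of locally ringed spaces), hence faithfully flat; in particular freeness of finitely generated modules descends along it.

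To prove $P$ is perfect, choose a quasi-isomorphism $F^\bullet \to P$ with $F^\bullet$ a bounded-above complex of finite free $\widehat{B}$-modules. Quasi-compactness of $|\operatorname{Spf}\widehat{B}|$ and perfectness of $Q$ give $Q$ a global Tor-amplitude in some interval $[-n,0]$; in particular $H^i(Q)=0$ for $i<-n$, so by formal GAGA for coherent sheaves (the affine case of Theorem \ref{thm:conradgaga}) the same holds for $F^\bullet$, whence $\tau_{\geq -n}F^\bullet \simeq P$. The standard fact that a bounded-above finite free resolution of a perfect complex is strictly perfect after truncating at its Tor-amplitude (applied stalk-by-stalk on $\operatorname{Spf}\widehat{B}$) shows that $\tau_{\geq -n}(i^\ast F^\bullet) \simeq Q$ is strictly perfect; in particular the cokernel $C := \operatorname{coker}(F^{-n-1} \to F^{-n})$ on $\spec\widehat{B}$ pulls back to a finite locally free sheaf $i^\ast C$ on $\operatorname{Spf}\widehat{B}$. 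At each maximal $\mathfrak{m}$, faithfully flat descent along $\widehat{B}_{\mathfrak{m}} \to \mathscr{O}_{\operatorname{Spf}\widehat{B},\mathfrak{m}}$ then yields $C_{\mathfrak{m}}$ free over $\widehat{B}_{\mathfrak{m}}$; since every non-maximal prime of $\widehat{B}$ specializes to a maximal one and localizations of free modules are free, $C_{\mathfrak{p}}$ is free for every $\mathfrak{p}$, so $C$ is finite projective. Hence $\tau_{\geq -n}F^\bullet$ is strictly perfect on $\spec\widehat{B}$, which proves that $P$ is perfect.

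The central technical hurdle is the faithful flatness of the stalk maps and the descent of finite local freeness along them; the rest of the argument is a standard truncation-and-descent routine mirroring the proof of Theorem \ref{thm:gagapc}.
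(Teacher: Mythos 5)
Your argument is correct, but it follows a genuinely different path from the paper's. The paper proves essential surjectivity by a clean reduction modulo $\widehat{J}$: it observes that $\spec\widehat{B}/\widehat{J}$ and $\operatorname{Spf}\widehat{B}/\widehat{J}$ are canonically isomorphic as ringed spaces, so the problem becomes purely commutative-algebraic --- given $P$ pseudo-coherent on $\spec\widehat{B}$ with $P\otimes^{\operatorname{L}}_{\widehat{B}}\widehat{B}/\widehat{J}$ perfect, show $P$ is perfect --- and then invokes Corollary \ref{cor:zarpair} for the Zariski pair $(\widehat{B},\widehat{J})$. You instead stay on the formal scheme and localize at maximal ideals, exploiting the faithfully flat local stalk maps $\widehat{B}_{\mathfrak{m}}\to\mathscr{O}_{\operatorname{Spf}\widehat{B},\mathfrak{m}}$ to descend projectivity of the truncated cokernel. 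Both arguments turn on the same underlying fact that $\widehat{J}$ lies in the Jacobson radical of $\widehat{B}$ so one can test at closed points; the paper routes this through the auxiliary Proposition \ref{lem:perfres} (proved by Milnor-sequence and $R\lim$ estimates in the complete local case), whereas you replace it by the elementary descent-of-freeness lemma along a flat local map. The paper's route has the advantage of isolating Corollary \ref{cor:zarpair} as a standalone, reusable statement --- it is invoked again in the proofs of Grothendieck existence for perfect and relatively perfect complexes --- while your route is self-contained for this theorem and avoids the $R\lim$ machinery, at the price of having to handle stalks of $\mathscr{O}_{\operatorname{Spf}\widehat{B}}$ directly (flatness of $i$ on stalks, locality of the stalk map). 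One small remark: you don't actually need $i^{\ast}C$ to be locally free as a sheaf; stalkwise freeness of $(i^{\ast}C)_{\mathfrak{m}}$, which is all the truncation argument gives directly, already suffices for the descent step.
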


The theorem above will follow from Corollary \ref{cor:zarpair} below, which morally asserts that the property of being perfect can be detected by base change to the residue field. Note it is important that we work with perfect complexes rather than just vector bundles, for otherwise the result is false. Indeed, over a Noetherian local ring $R$, the (ordinary) base change of any finite $R$-module $M$ to the residue field is finite free, but obviously not every finite module is projective.
 Corollary \ref{cor:zarpair} follows from \cite[\href{https://stacks.math.columbia.edu/tag/068V}{Tag 068V}]{ref:stacks} in the non-Noetherian case. Nonetheless, since we are in the Noetherian situation, we feel compelled to give a more intuitive proof, in the sense that we use standard d\'{e}vissage techniques.

\begin{lemma}
Let $(R,\mathfrak{m})$ be a Noetherian local ring and $P$ an object of $D(R)$. If the Tor amplitude of $P \otimes^{\operatorname{L}}_R R/\mathfrak{m}$ (as an object of $D(R/\mathfrak{m})$) lies in $[a,b]$, then for any $R$-module $N$ of finite length, 
\[ H^i(P \otimes_R^{\operatorname{L}} N) = 0\] 
for all $i \notin [a,b]$.
\end{lemma}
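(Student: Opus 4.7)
The plan is a standard d\'evissage argument, inducting on the length of $N$. First, I observe that since $R/\mathfrak{m}$ is a field, the hypothesis that $P \otimes^{\operatorname{L}}_R R/\mathfrak{m}$ has Tor amplitude in $[a,b]$ as an object of $D(R/\mathfrak{m})$ is equivalent to the vanishing statement $H^i(P \otimes^{\operatorname{L}}_R R/\mathfrak{m}) = 0$ for all $i \notin [a,b]$ (indeed, over a field every module is flat, so Tor amplitude and cohomological amplitude coincide). This handles the base case, namely $N = R/\mathfrak{m}$, which has length one.

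For the inductive step, suppose $N$ has length $\ell \geq 2$ and the result is known for modules of smaller length. Since $R$ is local Noetherian, $N$ admits a submodule $N' \subseteq N$ with $N/N' \cong R/\mathfrak{m}$, so there is a short exact sequence
\[ 0 \to N' \to N \to R/\mathfrak{m} \to 0 \]
with $N'$ of length $\ell - 1$. Applying $P \otimes^{\operatorname{L}}_R (-)$ gives an exact triangle in $D(R)$, whose long exact sequence of cohomology reads
\[ \cdots \to H^i(P \otimes^{\operatorname{L}}_R N') \to H^i(P \otimes^{\operatorname{L}}_R N) \to H^i(P \otimes^{\operatorname{L}}_R R/\mathfrak{m}) \to H^{i+1}(P \otimes^{\operatorname{L}}_R N') \to \cdots \]
For $i \notin [a,b]$, the outer terms vanish by the inductive hypothesis (for $N'$) and the base case (for $R/\mathfrak{m}$), so the middle term vanishes as well. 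This closes the induction and gives the desired conclusion.

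I do not expect a serious obstacle: the argument is a three-term long exact sequence squeeze, and the only subtlety is correctly unpacking the hypothesis on Tor amplitude relative to the residue field, which trivializes because $R/\mathfrak{m}$ is a field. The same strategy will presumably be used downstream, applying this lemma to deduce that perfectness over the residue field propagates to finite length modules and ultimately, via completeness and limits over $R/\mathfrak{m}^n$, to statements about $\operatorname{Spf}\widehat{B}$ versus $\operatorname{Spec}\widehat{B}$ in Theorem \ref{thm:weakformalgaga}.
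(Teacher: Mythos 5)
Your proof is correct and follows essentially the same route as the paper: induction on length, splitting off a quotient isomorphic to $R/\mathfrak{m}$, and squeezing with the long exact sequence from the resulting exact triangle. The only addition you make is to spell out explicitly why the base case reduces to cohomological vanishing over the residue field, which the paper leaves as "clear"; this is a correct and harmless elaboration, not a different argument.
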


\begin{proof}
We will induct on the length of $N$, denoted $\ell(N)$. When $\ell(N) = 1$, necessarily $N \cong R/\mathfrak{m}$ and the result is clear. When $\ell(N) > 1$, we can find a submodule $N_0 \subseteq N$ such that $\ell(N_0) < \ell(N)$ and $\ell(N/N_0) = 1$. The result follows by considering the cohomology of the exact triangle
\[ P \otimes_R^{\operatorname{L}} N_0 \to P \otimes_R^{\operatorname{L}} N \to P \otimes_R^{\operatorname{L}} N/N_0. \] \end{proof}

\begin{proposition} \label{lem:perfres}
Let $(R, \mathfrak{m})$ be a complete Noetherian local ring and $P$ a pseudo-coherent object in $D(R)$. Suppose that $P \otimes_R^{\operatorname{L}} R/\mathfrak{m}$ has Tor amplitude in $[a,b]$. Then $P$ has Tor amplitude in $[a,b]$.

\end{proposition}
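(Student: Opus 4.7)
The plan is to produce a particularly well-adapted representative of $P$ as a bounded complex of finite free $R$-modules, and then read off the Tor amplitude directly from its degrees of support. To this end, I would first invoke the standard construction of a \emph{minimal free resolution} over a Noetherian local ring: since $P$ is pseudo-coherent, its cohomology is finitely generated in each degree and vanishes above some $N$, so one builds inductively a bounded above complex $F^{\bullet}$ of finite free $R$-modules together with a quasi-isomorphism $F^{\bullet} \to P$, starting from a surjection $F^N \twoheadrightarrow H^N(P)$ that lifts a $k$-basis of $H^N(P) \otimes_R k$ and then, moving downward, choosing minimal generators of successive kernels via Nakayama. By construction, all differentials satisfy $d_F(F^i) \subseteq \mathfrak{m} F^{i+1}$.

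Once such an $F^{\bullet}$ is in hand, the hypothesis becomes easy to exploit. Because every $F^i$ is flat, $F^{\bullet} \otimes_R R/\mathfrak{m}$ computes $P \otimes^{\operatorname{L}}_R R/\mathfrak{m}$, and minimality of the differentials forces every differential of this reduction to vanish. Consequently $H^i(P \otimes_R^{\operatorname{L}} R/\mathfrak{m}) = F^i \otimes_R R/\mathfrak{m}$ for each $i$. The assumed Tor amplitude of $P \otimes_R^{\operatorname{L}} R/\mathfrak{m}$ in $[a,b]$ then gives $F^i \otimes_R R/\mathfrak{m} = 0$ for $i \notin [a,b]$, and Nakayama (equivalently, vanishing of the rank of a finite free module) forces $F^i = 0$ for $i \notin [a,b]$. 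Thus $F^{\bullet}$ is already a strictly perfect complex concentrated in degrees $[a,b]$, so $P \simeq F^{\bullet}$ evidently has Tor amplitude in $[a,b]$ against any $R$-module.

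The principal technical point is the construction of the minimal resolution in the first step: one must verify that $F^{\bullet}$ can be kept bounded above (which is guaranteed by the bounded-above cohomology of a pseudo-coherent $P$) and that each $F^i$ is finite free (which uses finite generation of every $H^j(P)$, again a consequence of pseudo-coherence over a Noetherian ring). The construction is completely standard and uses only the local Noetherian hypothesis; the completeness of $R$ is inherited from the ambient setting of the paper but is not actually invoked in the argument. Once the minimal $F^{\bullet}$ is in place, the remainder of the proof is a single application of Nakayama's lemma.
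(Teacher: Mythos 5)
Your argument is correct, and it is a genuinely different route from the paper's. The paper proves the result by d\'evissage and completion: an auxiliary lemma establishes, by induction on length, that $H^i(P \otimes^{\operatorname{L}}_R N) = 0$ for $i \notin [a,b]$ when $N$ is a finite-length $R$-module; for a general finite $R$-module $M$ one then uses completeness of $R$ to write $M = \lim M/\mathfrak{m}^k M$, identifies $F^\bullet \otimes_R M$ with $R\lim (F^\bullet \otimes_R M/\mathfrak{m}^k M)$, and kills the $R^1\lim$ term in the resulting Milnor sequence by a surjectivity argument at level $b$. Your approach instead replaces the resolution $F^\bullet$ by a \emph{minimal} one, observes that $F^\bullet \otimes_R k$ then has vanishing differentials so $H^i(P \otimes^{\operatorname{L}}_R k) \cong F^i \otimes_R k$, and reads off $F^i = 0$ for $i \notin [a,b]$ from the hypothesis plus Nakayama. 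This is shorter, sidesteps the limit and $R^1\lim$ bookkeeping entirely, does not actually use completeness of $R$ (only that $R$ is Noetherian local, as you observe), and yields the stronger conclusion that $P$ is represented by a strictly perfect complex concentrated in degrees $[a,b]$.

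The one place your write-up is imprecise is the construction of the minimal resolution. The recipe ``choose $F^N \twoheadrightarrow H^N(P)$ lifting a $k$-basis, then take minimal generators of successive kernels'' is the construction of a minimal free resolution of a single \emph{module}, and it does not directly make sense when $P$ is a genuine complex. The standard way to produce your $F^\bullet$ is to start from \emph{any} bounded-above complex of finite free $R$-modules quasi-isomorphic to $P$ (which pseudo-coherence supplies) and then, descending from the top nonzero degree, repeatedly split off contractible direct summands of the form $R \to R$ with the identity as differential, until every remaining differential lands in $\mathfrak{m}$ times its target. This decomposition of a bounded-above finite free complex into a minimal complex plus a contractible one is standard over a Noetherian local ring, so the gap is cosmetic rather than substantive; but the ``successive kernels'' phrasing should not be taken literally.
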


\begin{proof}
Choose a quasi-isomorphism $F^{\bullet} \to P$ where  $F^{\bullet}$ is a bounded above complex of finite free modules (such a $F^{\bullet}$ exists because $P$ is pseudo-coherent). The complex $F^\bullet$ is $K$-flat (in the sense of \cite{ref:spaltenstein}) and therefore can be used to compute derived tensor products. In other words, for any $R$-module $M$,
\[ P \otimes^{\operatorname{L}}_R M = F^{\bullet} \otimes_R M\]
in $D(R)$. We must now show that $H^i( F^{\bullet} \otimes_R M) = 0$ for all $i \notin [a, b]$. The formation of the (underived) tensor product commutes with direct limits and similarly for cohomology. Therefore, we may suppose that $M$ is finite, a fortiori complete and so
\[ F^{\bullet} \otimes_R M = F^{\bullet} \otimes_R \lim (M/\mathfrak{m}^kM) = \lim (F^\bullet \otimes_R M/\mathfrak{m}^kM ).\] 
On the other hand, by \cite[\href{https://stacks.math.columbia.edu/tag/091D}{Tag 091D}]{ref:stacks} we have
\[ R\lim (F^{\bullet} \otimes_R M/\mathfrak{m}^kM) = \lim (F^\bullet \otimes_R M/\mathfrak{m}^kM )\]
and so we can consider the Milnor exact sequence
\[ 0 \to R^1\lim H^{i-1}( F^{\bullet} \otimes_R M/\mathfrak{m}^kM) \to H^i (\lim F^{\bullet} \otimes_R M/\mathfrak{m}^kM) \to \lim H^i(F^{\bullet} \otimes_R M/\mathfrak{m}^kM) \to 0. \]
For every $k$, $M/\mathfrak{m}^kM$ is a finite length $R$-module and therefore the previous lemma implies the right term vanishes for all $i \notin [a,b]$ \textit{independent of $k$ and the module $M$}. The same is true of the left term for $i \notin [a+1, b+1]$. 

\medskip

To finish the proof, we have to show that  \[ R^1\lim H^{b}( F^{\bullet} \otimes_R M/\mathfrak{m}^kM)  = 0.\]
 There is a short exact sequence of complexes
\[  0 \to F^{\bullet} \otimes_R \mathfrak{m}^{k-1}M/\mathfrak{m}^{k}M \to  F^{\bullet} \otimes_R M/\mathfrak{m}^kM \to F^{\bullet} \otimes_R M/\mathfrak{m}^{k-1}M \to 0\]
which upon taking cohomology gives an exact sequence
\[\begin{tikzcd} H^b(F^{\bullet} \otimes_R M/\mathfrak{m}^kM) \ar{r} &  H^b(F^{\bullet} \otimes_R M/\mathfrak{m}^{k-1}M) \ar{r} & H^{b+1}( F^{\bullet} \otimes_R \mathfrak{m}^{k-1}M/\mathfrak{m}^{k}M ).  \end{tikzcd}\]
The right term is zero by the previous lemma since $\mathfrak{m}^{k-1}M/\mathfrak{m}^{k}M$ is an $R$-module of finite length. We have shown the inverse system $\{ H^b(F^{\bullet} \otimes_R M/\mathfrak{m}^kM)\}_k$ has all transition maps surjective and therefore 
\[R^1\lim H^{b}( F^{\bullet} \otimes_R M/\mathfrak{m}^kM)  = 0.\]
\end{proof}

\begin{corollary} \label{cor:zarpair}
Let $(R, I)$ be a Zariski pair, i.e., a Noetherian ring $R$ and an ideal $I$ contained in the Jacobson radical of $R$. Let $P$ a pseudo-coherent object of $D(R)$. If $P \otimes_R^{\operatorname{L}} R/I$ has Tor amplitude in $[a,b]$ then $P$ has Tor amplitude in $[a,b]$.
\end{corollary}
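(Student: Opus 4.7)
The plan is to reduce to Proposition \ref{lem:perfres}, which handles exactly this statement in the case of a complete Noetherian local ring with $I = \mathfrak{m}$. The reduction will proceed in two standard stages: localization at each maximal ideal of $R$, followed by $\mathfrak{m}$-adic completion.

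First I would invoke the fact that, for a pseudo-coherent object over a Noetherian ring, Tor amplitude can be checked after localization at maximal ideals. Indeed, localization is exact and commutes with $\otimes^{\operatorname{L}}$, while an $R$-module vanishes if and only if all of its localizations at maximal ideals do. Hence $P$ has Tor amplitude in $[a,b]$ over $R$ iff each $P_{\mathfrak{m}}$ does over $R_{\mathfrak{m}}$. Since the Zariski pair hypothesis forces $I \subseteq \mathfrak{m}$ for every maximal $\mathfrak{m}$, the base-change hypothesis on $P \otimes^{\operatorname{L}}_R R/I$ localizes to the analogous hypothesis on $P_\mathfrak{m}$, reducing us to the case where $(R, \mathfrak{m})$ is Noetherian local with $I \subseteq \mathfrak{m}$.

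Next I would base change to the $\mathfrak{m}$-adic completion $\widehat{R}$, which is faithfully flat over $R$ and preserves pseudo-coherence. For any $R$-module $M$ one has
\[
H^i(P \otimes^{\operatorname{L}}_R M) \otimes_R \widehat{R} \;\simeq\; H^i\!\bigl((P \otimes^{\operatorname{L}}_R \widehat{R}) \otimes^{\operatorname{L}}_{\widehat{R}} (M \otimes_R \widehat{R})\bigr),
\]
so by faithful flatness Tor amplitude of $P$ over $R$ may be tested after base change to $\widehat{R}$. To apply Proposition \ref{lem:perfres} to $P \otimes^{\operatorname{L}}_R \widehat{R}$, I need that $(P \otimes^{\operatorname{L}}_R \widehat{R}) \otimes^{\operatorname{L}}_{\widehat{R}} R/\mathfrak{m}$ is concentrated in degrees $[a,b]$ over the residue field. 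Using $I \subseteq \mathfrak{m}$, this fiber rewrites as
\[
(P \otimes^{\operatorname{L}}_R \widehat{R}) \otimes^{\operatorname{L}}_{\widehat{R}} R/\mathfrak{m} \;\simeq\; (P \otimes^{\operatorname{L}}_R R/I) \otimes^{\operatorname{L}}_{R/I} R/\mathfrak{m},
\]
which is concentrated in $[a,b]$ by the hypothesis that $P \otimes^{\operatorname{L}}_R R/I$ has Tor amplitude in $[a,b]$ over $R/I$ (with $R/\mathfrak{m}$ viewed as an $R/I$-module). Proposition \ref{lem:perfres} then yields that $P \otimes^{\operatorname{L}}_R \widehat{R}$ has Tor amplitude in $[a,b]$, and faithfully flat descent pulls the conclusion back to $P$.

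The main obstacle, such as it is, is purely bookkeeping: keeping track of which ring the Tor amplitude is being measured over at each stage, and verifying that faithful flatness of $R \to \widehat{R}$ genuinely upgrades descent of cohomology vanishing to descent of Tor amplitude. Once the two reductions above are set up cleanly, each verification is routine and the argument collapses onto Proposition \ref{lem:perfres}.
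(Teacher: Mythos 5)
Your proof is correct and takes essentially the same route as the paper's: localize at an arbitrary maximal ideal $\mathfrak{m}$, pass to the $\mathfrak{m}$-adic completion, apply Proposition~\ref{lem:perfres} there (using $I\subseteq\mathfrak{m}$ to rewrite the special fiber over $R/\mathfrak{m}$), and descend by faithful flatness of $R_\mathfrak{m}\to\widehat{R}_\mathfrak{m}$. The only difference is expository: you state the two reductions up front, whereas the paper runs the chain of Tor-amplitude identities directly.
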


\begin{proof}

The assumption on the Tor amplitude of $P \otimes_R^{\operatorname{L}} R/I$ implies for any maximal ideal $\mathfrak{m}$ of $R$, the Tor amplitude of $(P \otimes_R^{\operatorname{L}} R/I)_{\mathfrak{m}} = P_{\mathfrak{m}} \otimes_{R_\mathfrak{m}}^{\operatorname{L}} R_\mathfrak{m}/I_\mathfrak{m}$ is in $[a,b]$. Therefore, the same is true of
\[ ( P_{\mathfrak{m}} \otimes_{R_{\mathfrak{m}}}^{\operatorname{L}} R_{\mathfrak{m}}/I_{\mathfrak{m}}  ) \otimes_{R_{\mathfrak{m}}/I_{\mathfrak{m}}  }^{\operatorname{L}}   R_{\mathfrak{m}}/\mathfrak{m}R_{\mathfrak{m}} = P_{\mathfrak{m}} \otimes_{R_{\mathfrak{m}}}^{\operatorname{L}} R_{\mathfrak{m}}/\mathfrak{m}R_{\mathfrak{m}} \]
and hence also of
\[  P_{\mathfrak{m}} \otimes_{R_{\mathfrak{m}}}^{\operatorname{L}}   \widehat{R}_{\mathfrak{m}}/\mathfrak{m} \widehat{R}_{\mathfrak{m}} = (P_{\mathfrak{m}} \otimes_{R_{\mathfrak{m}}}^{\operatorname{L}} \widehat{R}_{\mathfrak{m}} ) \otimes^{\operatorname{L}}_{\widehat{R}_{\mathfrak{m}}}  \widehat{R}_{\mathfrak{m}}/\mathfrak{m} \widehat{R}_{\mathfrak{m}} , \]
where $\widehat{R}_\mathfrak{m}$ is the $\mathfrak{m}$-adic completion of $R_\mathfrak{m}$. By  Proposition \ref{lem:perfres} we conclude that $P_{\mathfrak{m}} \otimes_{R_{\mathfrak{m}}}^{\operatorname{L}} \widehat{R}_{\mathfrak{m}} $ has Tor amplitude in $[a,b]$. By faithful flatness, $P_\mathfrak{m}$ has Tor amplitude in $[a,b]$. Since $\mathfrak{m}$ was any maximal ideal of $R$, we are done.
\end{proof}

\begin{proof}[Proof of Theorem \ref{thm:weakformalgaga}]
It is clear that
\[i^\ast \colon \operatorname{Perf}( \spec \widehat{B}) \to \operatorname{Perf}(\operatorname{Spf} \widehat{B})\]
is fully faithful. To prove essential surjectivity, by equivalence (\ref{eq:weakformalgaga}) it is enough to show the following. Let $P$ be an object of $D^-_{\operatorname{coh}}( \spec \widehat{B})$. If $i^\ast P$ is perfect, then $P$ is perfect. To this end, consider the $2$-commutative diagram
\[ \begin{tikzcd} D^{-}_{\operatorname{coh}}( \spec \widehat{B}) \ar{r} \ar{d} & D^{-}_{  \operatorname{coh}}( \operatorname{Spf} \widehat{B}) \ar{d} \\
D^{-}_{\operatorname{coh}} ( \spec \widehat{B}/\widehat{J}) \ar{r} &  D^-_{\operatorname{coh}}( \operatorname{Spf} \widehat{B}/\widehat{J}).\end{tikzcd} \]
Notice that $\spec \widehat{B}/\widehat{J}$ and $\operatorname{Spf} \widehat{B}/\widehat{J}$ are \textit{canonically isomorphic} as ringed spaces. Therefore, it is enough to show that if $P$ is a pseudo-coherent object of $D(\spec \widehat{B})$ such that $P \otimes_{\widehat{B}}^{\operatorname{L}} \widehat{B}/\widehat{J}$ is perfect, then $P$ is perfect. Since  $(\widehat{B}, \widehat{J})$ is a Zariski pair, Corollary \ref{cor:zarpair} applies and we win.
\end{proof}

Finally, we record a consequence of Theorem \ref{thm:weakformalgaga} that will be used in the next subsection. Keeping with the notation above, let $B$ be a Noetherian ring, $J \subseteq B$ an ideal and $\widehat{B}$ the $J$-adic completion of $B$. There are canonical flat morphisms of ringed spaces $i \colon \operatorname{Spf} \widehat{B}  \to  \spec \widehat{B}$ and $ j \colon \spec \widehat{B} \to  \spec B$
 which induce pullback maps
\[ i^\ast \colon D^-_{\operatorname{coh}}(\spec \widehat{B}) \to D^-_{\operatorname{coh}}(\operatorname{Spf} \widehat{B}) \hspace{6mm} \text{and} \hspace{6mm} j^\ast \colon D^-_{\operatorname{coh}}(\spec B) \to D^-_{\operatorname{coh}}(\spec \widehat{B}). \]

\begin{corollary}  \label{prop:corweakformalgaga}
Let $P$ be an object of $D^-_{\operatorname{coh}}(\spec B)$ and define $\iota$ to be the composition
\[ \iota \equiv j \circ i \colon \operatorname{Spf} \widehat{B} \to\spec \widehat{B} \to  \spec B.\]
Then $\iota^\ast P$ is perfect if and only if $ j^\ast P$ is perfect.
\end{corollary}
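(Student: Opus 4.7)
The plan is to derive this almost immediately from Theorem \ref{thm:weakformalgaga} by writing $\iota^{\ast} = i^{\ast} \circ j^{\ast}$ and using that $i^{\ast}$ restricts to an equivalence of categories on perfect complexes. The only real input beyond Theorem \ref{thm:weakformalgaga} is that $j^{\ast} P$ lives in the common domain $D^{-}_{\operatorname{coh}}(\spec \widehat{B})$ of the two functors we are comparing.

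First I would verify that $j^{\ast} P \in D^{-}_{\operatorname{coh}}(\spec \widehat{B})$. Since $B \to \widehat{B}$ is flat and $\widehat{B}$ is Noetherian, the pullback $j^{\ast}$ preserves bounded-above complexes with coherent cohomology sheaves; this uses flatness to commute pullback with cohomology, and the fact that the (ordinary) base change of a finite $B$-module along $B \to \widehat{B}$ remains finite over $\widehat{B}$. Hence the composite $\iota^{\ast} P = i^{\ast}(j^{\ast}P)$ lives in $D^{-}_{\operatorname{coh}}(\operatorname{Spf} \widehat{B})$, and the question of perfectness on either side is well-posed.

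For the backward implication, if $j^{\ast} P$ is perfect then $\iota^{\ast} P = i^{\ast}(j^{\ast} P)$ is perfect, because the functor $i^{\ast}$ restricts to $\operatorname{Perf}(\spec \widehat{B}) \to \operatorname{Perf}(\operatorname{Spf} \widehat{B})$ by Theorem \ref{thm:weakformalgaga} (or more basically, because perfection is preserved by arbitrary derived pullback of ringed spaces). For the forward implication, suppose $\iota^{\ast} P = i^{\ast}(j^{\ast} P)$ is perfect on $\operatorname{Spf} \widehat{B}$. By Theorem \ref{thm:weakformalgaga}, $i^{\ast} \colon \operatorname{Perf}(\spec \widehat{B}) \stackrel{\sim}{\to} \operatorname{Perf}(\operatorname{Spf} \widehat{B})$ is essentially surjective, so we may find $Q \in \operatorname{Perf}(\spec \widehat{B})$ and an isomorphism $i^{\ast} Q \cong i^{\ast}(j^{\ast} P)$ in $D^{-}_{\operatorname{coh}}(\operatorname{Spf} \widehat{B})$. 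Applying the full faithfulness of the equivalence (\ref{eq:weakformalgaga}) on the whole category $D^{-}_{\operatorname{coh}}(\spec \widehat{B})$ to this isomorphism produces an isomorphism $Q \cong j^{\ast} P$ in $D^{-}_{\operatorname{coh}}(\spec \widehat{B})$, whence $j^{\ast} P$ is perfect, completing the proof.

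I do not foresee a genuine obstacle: the content has already been absorbed into Theorem \ref{thm:weakformalgaga}, and this corollary is essentially a bookkeeping statement about how perfectness of $j^{\ast}P$ over the Zariski spectrum is detected on the formal spectrum.
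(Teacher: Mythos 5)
Your proof is correct and takes essentially the same route as the paper: factor $\iota^\ast = i^\ast \circ j^\ast$ and apply Theorem~\ref{thm:weakformalgaga}. The only cosmetic difference is that the paper invokes the intermediate step of that theorem's proof directly (for $R \in D^-_{\coh}(\spec\widehat{B})$, $i^\ast R$ perfect implies $R$ perfect), whereas you recover the same fact from the stated theorem by combining essential surjectivity of $i^\ast$ on $\operatorname{Perf}$ with full faithfulness of the equivalence (\ref{eq:weakformalgaga}) on $D^-_{\coh}$ — a clean reformulation that relies only on the theorem's statement, plus the observation that $j^\ast P$ indeed lies in $D^-_{\coh}(\spec\widehat{B})$ by flatness of $B \to \widehat{B}$.
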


\begin{proof}
The non-trivial direction to prove is $\iota^\ast P$ perfect implies that $j^\ast P$ is perfect. However, $ \iota^\ast P =  i^\ast (  j^\ast P)$ and therefore the result follows immediately from the proof of Theorem \ref{thm:weakformalgaga} above.
\end{proof}

\subsection{Formal GAGA for Perfect Complexes}

\begin{theorem} \label{thm:gagaperf} Let $\mathscr{X} \to \spec A$ be an algebraic  stack that is proper over an $I$-adically complete Noetherian ring $A$. The equivalence
\[ \iota^\ast \colon D^-_{\operatorname{coh}}(\mathscr{X}) \stackrel{\sim}{\to} D^-_{\operatorname{coh}}(\widehat{\hspace{0pt}\mathscr{X}})\]
of Theorem \ref{thm:gagapc} restricts to an equivalence
\[ \iota^\ast \colon \operatorname{Perf}(\mathscr{X}) \stackrel{\sim}{\to} \operatorname{Perf}( \widehat{\hspace{0pt}\mathscr{X}} ).\]
\end{theorem}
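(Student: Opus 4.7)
The plan is to combine Theorem \ref{thm:gagapc} with a closed-fiber criterion for perfection that exploits properness over $\spec A$ together with the fact that $A$ is $I$-adically complete. Since $\iota^\ast$ preserves perfection (pullback of a perfect complex is perfect) and is already fully faithful on $D^-_{\coh}$ by Theorem \ref{thm:gagapc}, the only thing to verify is essential surjectivity: given $Q \in \operatorname{Perf}(\widehat{\hspace{0pt}\mathscr{X}})$, the pseudo-coherent preimage $P \in D^-_{\coh}(\mathscr{X})$ with $\iota^\ast P \simeq Q$ supplied by Theorem \ref{thm:gagapc} must actually lie in $\operatorname{Perf}(\mathscr{X})$.

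My first step would be to show that $P|_{\mathscr{X}_0}$ is perfect on the closed fiber $\mathscr{X}_0 \coloneqq \mathscr{X} \times_A A/I$. Using $\widehat{\mathscr{O}}_{\mathscr{X}}/\mathscr{I}\widehat{\mathscr{O}}_{\mathscr{X}} \cong j_{0,\ast}\mathscr{O}_{\mathscr{X}_0}$ together with Propositions \ref{prop:jail} and \ref{prop:commute}, one obtains the natural identification
\[ L(j_{0,\qc})^\ast P \;\simeq\; \overline{j_0}^\ast\bigl(\iota^\ast P \otimes^{\operatorname{L}}_{\widehat{\mathscr{O}}_{\mathscr{X}}} j_{0,\ast}\mathscr{O}_{\mathscr{X}_0}\bigr) \]
in $D^-_{\coh}(\mathscr{X}_0)$. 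Since $\iota^\ast P = Q$ is perfect, so is the right-hand side, and hence so is $P|_{\mathscr{X}_0}$.

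The heart of the argument is the globalization: a pseudo-coherent complex on $\mathscr{X}$ whose restriction to $\mathscr{X}_0$ is perfect is itself perfect. The essential input here is the openness of bounded Tor amplitude for pseudo-coherent complexes: on affine schemes this is \cite[\href{https://stacks.math.columbia.edu/tag/0BCT}{Tag 0BCT}]{ref:stacks}, and it descends to algebraic stacks through a smooth atlas, giving an open locus $U \subseteq |\mathscr{X}|$ of points at which $P$ has Tor amplitude in a chosen interval $[a,b]$. Taking $[a,b]$ to be any bound on the Tor amplitude of $P|_{\mathscr{X}_0}$, the open $U$ contains $|\mathscr{X}_0| = f^{-1}(V(I))$, where $f\colon \mathscr{X} \to \spec A$ is the structure morphism. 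Its complement $Z$ is closed and disjoint from $|\mathscr{X}_0|$; by properness of $f$, $f(Z) \subseteq \spec A$ is closed and disjoint from $V(I)$. But $A$ is $I$-adically complete, so $I \subseteq \operatorname{Jac}(A)$, and an elementary argument (for a proper ideal $J \subseteq A$, the equality $J + I = A$ would write $1-y \in J$ with $y \in I$, so $1-y$ is both a unit and an element of a proper ideal) forces every non-empty closed subset of $\spec A$ to meet $V(I)$. Therefore $f(Z) = \emptyset$, so $Z = \emptyset$, $U = |\mathscr{X}|$, and $P$ has globally bounded Tor amplitude on $\mathscr{X}$, i.e., is perfect.

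The main obstacle I expect is the careful handling of the openness of the Tor amplitude locus on the lisse-\'{e}tale site and the base-change identification for the restriction to $\mathscr{X}_0$, but both reduce to affine statements via a smooth atlas and the descent/completion machinery already assembled in Sections \ref{section:gagapc} and \ref{section:existencepc}.
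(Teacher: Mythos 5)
Your argument is correct, but it takes a genuinely different route to the key local statement than the paper does. The paper proves perfection of $P$ in a neighborhood of the closed fiber by localizing at each closed point $\mathfrak{m} \supseteq IB$ of a smooth affine cover $\spec B \to \mathscr{X}$, passing through the affine formal GAGA theorem for perfect complexes (Theorem \ref{thm:weakformalgaga} together with Corollary \ref{prop:corweakformalgaga}) to deduce perfection of $j^\ast P$ over the completed local ring $\spec \widehat{B_\mathfrak{m}}$, and then descending along the faithfully flat map $B_\mathfrak{m} \to \widehat{B_\mathfrak{m}}$ via \cite[\href{https://stacks.math.columbia.edu/tag/068T}{Tag 068T}]{ref:stacks}. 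You instead pass immediately to the closed fiber $\mathscr{X}_0$: using Propositions \ref{prop:jail} and \ref{prop:commute} you identify $L(j_{0,\qc})^\ast P$ with a reduction of $Q = \iota^\ast P$, so $P|_{\mathscr{X}_0}$ is perfect, and then you invoke the pointwise residue-field criterion for Tor amplitude, noting that for $x \in |\mathscr{X}_0|$ one has $P \otimes^{\operatorname{L}} \kappa(x) \simeq P|_{\mathscr{X}_0} \otimes^{\operatorname{L}} \kappa(x)$. This bypasses Theorem \ref{thm:weakformalgaga} entirely. Both routes ultimately rest on the same Stacks Project criterion (\cite[\href{https://stacks.math.columbia.edu/tag/068V}{Tag 068V}]{ref:stacks}, which is also what underlies the paper's Corollary \ref{cor:zarpair}), and both finish with identical properness plus $I \subseteq \operatorname{Jac}(A)$ arguments. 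Two small points you should tighten: (1) the inclusion $|\mathscr{X}_0| \subseteq U$ deserves the one-line justification $P \otimes^{\operatorname{L}} \kappa(x) \simeq P|_{\mathscr{X}_0} \otimes^{\operatorname{L}} \kappa(x)$ for $x \in \mathscr{X}_0$, since without it the step looks like a non sequitur; and (2) the citation to Tag 0BCT for openness of the finite Tor amplitude locus is likely not the correct tag, though the fact is standard (it follows from the local residue-field criterion by spreading out a strictly perfect representative). Your approach buys a shorter proof that avoids the excursion through formal spectra, at the cost of not highlighting Theorem \ref{thm:weakformalgaga}, which the paper uses again in the proof of Grothendieck existence for perfect complexes.
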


\begin{proof}
It is clear that the restriction $\iota^\ast \colon \operatorname{Perf}(\mathscr{X}) \to \operatorname{Perf}( \widehat{\hspace{0pt}\mathscr{X}} )$ is fully faithful. To show essential surjectivity, by Theorem \ref{thm:gagapc}  this amounts to showing that if an object $P$ of $D^-_{\coh}(\mathscr{X})$ maps into $\operatorname{Perf}(\widehat{\hspace{0pt}\mathscr{X}})$, then $P$ is perfect. Without loss of generality, we may assume that the pullback of $I$ to $\mathscr{X}$ is not the unit ideal, for otherwise $\mathscr{X} = \widehat{\hspace{0pt}\mathscr{X}}$ and there is nothing to prove. 
\medskip

Let  $U = \spec B \to \mathscr{X}$ be a smooth surjection from an affine scheme. First we will show for any maximal ideal $\mathfrak{m}$ containing $IB$ that the restriction of $P$ to $\spec B_{\mathfrak{m}}$ \textit{in the Zariski topology} is perfect (The reader that prefers a more deformation-theoretic viewpoint may interpret this as saying for any closed point $x_0$ in $\mathscr{X}$, the restriction of $P$ to a versal ring of $\mathscr{X}$ at $x_0$ is perfect). 
The object $ \iota^\ast P$ is perfect on $\widehat{\hspace{0pt}\mathscr{X}}$ and therefore $ \iota^\ast( P|_{\spec B_{\mathfrak{m},\liset}} )$ is a perfect  object of $D^-_{\operatorname{coh}}(\widehat{\spec B}_{\mathfrak{m}, {\liset}})$. 
On the other hand, the equivalence 
\[  (\widehat{\epsilon} \circ \widehat{\delta})^\ast  \colon D^-_{\operatorname{coh}}(\widehat{\spec B}_{\mathfrak{m}, {\zar}}) \stackrel{\sim}{\to} D^-_{\operatorname{coh}}(\widehat{\spec B}_{\mathfrak{m}, {\liset}})\]
given by Proposition \ref{prop:pctopapp} restricts to the full subcategories of perfect complexes by Proposition \ref{prop:tordimapp}. Therefore,  we may work with the Zariski topology and must show that if $Q \in D^-_{\coh}(\spec B_{\mathfrak{m}})$ satisfies the property that $\iota^\ast Q$ is perfect, then $Q$ is perfect.

\medskip

By Corollary \ref{prop:corweakformalgaga}, we deduce that  $ j^\ast Q$ is perfect where (keeping with the notation in the previous subsection)   $ j^\ast$ is the pullback
\[ j^\ast \colon D^-_{\coh}(\spec \widehat{B_{\mathfrak{m}}}) \to  D^-_{\coh}(\spec B_{\mathfrak{m}}).\]
Now the ideal $IB$ is contained in the maximal ideal  $\mathfrak{m}$ and therefore the map $B_{\mathfrak{m}}$ to the $I$-adic completion $\widehat{B_{\mathfrak{m}}}$ is faithfully flat. We conclude that $Q$ is perfect  by \cite[\href{https://stacks.math.columbia.edu/tag/068T}{Tag 068T}]{ref:stacks}. This completes the claim that $P|_{\spec B_{\mathfrak{m}}}$ is perfect in the Zariski topology.
\medskip

The preceding argument shows that the restriction of $P$ to the (Zariski) local ring at any closed point of $U_0\coloneqq U \times_A A/I$ is perfect. By spreading out, there is a Zariski open $V$ containing $U_0$ such that $P|_V$ is perfect. Therefore to show that $P$ is perfect, it is enough to show that the  composition $V \to U \to \mathscr{X}$ is surjective. Let $W$ denote the image of this morphism. It is an open substack of $\mathscr{X}$ containing $\mathscr{X}_0$. If the complement $|\mathscr{X}| -  |W|$ is non-empty, by properness its image in $\spec A$ is a non-empty closed set disjoint from $\spec A/I$, contradicting the adic property of $A$. Therefore $|W| =  |\mathscr{X}|$ and so $V \to \mathscr{X}$ is surjective by \cite[\href{https://stacks.math.columbia.edu/tag/04XI}{Tag 04XI}]{ref:stacks}.
\end{proof}

\subsection{Grothendieck's Existence Theorem for Perfect Complexes}

\begin{theorem}[Grothendieck Existence]
Let $\mathscr{X}$ be an algebraic stack that is proper  over an $I$-adically complete Noetherian ring $A$. Define $\mathscr{X}_n\coloneqq \mathscr{X} \times_A A/I^{n+1}$. Let $\{P_n\}$ be an adic system of perfect complexes on $\mathscr{X}_n$. Then there exists a perfect complex $P \in \operatorname{Perf}(\mathscr{X})$ such that $L(j_{n,\qc})^\ast P \cong P_n$.
\end{theorem}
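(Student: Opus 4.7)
The plan is to combine Grothendieck's existence theorem for pseudo-coherent complexes (Theorem~\ref{thm:existencepc}) with the Zariski-pair argument appearing in the proof of Theorem~\ref{thm:gagaperf}. Since every perfect complex is pseudo-coherent, the given adic system $\{P_n\}$ is in particular an adic system of pseudo-coherent complexes, so Theorem~\ref{thm:existencepc} produces a pseudo-coherent complex $P \in D^-_{\coh}(\mathscr{X})$ with $L(j_{n,\qc})^\ast P \cong P_n$ for every $n$. The remaining task is to promote pseudo-coherence of $P$ to perfectness.

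To do this, I would pick a smooth surjection $\spec B \to \mathscr{X}$ and show that $P|_{\spec B_{\mathfrak{m}}}$ is perfect for every maximal ideal $\mathfrak{m} \subseteq B$ containing $IB$. Using the topology-comparison results of the appendix, this restriction can be treated as a pseudo-coherent complex on $\spec B_{\mathfrak{m}}$ in the Zariski topology. Its derived reduction modulo $IB_{\mathfrak{m}}$ should agree, via the construction of $L(j_{0,\qc})^\ast$ together with Proposition~\ref{prop:commute}, with the restriction of $P_0$ to $\spec(B/IB)_{\mathfrak{m}_0}$, and so is perfect by the hypothesis on $P_0$. Since $IB_{\mathfrak{m}}$ lies in the Jacobson radical of $B_{\mathfrak{m}}$, the pair $(B_{\mathfrak{m}}, IB_{\mathfrak{m}})$ is Zariski, and Corollary~\ref{cor:zarpair} then forces $P|_{\spec B_{\mathfrak{m}}}$ to have finite Tor-amplitude; combined with pseudo-coherence this yields perfectness at $\mathfrak{m}$.

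A standard spreading-out argument produces a Zariski open $V \subseteq \spec B$ on which $P$ is perfect and which contains $\spec B_0 = V(IB)$. Its image $W$ in $\mathscr{X}$ is an open substack containing $\mathscr{X}_0$, and by properness of $\mathscr{X} \to \spec A$ together with $I$-adic completeness of $A$, the complement $|\mathscr{X}| \setminus |W|$ must be empty, so $W = \mathscr{X}$; since the smooth chart was arbitrary, $P$ is perfect globally. I expect the main technical obstacle to be the compatibility in the middle paragraph---namely checking that the derived restriction of the global algebraization $P$ to a smooth affine chart, followed by derived reduction modulo $I$, really recovers the corresponding restriction of $P_0$. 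This is however exactly what Proposition~\ref{prop:commute} is designed to package, so it reduces to a careful unwinding of the Hall--Rydh construction of $L(j_{n,\qc})^\ast$, after which the rest of the argument is essentially a transcription of the closing steps of the proof of Theorem~\ref{thm:gagaperf}.
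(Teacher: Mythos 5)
Your proof is correct, and it takes a genuinely different (though closely related) route from the paper's. The paper's proof stays entirely on the formal side first: it checks that $Q \coloneqq R\lim Q_k$ on $\widehat{\mathscr{X}}$ is perfect by localizing to an affine chart, passing through the equivalence $D^-_{\coh}(\spec \widehat{B}) \simeq D^-_{\coh}(\operatorname{Spf}\widehat{B})$, and applying Corollary~\ref{cor:zarpair} to the single Zariski pair $(\widehat{B}, I\widehat{B})$ --- which is globally a Zariski pair precisely because $\widehat{B}$ is $I$-adically complete, so no localization at maximal ideals is needed --- and only then invokes Theorem~\ref{thm:gagaperf} as a black box to algebraize the perfect $Q$. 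You instead algebraize first to a pseudo-coherent $P$ on $\mathscr{X}$ using Theorem~\ref{thm:existencepc}, and then upgrade it to a perfect complex by inlining the closing argument of Theorem~\ref{thm:gagaperf}: localize at each $\mathfrak m \supseteq IB$, apply Corollary~\ref{cor:zarpair} to the Zariski pair $(B_{\mathfrak m}, IB_{\mathfrak m})$ using that the reduction is the restriction of the perfect $P_0$, then spread out and use properness plus $I$-adic completeness of $A$ to cover all of $\mathscr{X}$. What the paper's route buys is modularity (the GAGA theorem for perfect complexes does the algebraization in one stroke) and a single, localization-free application of the Zariski-pair criterion; what your route buys is that you never need to know $\iota^\ast P$ is perfect on all of $\widehat{\mathscr{X}}$ --- you extract perfectness directly from the perfectness of $P_0$, and you avoid routing through Theorem~\ref{thm:gagaperf} and Corollary~\ref{prop:corweakformalgaga} (whose proofs themselves rest on Corollary~\ref{cor:zarpair}), so the logical dependency chain is somewhat shorter. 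The one step you flag as a technical obstacle --- compatibility of derived reduction modulo $I$ with the Hall--Rydh pullback at the level of a smooth affine chart --- is indeed exactly what Proposition~\ref{prop:commute} provides, so there is no gap.
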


\begin{proof}
By formal GAGA for perfect complexes (Theorem \ref{thm:gagaperf}) and arguing as in Theorem \ref{thm:existencepc}, it is enough to show the following. Let $Q_k$ be an adic system of perfect complexes with each $Q_k \in \operatorname{Perf}( \mathscr{X}, j_{k, \ast} \mathscr{O}_{\mathscr{X}_k})$. Then \[Q\coloneqq R\lim Q_k\]
 is perfect on $\widehat{\mathscr{X}}$. 
\medskip

As the question of being perfect is local, just as in the proof of Theorem \ref{thm:existencepc} we may reduce to the situation that $\mathscr{X} = \spec B$ and $Q_k$ are perfect on $(\spec B_k,  \mathscr{O}_{\spec B_k})$ in the Zariski topology. Furthermore, by the proof of Theorem \ref{thm:existencepc}, we know that $Q\coloneqq R\lim Q_k$ is an object of $D^-_{\coh}(\spec B, \widehat{\mathscr{O}}_{\spec B}) \simeq D^-_{\coh}(\operatorname{Spf} \widehat{B})$ such that its pullback to $\spec B_0$ is isomorphic to $Q_0$. Now recall by (\ref{eq:weakformalgaga}) that there is an equivalence of categories
\[ \iota^\ast\colon D^-_{\coh}(\spec \widehat{B}) \to D^-_{\coh}(\operatorname{Spf} \widehat{B}).\]
It follows that $Q$ is the image of a pseudo-coherent object $Q'$ on $\spec \widehat{B}$ with the property that $Q' \otimes_{\widehat{B}}^{\operatorname{L}} \widehat{B}/I \widehat{B}$ is perfect. By Corollary \ref{cor:zarpair}, $Q'$ is perfect and therefore $Q$ is perfect.
\end{proof}

\section{Grothendieck's Existence Theorem for Relatively Perfect Complexes} \label{section:existencerp}

\begin{definition} \label{def:rp} Let $S$ be an algebraic space and $f \colon \mathscr{X} \to S$ an algebraic stack over $S$ that is flat and locally of finite presentation. An object $P \in D({\mathscr{X}}_{\liset})$ is said to be \textit{relatively perfect} over $S$ if it is pseudo-coherent as an object of $D({\mathscr{X}}_{\liset})$ and locally has finite Tor dimension as an object of $D(f^{-1}\mathscr{O}_{S_{\liset}})$.
\end{definition}

\begin{remark}
Let $X \to \spec k$ be the nodal cubic curve. Then the structure sheaf of the node is relatively perfect over $\spec k$, but is \textit{not} perfect on $X$. This shows that the notion of being relatively perfect does not mean perfect on fibers.
\end{remark}

\begin{proposition}
Let $\mathscr{X}$ be an algebraic stack that is proper and flat over an $I$-adically complete Noetherian ring $A$. Define $\mathscr{X}_n \coloneqq \mathscr{X} \times_A A/I^{n+1}$ and let $j_n \colon \mathscr{X}_n \hookrightarrow \mathscr{X}$ denote the canonical closed immersion of stacks. Let $P_n \in D(\mathscr{X}_n)$ be an adic system of relatively perfect objects over $\spec A/I^{n+1}$. Then there is a object $P \in D(\mathscr{X})$ that is relatively perfect over $\spec A$ such that $L(j_{n,\qc})^\ast P = P_n$.
\end{proposition}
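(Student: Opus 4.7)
The plan is to mirror the strategy of Theorem \ref{thm:gagaperf}: first algebraize at the level of pseudo-coherent complexes, then propagate the perfection condition from the closed fibre using the flatness and $I$-adic completeness hypotheses. Because a relatively perfect object is in particular pseudo-coherent, the adic system $\{P_n\}$ is an adic system of pseudo-coherent complexes, and Theorem \ref{thm:existencepc} produces an object $P \in D^-_{\coh}(\mathscr{X})$ with $L(j_{n,\qc})^\ast P \simeq P_n$. It only remains to show that $P$ is relatively perfect over $\spec A$.

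Having locally finite Tor amplitude over $f^{-1}\mathscr{O}_{\spec A}$ is smooth-local on $\mathscr{X}$, so fix a smooth cover $U = \spec B \to \mathscr{X}$ with $B$ Noetherian and, by the flatness hypothesis on $\mathscr{X}/\spec A$, flat over $A$. Using the comparison of the Zariski and lisse-\'{e}tale topologies on affine schemes (cf.\ Proposition \ref{prop:pctopapp} and its corollaries), it suffices to treat $P|_{\spec B}$ as a pseudo-coherent object of $D(B)$ and prove it has Tor amplitude in some fixed interval $[a,b]$ over $A$. Fix $[a,b]$ to be a Tor amplitude of $P_0|_{\spec B_0}$ over $A/I$, which exists because $P_0$ is relatively perfect over $A/I$.

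For each closed point of $\spec B_0$, viewed as a maximal ideal $\mathfrak{m} \subseteq B$ containing $IB$, I would show $P_\mathfrak{m}$ has Tor amplitude $[a,b]$ over $A$. Passing along the faithfully flat map $B_\mathfrak{m} \to R \coloneqq \widehat{B}_\mathfrak{m}$ reduces this to the following relative variant of Proposition \ref{lem:perfres}: if $R$ is a Noetherian local ring, complete with respect to a maximal ideal $\mathfrak{n}$, flat over $A$, and with $IR \subseteq \mathfrak{n}$, and if $\tilde P \in D(R)$ is pseudo-coherent with $\tilde P \otimes_A^{\operatorname{L}} A/I$ of Tor amplitude $[a,b]$ over $A/I$, then $\tilde P$ has Tor amplitude $[a,b]$ over $A$. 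To prove it, I would choose a $K$-flat resolution $F^\bullet \to \tilde P$ by finite free $R$-modules; flatness of $R/A$ makes $F^\bullet$ a $K$-flat $A$-resolution, so $\tilde P \otimes_A^{\operatorname{L}} M = F^\bullet \otimes_A M$ for every $A$-module $M$. Reducing to $M$ finitely generated, $M$ is $I$-adically complete, and each $F^i \otimes_A M = F^i \otimes_R (R \otimes_A M)$ is also $I$-adically complete because $R$ is $\mathfrak{n}$-adically (hence $IR$-adically) complete and $F^i$ is finite free over $R$. Then $F^\bullet \otimes_A M = R\lim_k F^\bullet \otimes_A M/I^kM$, and the Milnor exact sequence combined with the filtration of $M/I^kM$ by powers of $I$ — whose graded pieces are $A/I$-modules on which the Tor-amplitude hypothesis applies directly — together with a Mittag-Leffler argument mimicking Proposition \ref{lem:perfres}, yields $H^i(F^\bullet \otimes_A M) = 0$ for $i \notin [a,b]$.

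To finish, I would observe that the locus of $\spec B$ where $P|_{\spec B}$ has Tor amplitude in $[a,b]$ over $A$ is Zariski open; by the previous paragraph it contains every closed point of $\spec B_0$, and by a standard spreading-out it contains an open neighbourhood $V \supseteq \spec B_0$. The image of $V$ in $\mathscr{X}$ is an open substack containing $\mathscr{X}_0$, and exactly as in the concluding paragraph of the proof of Theorem \ref{thm:gagaperf}, the adic property of $A$ together with properness of $\mathscr{X}$ over $\spec A$ forces this substack to be all of $\mathscr{X}$; hence $V \to \mathscr{X}$ is surjective and $P$ is relatively perfect over $\spec A$. The main obstacle will be the relative Tor-amplitude lemma: one must carefully track the interplay of tensoring over $A$ with completeness in $R$ so that the Milnor/Mittag-Leffler machinery of Proposition \ref{lem:perfres} transports to the relative setting. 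Once that lemma is in hand, everything else is an assembly of arguments already developed in Sections \ref{section:existencepc} and \ref{section:existencep}.
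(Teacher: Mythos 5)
Your proposal is correct, but it takes a genuinely different --- and longer --- route from the paper's. The reduction to the affine Zariski situation $\spec B \to \spec A$ is the same, but from there the paper argues in essentially one line: by flatness of $B/A$ one has $P \otimes_B^{\operatorname L} B/IB = P \otimes_A^{\operatorname L} A/I$, and then the paper asserts that since $A \to B$ is of finite type and $A$ is Noetherian, $P$ is a pseudo-coherent object of $D(A)$, so Corollary~\ref{cor:zarpair} applies directly with $R = A$ (using that $(A,I)$ is a Zariski pair) to give finite Tor dimension of $P$ over $A$. There is no localization at closed points, no passage to complete local rings, no spreading out, and no appeal to properness. You instead prove a relative variant of Proposition~\ref{lem:perfres} over the complete local rings $R = \widehat{B}_{\mathfrak m}$ (flat over $A$, with $IR \subseteq \mathfrak{n}$) at closed points of the special fibre, and then spread out and invoke properness of $\mathscr X/\spec A$ exactly as in the proof of Theorem~\ref{thm:gagaperf}.

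Your longer route buys robustness, and indeed appears to be the one that actually closes the argument. The paper's intermediate claim --- that $P$ becomes pseudo-coherent as an object of $D(A)$ merely because $A \to B$ is of finite type --- is not correct as stated: restriction of scalars along $A \to B$ preserves pseudo-coherence only when $B$ is pseudo-coherent as an $A$-module, which for Noetherian $A$ forces $B$ finite over $A$. Already $B = A[x]$ with $P = B$ fails. Your argument sidesteps this entirely, because the bounded-above finite-free resolution in the relative lemma is taken over $R = \widehat{B}_{\mathfrak m}$, not over $A$, and it is the $\mathfrak{n}$-adic (hence $IR$-adic) completeness of finite $R$-modules, not any pseudo-coherence over $A$, that drives the identification of $F^\bullet \otimes_A M$ with $R\lim_k F^\bullet \otimes_A M/I^k M$. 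Two small points to tighten when you write this up: justify $R\lim = \lim$ by the surjectivity of the level-wise transition maps (Stacks Tag 091D, as in Proposition~\ref{lem:perfres}), and cite or prove the openness of the finite-Tor-amplitude-over-$A$ locus on $\spec B$ for your spreading-out step, since you only control Tor amplitude at closed points of $V(IB)$ and properness is then genuinely needed to see that the resulting open substack of $\mathscr{X}$ is all of $\mathscr{X}$.
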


\begin{proof}

Let $\spec B\to \mathscr{X}$ be a smooth cover by an affine scheme. The restriction of $L(j_{n,qc})^\ast$ to the (small) \'{e}tale site of $\spec B$ agrees with the \'{e}tale pullback $L(j_{n,\et})^\ast$.  Therefore, as the question of being relatively perfect over $A$ is local on $\mathscr{X}$, we may assume that $\mathscr{X} = \spec B$ with the \'{e}tale topology. Furthermore, just as in the proof of Theorem  \ref{thm:gagaperf}, by 
\cite[\href{https://stacks.math.columbia.edu/tag/0DK7}{Tag 0DK7}]{ref:stacks} and  \cite[\href{https://stacks.math.columbia.edu/tag/0DHY}{Tag 0DHY}]{ref:stacks}  we may work in the Zariski topology and prove the following. Let $\spec B \to \spec A$ be a flat morphism of finite type, with $A$ an $I$-adically complete Noetherian ring $A$. Let $P$ be a pseudo-coherent object of $D(\spec B)$ and suppose that $P \otimes_B^{\operatorname{L}} B/IB$ has finite Tor dimension as an $A/I$-module. Then $P$ has finite Tor dimension as an $A$-module. To prove this, observe by flatness that the natural map
\[ B \otimes_A^{\operatorname{L}} A/I \to B/IB\]
is a quasi-isomorphism. Therefore,
\begin{eqnarray*} P \otimes_B^{\operatorname{L}} B/IB &=&  P \otimes_B^{\operatorname{L}} (B \otimes^{\operatorname{L}}_A A/I) \\
&=&  P  \otimes^{\operatorname{L}}_A A/I.\end{eqnarray*}
 Since $A \to B$ is finite type, $P$ is a pseudo-coherent object of $D(A)$. The result follows from Corollary \ref{cor:zarpair}.
\end{proof}

\appendix
\section{Comparison between Topologies}

\subsection{Comparisons between topologies on a scheme} 

Let $X$ be a locally Noetherian scheme and $X_0 \subseteq X$ a closed subscheme defined by a coherent ideal $I$. Previously, recall that we defined the ringed site 
\[ \widehat{X} \coloneqq (X_{\liset},\widehat{ \mathscr{O}}_{ {X}_{\liset}}).\] Let  ${X}_{\zar}$ and ${X}_{\et}$ denote respectively the Zariski and (small) \'{e}tale sites of $X$. Analogously, we may define $\widehat{X}_{\zar}$ and $\widehat{X}_{\et}$. In this appendix, we will record  comparison results between the the categories of pseudo-coherent objects on $\widehat{X}_{\zar}, \widehat{X}_{\et}$ and $\widehat{X}$. Similarly, we will also record a result comparing Tor dimensions of pseudo-coherent objects on these three sites. We let $\we \colon \widehat{X}_{\et} \to \xhzar$ and $\widehat{\delta} \colon \widehat{X}_{\liset} \to \widehat{X}_{\et}$ the canonical flat morphisms of ringed sites.

\begin{proposition} \label{prop:gagatopology}
The pullback maps $\we^\ast$ and $\widehat{\delta}^\ast$ induce equivalences of categories
\[ \operatorname{Coh}(\widehat{X}_{\zar}) \stackrel{ \we^\ast}{\to}    \operatorname{Coh}(\widehat{X}_{\et}) \stackrel{ \widehat{\delta}^\ast}{\to}  \operatorname{Coh}( \widehat{X}_{\liset})  .\]
\end{proposition}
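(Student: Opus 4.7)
The plan is to establish both equivalences in parallel by reducing to coherent sheaves on the thickenings $X_n \coloneqq V(\mathscr{I}^{n+1}) \hookrightarrow X$. First I would record the classical (non-formal) comparison: for each fixed $n \geq 0$, the pullbacks along $X_{n,\liset} \to X_{n,\et} \to X_{n,\zar}$ induce equivalences
\[ \operatorname{Coh}(X_{n,\zar}) \stackrel{\sim}{\to} \operatorname{Coh}(X_{n,\et}) \stackrel{\sim}{\to} \operatorname{Coh}(X_{n,\liset}). \]
These are standard consequences of (smooth and) \'etale descent for quasi-coherent sheaves, combined with the local characterization of coherence on a locally Noetherian scheme.

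Next, for each topology $\tau \in \{\zar, \et, \liset\}$ I would show that the functor $\mathscr{F} \mapsto (\mathscr{F}/\mathscr{I}^{n+1}\mathscr{F})_n$ yields an equivalence
\[ \operatorname{Coh}(\widehat{X}_\tau) \stackrel{\sim}{\to} \lim_n \operatorname{Coh}(X_{n,\tau}). \]
The case $\tau = \zar$ is the classical theory of affine formal schemes (Remark~\ref{rem:formalgaga}), reduced to the affine situation by the fact that coherence is Zariski-local. For $\tau \in \{\et, \liset\}$ the same proof as in Conrad's Theorem~\ref{thm:conradgaga}, specialized to a scheme, applies: the inverse functor sends an adic system $(F_n)$ to $\lim F_n$, and coherence of this limit as a $\widehat{\mathscr{O}}_{X_\tau}$-module is checked locally and reduces via the previous paragraph to the Zariski statement on each $X_{n,\zar}$.

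Combining these two steps gives a diagram of equivalences
\[ \operatorname{Coh}(\widehat{X}_\zar) \simeq \lim_n \operatorname{Coh}(X_{n,\zar}) \simeq \lim_n \operatorname{Coh}(X_{n,\et}) \simeq \operatorname{Coh}(\widehat{X}_\et) \simeq \lim_n \operatorname{Coh}(X_{n,\liset}) \simeq \operatorname{Coh}(\widehat{X}_\liset). \]
It remains to check that these composite identifications are induced by the pullbacks $\widehat{\epsilon}^\ast$ and $\widehat{\delta}^\ast$. This follows from the compatibilities $\widehat{\epsilon}^\ast \widehat{\mathscr{O}}_{X_\zar} = \widehat{\mathscr{O}}_{X_\et}$ and $\widehat{\delta}^\ast \widehat{\mathscr{O}}_{X_\et} = \widehat{\mathscr{O}}_{X_\liset}$ (built into the definitions), together with the naturality of the reduction functor $\mathscr{F} \mapsto (\mathscr{F}/\mathscr{I}^{n+1}\mathscr{F})$ under pullback.

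The main obstacle I expect is the étale and lisse-étale version of the ``adic system'' equivalence $\operatorname{Coh}(\widehat{X}_\tau) \simeq \lim_n \operatorname{Coh}(X_{n,\tau})$: one must verify that an adic system of coherent sheaves on the thickenings glues to a genuine $\widehat{\mathscr{O}}_{X_\tau}$-module sheaf (in particular that the inverse limit is unaffected by the finer coverings), and that this limit is coherent rather than merely quasi-coherent. This is essentially extracted from Conrad's proof of Theorem~\ref{thm:conradgaga} specialized to schemes, but the verification requires tracking that the limit of adic systems of coherent sheaves on smooth covers descends compatibly.
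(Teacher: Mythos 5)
Your outline reconstructs the content of the one-line citation the paper gives (Conrad, Remark~1.6) and the overall strategy — comparing each $\operatorname{Coh}(\widehat{X}_\tau)$ with the common middle term $\lim_n\operatorname{Coh}(X_{n,\tau})$ and using the finite-level comparison of topologies — is the right one and is essentially what the cited remark contains; the paper itself offers no independent argument. Two points in your Step~2 deserve tightening. First, appealing to ``the same proof as in Conrad's Theorem~\ref{thm:conradgaga}, specialized to a scheme'' is slightly misleading because that theorem carries a properness hypothesis: what you need is only the \emph{second} arrow $\operatorname{Coh}(\widehat{\hspace{0pt}\mathscr{X}})\to\lim_n\operatorname{Coh}(\mathscr{X}_n)$, which is a local statement and does not use properness, and you should say so explicitly rather than citing the full theorem. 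Second, the phrase that coherence of $\lim F_n$ ``reduces via the previous paragraph to the Zariski statement on each $X_{n,\zar}$'' glosses the actual crux: your Step~1 is a comparison at a \emph{fixed} finite level, whereas what must be verified is that the inverse limit of the adic system commutes with $\we^\ast$ (resp.\ $\hd^\ast$), i.e.\ that on an affine \'etale $U=\spec C\to X=\spec B$ one has $\lim(M\otimes_{\widehat B}C_n)=M\otimes_{\widehat B}\widehat C$ for $M=\lim M_n$ finite over $\widehat B$ — a Mittag--Leffler/completeness check using that the transition maps are surjective on affine sections. This is the same kind of argument the paper itself carries out in Lemma~\ref{lem:vanishinget} and Corollary~\ref{cor:appzaret}, so it is not a gap in principle, but it is the place where your sketch is thinnest and should be spelled out.
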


\begin{proof}
See \cite[Remark 1.6]{ref:conrad}.
\end{proof}
 
\begin{lemma} \label{lem:vanishinget}
Let $X$ be an affine Noetherian scheme, $I$ a coherent ideal on $X$. For any coherent sheaf $G$ on $\widehat{X}_{\et}$, $H^i(\widehat{X}_{\et},G)=0$ for all $i > 0$. 
\end{lemma}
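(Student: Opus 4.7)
The plan is to reduce this to classical affine vanishing in étale cohomology via a derived-limit argument. Write $X = \spec A$ with $A$ Noetherian, and set $G_n \coloneqq G/\mathscr{I}^{n+1}G$. Since $G$ is coherent on $\widehat{X}_{\et}$, each quotient $G_n$ is a coherent sheaf on the affine Noetherian ringed site $(X_{\et}, \mathscr{O}_X/\mathscr{I}^{n+1})$, and—exploiting topological invariance of the small étale site under nilpotent thickenings—we may view $G_n$ as a quasi-coherent $\mathscr{O}_X$-module on $X_{\et}$. Note that $\widehat{X}_{\et}$ and $X_{\et}$ share the same underlying site, so $H^i(\widehat{X}_{\et}, G)$ can be computed as $H^i(X_{\et}, G)$ after forgetting the $\widehat{\mathscr{O}}$-module structure.

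The first step is to establish that the canonical map $G \to R\lim G_n$ is an isomorphism in $D(X_{\et})$. By the structure of a coherent $\widehat{\mathscr{O}}$-module, $G = \lim G_n$ as sheaves, so what remains is vanishing of the derived correction. On any affine étale $U \to X$, the short exact sequences
\[
0 \to \mathscr{I}^{n+1}G/\mathscr{I}^{n+2}G \to G_{n+1} \to G_n \to 0
\]
combined with affine vanishing $H^1(U_{\et}, \mathscr{I}^{n+1}G/\mathscr{I}^{n+2}G) = 0$ for quasi-coherent sheaves on affine Noetherian schemes in the étale topology (e.g.\ \cite[\href{https://stacks.math.columbia.edu/tag/03DX}{Tag 03DX}]{ref:stacks}) yield surjective transitions $\Gamma(U, G_{n+1}) \to \Gamma(U, G_n)$. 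Since affine étale opens form a basis, this upgrades to $R\lim G_n = \lim G_n = G$ in $D(X_{\et})$.

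I would then plug this into the Milnor exact sequence
\[
0 \to R^1\lim H^{i-1}(X_{\et}, G_n) \to H^i(X_{\et}, R\lim G_n) \to \lim H^i(X_{\et}, G_n) \to 0.
\]
For each fixed $n$, the same affine vanishing gives $H^j(X_{\et}, G_n) = 0$ for all $j \geq 1$, so both flanking terms vanish whenever $i \geq 2$. For $i = 1$, the right-hand term again vanishes, and the system $\{\Gamma(X, G_n)\}$ has surjective transitions by the global version of the affine vanishing argument; hence it is Mittag-Leffler and $R^1\lim = 0$. Combined with the isomorphism $G \simeq R\lim G_n$, this yields $H^i(\widehat{X}_{\et}, G) = 0$ for all $i > 0$.

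The main obstacle I anticipate is the rigorous verification that $G \simeq R\lim G_n$ in $D(X_{\et})$: one must ensure that forgetting the $\widehat{\mathscr{O}}$-module structure in order to pass to the abelian derived category introduces no derived-limit correction. The Mittag-Leffler property on the affine étale basis, together with the fact that derived limits of sheaves are computed section-wise on a basis of sufficiently acyclic objects, is precisely what rescues this step; everything else is a direct application of the Milnor sequence and affine étale vanishing.
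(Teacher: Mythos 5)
Your proposal matches the paper's proof: both reduce to $G \cong R\lim G_n$ by running a Milnor-sequence argument over an affine étale basis (using affine quasi-coherent vanishing of étale cohomology and surjectivity of the transition maps $\Gamma(U,G_{n+1}) \to \Gamma(U,G_n)$ to kill $R^1\lim$), and then apply the Milnor sequence once more at the level of global sections. The only cosmetic difference is that the paper spells out the first Milnor computation on $\widehat{Y}_{\et}$ for $Y=\spec A$ explicitly where you compress it into "upgrades on a basis," but the ingredients and structure are identical.
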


\begin{proof}
Define $G_n \coloneqq G/I^{n+1}G$. First we prove that the natural map $G \to R\lim G_n$ is an isomorphism. Since $G \cong \lim G_n$, it is enough to show $H^i(R \lim G_n) = 0$ for all $i > 0$. Now $H^i(R\lim G_n)$ is the sheafification of the presheaf $\widehat{U} \mapsto H^i(\widehat{U}, R\lim G_n)$ for $\widehat{U} \in \operatorname{Ob}(\widehat{X}_{\et})$. Therefore we reduce to showing for $Y = \spec A$ that 
\[ H^i(\widehat{Y}_{\et}, R\lim G_n) = 0\] for all $i > 0$. Define $Y_n \coloneqq \spec A/I^{n+1}$. The Milnor exact sequence  gives
\begin{equation} \label{eq:milnorseq} 0 \to R^1 \lim H^{i-1} (\widehat{Y}_{\et}, G_n) \to H^i(\widehat{Y}_{\et}, R\lim G_n) \to \lim H^i(\widehat{Y}_{\et}, G_n) \to 0. \end{equation}
We have
\begin{eqnarray*} H^i(\widehat{Y}_{\et}, G_n) &=& H^i((Y_n)_{\et}, G_n) \\
&=& H^i((Y_{n})_{\zar}, G_n)
\end{eqnarray*}
by  \cite[\href{https://stacks.math.columbia.edu/tag/03DW}{Tag 03DW}]{ref:stacks}. Therefore by (\ref{eq:milnorseq}), 
\[ H^i(\widehat{Y}_{\et}, R\lim G_n) = 0\]
for all $i \geq 2$. To show vanishing when $i = 1$, we must show that
\begin{equation} R^1 \lim H^{0} (\widehat{Y}_{\et}, G_n)  = 0. \label{eq:appsurj} \end{equation}
Now  $Y$ is affine and therefore $H^0(\widehat{Y}_{\et}, G_n) \to H^0(\widehat{Y}_{\et}, G_{n-1})$ is surjective. Hence (\ref{eq:appsurj}) follows and so $G \stackrel{\sim}{\to} R\lim G_n.$ Repeating exactly the same argument, we get that
\[ H^i(\widehat{X}_{\et},G) = 0\]
for all $i > 0$ as desired. \end{proof}

\begin{corollary} \label{cor:vanishinget}
Let $X$ be a locally Noetherian scheme and $G$ a coherent sheaf on $\widehat{X}_{\et}$. Then $R^i\we_\ast G = 0$ for all $i > 0$.
\end{corollary}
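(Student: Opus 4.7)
My plan is to reduce the vanishing of $R^i\we_\ast G$ to the absolute cohomology vanishing of Lemma \ref{lem:vanishinget} via the standard description of higher direct images as sheafifications of cohomology presheaves, combined with Lemma \ref{lem:sheaf}.

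First, I would recall that for the morphism of ringed sites $\we \colon \widehat{X}_{\et} \to \widehat{X}_{\zar}$, the higher direct image $R^i\we_\ast G$ is by definition the sheafification (in the Zariski topology on $X$) of the presheaf
\[ U \longmapsto H^i(u_{\we}(U), G|_{u_{\we}(U)}) = H^i(\widehat{U}_{\et}, G|_{\widehat{U}_{\et}}), \]
where $U$ ranges over Zariski opens of $X$ and $u_{\we}$ is simply the inclusion of the Zariski site into the \'{e}tale site on the level of underlying categories.

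Next, I would take as prebasis $\mathscr{B}$ for the Zariski topology on the locally Noetherian scheme $X$ the collection of affine open subschemes. For any $U \in \mathscr{B}$, the scheme $U$ is affine Noetherian, the pullback of the defining coherent ideal $I$ is still coherent on $U$, and the restriction $G|_{\widehat{U}_{\et}}$ is a coherent sheaf on $\widehat{U}_{\et}$ (coherence being local). Lemma \ref{lem:vanishinget} then applies verbatim and gives
\[ H^i(\widehat{U}_{\et}, G|_{\widehat{U}_{\et}}) = 0 \qquad \text{for all } i > 0. \]

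Finally, since the above presheaf vanishes on every element of the prebasis $\mathscr{B}$, Lemma \ref{lem:sheaf} yields that its sheafification vanishes, i.e.\ $R^i\we_\ast G = 0$ for all $i > 0$. No step here is really an obstacle; the only thing to check carefully is that restriction of a coherent sheaf on $\widehat{X}_{\et}$ to the \'{e}tale site of an affine open remains coherent, which is immediate from the local nature of coherence, so Lemma \ref{lem:vanishinget} can indeed be applied on affine opens.
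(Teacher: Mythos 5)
Your proof is correct and is the standard argument the paper leaves implicit: identify $R^i\we_\ast G$ as the Zariski sheafification of $U \mapsto H^i(\widehat{U}_{\et}, G)$, observe this presheaf vanishes on affine opens by Lemma~\ref{lem:vanishinget}, and then invoke Lemma~\ref{lem:sheaf} with affine opens as the prebasis. This matches exactly what the paper intends by presenting the statement as an immediate corollary of Lemma~\ref{lem:vanishinget}.
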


\begin{corollary} \label{cor:gagatop}
Let $X$ be a locally Noetherian scheme and $F$ a coherent sheaf on $\widehat{X}_{\zar}$. Then 
\[ H^i(\widehat{X}_{\zar}, F) = H^i(\widehat{X}_{\et}, \we^\ast F)\]
for all $n \geq 0.$
\end{corollary}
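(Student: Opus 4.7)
The plan is to compute $H^i(\widehat{X}_{\et}, \we^\ast F)$ via the Leray spectral sequence for the flat morphism of ringed sites $\we \colon \widehat{X}_{\et} \to \widehat{X}_{\zar}$, namely
\[ E_2^{p,q} = H^p(\widehat{X}_{\zar}, R^q \we_\ast \we^\ast F) \Rightarrow H^{p+q}(\widehat{X}_{\et}, \we^\ast F). \]
The strategy is to show that this spectral sequence degenerates at $E_2$ along the row $q = 0$, with $\we_\ast \we^\ast F = F$, which immediately gives the claimed identification $H^i(\widehat{X}_{\zar}, F) \cong H^i(\widehat{X}_{\et}, \we^\ast F)$ for all $i \geq 0$.

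For the higher direct images, I would first invoke Proposition \ref{prop:gagatopology} to conclude that $\we^\ast F$ is a coherent sheaf on $\widehat{X}_{\et}$. Then Corollary \ref{cor:vanishinget}, applied to $G = \we^\ast F$, yields $R^q \we_\ast \we^\ast F = 0$ for all $q > 0$, which collapses the spectral sequence onto the row $q = 0$. So the only genuine input is that the higher cohomology of a coherent sheaf on $\widehat{X}_{\et}$ vanishes after restriction to an affine, which is exactly what Lemma \ref{lem:vanishinget} provides.

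For the identification $\we_\ast \we^\ast F = F$ on the bottom row, the cleanest route is to observe that $\we^\ast \colon \operatorname{Coh}(\widehat{X}_{\zar}) \to \operatorname{Coh}(\widehat{X}_{\et})$ is an equivalence of categories by Proposition \ref{prop:gagatopology}. Since $\we^\ast$ is fully faithful between coherent sheaves with an exact right adjoint $\we_\ast$ (here using that $\we_\ast$ preserves coherence on Zariski-locally affines, as a consequence of Lemma \ref{lem:vanishinget} applied in degree $0$), the unit $F \to \we_\ast \we^\ast F$ is an isomorphism; alternatively, one checks this directly on a Zariski basis, since for affine open $U$ we have $(\we_\ast \we^\ast F)(U) = (\we^\ast F)(U) = \varinjlim (\we^\ast F)(V)$ taken over étale covers refining $U$, and for a coherent sheaf on the affine $\widehat{U}_{\et}$ this equals $F(U)$ by the sheaf property combined with the vanishing of $H^0$-level descent obstructions.

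The only mildly delicate step is verifying $\we_\ast \we^\ast F = F$; the higher vanishing is handed to us by Corollary \ref{cor:vanishinget}. Once both ingredients are in place, the Leray spectral sequence gives the result in a single line:
\[ H^i(\widehat{X}_{\et}, \we^\ast F) = H^i(\widehat{X}_{\zar}, \we_\ast \we^\ast F) = H^i(\widehat{X}_{\zar}, F). \]
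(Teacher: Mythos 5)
Your proposal is correct and takes essentially the same approach as the paper: both rewrite $H^i(\widehat{X}_{\et}, \we^\ast F)$ as $H^i(\widehat{X}_{\zar}, R\we_\ast \we^\ast F)$ via Leray, collapse the higher direct images using coherence of $\we^\ast F$ together with Corollary \ref{cor:vanishinget}, and identify $\we_\ast \we^\ast F$ with $F$ via full-faithfulness of $\we^\ast$ from Proposition \ref{prop:gagatopology}. Your extra discussion of the unit map and the direct affine-local check is a reasonable elaboration of what the paper compresses into one sentence.
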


\begin{proof}
We have $H^i(\widehat{X}_{\et}, \we^\ast F) = H^i(\widehat{X}_{\zar}, R\we_\ast \we^\ast F)$. By the coherence of $\we^\ast F$,  Corollary \ref{cor:vanishinget} and the full-faithfulness of $\we^\ast$ (Proposition \ref{prop:gagatopology}) we have $R\we_\ast \we^\ast F = \we_\ast \we^\ast F = F$. The result follows.
\end{proof}

\begin{proposition} \label{prop:pctopapp}
We have natural equivalences of categories
\[      D^-_{\operatorname{coh}}(\widehat{X}_{\zar}) \stackrel{ \we^\ast}{\to} D^-_{\operatorname{coh}}(\widehat{X}_{\et})  \stackrel{\delta^\ast}{\to} D^-_{\operatorname{coh}}(\widehat{X}_{\liset}).\]
\end{proposition}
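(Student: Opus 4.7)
The plan is to apply Theorem \ref{thm:genpc} separately to each of the two flat morphisms of ringed sites $\we$ and $\widehat{\delta}$, taking the weak Serre subcategories $\mathscr{A}_\bullet$ to be the categories of coherent sheaves on each of the three sites $\widehat{X}_{\zar}$, $\widehat{X}_{\et}$, $\widehat{X}_{\liset}$.

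For $\we\colon \widehat{X}_{\et} \to \widehat{X}_{\zar}$, I would take affine opens of $X$ as the prebasis $\mathscr{B}_{\widehat{X}_{\zar}}$ and affine étale $X$-schemes as the prebasis $\mathscr{B}_{\widehat{X}_{\et}}$. Condition (i) is exactly Proposition \ref{prop:gagatopology}, condition (iii) is immediate (an affine open is étale over itself), and condition (iv) is Lemma \ref{lem:vanishinget} applied to each affine étale $X$-scheme. For condition (ii) on Ext-groups, one uses the local-to-global spectral sequence
\[ E_2^{p,q} = H^p\bigl(\uext^q_{\widehat{\mathscr{O}}_X}(F,F')\bigr) \Longrightarrow \operatorname{Ext}^{p+q}(F,F') \]
on both sites. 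Since $\we^\ast$ is an equivalence on $\operatorname{Coh}$ and local Ext sheaves can be computed locally via finite free resolutions, the sheaves $\uext^q$ are preserved by $\we^\ast$; their higher cohomology is preserved by Corollary \ref{cor:gagatop}. Thus Ext is preserved.

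For $\widehat{\delta}\colon \widehat{X}_{\liset} \to \widehat{X}_{\et}$, I would take affine étale $X$-schemes as $\mathscr{B}_{\widehat{X}_{\et}}$ and affine smooth $X$-schemes as $\mathscr{B}_{\widehat{X}_{\liset}}$. Conditions (i) and (iii) are again clear: Proposition \ref{prop:gagatopology} gives the equivalence on coherent hearts, and every affine étale $X$-scheme is an affine smooth $X$-scheme. The main work lies in condition (iv): for a smooth affine $V \to X$ and coherent $G$ on $\widehat{X}_{\liset}$, one must show $H^p(V, G) = 0$ for all $p > 0$. Localizing at $V$ identifies this with the lisse-étale cohomology of a coherent sheaf on $\widehat{V}_{\liset}$, where $V$ is an affine Noetherian scheme. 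By Proposition \ref{prop:gagatopology}, the sheaf is of the form $\widehat{\delta}_V^\ast G_{\et}$ for a coherent sheaf $G_{\et}$ on $\widehat{V}_{\et}$. The key observation is that since smooth surjections étale-locally admit sections, every smooth cover of $V$ can be refined by an étale cover; consequently a Čech-theoretic computation shows the lisse-étale cohomology of $\widehat{\delta}_V^\ast G_{\et}$ agrees with the étale cohomology of $G_{\et}$, which vanishes in positive degrees by Lemma \ref{lem:vanishinget}. Condition (ii) is then handled in complete analogy with the Zariski case, using the lisse-étale cohomological comparison just established.

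The main obstacle I expect is precisely this cohomological comparison between lisse-étale and étale topologies for coherent sheaves on an affine Noetherian scheme that is needed in condition (iv) for $\widehat{\delta}$; the Zariski-to-étale comparison for $\we$ is essentially a direct consequence of results already recorded in the appendix. Once this last vanishing is in hand, both applications of Theorem \ref{thm:genpc} go through and yield the two claimed equivalences of triangulated categories.
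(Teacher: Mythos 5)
For the first equivalence $\we^\ast$ your plan is essentially what the paper does: apply Theorem \ref{thm:genpc} with coherent sheaves as weak Serre subcategories, verify condition (i) via Proposition \ref{prop:gagatopology}, condition (iii) since an open immersion is \'etale, condition (iv) via Lemma \ref{lem:vanishinget}, and condition (ii) by first comparing the local $\uext$ sheaves and then invoking Corollary \ref{cor:gagatop} plus the local-to-global spectral sequence. The paper references the argument of EGA 0\textsubscript{III}, 12.3.3--4 and EGA III\textsubscript{1}, 4.5.2 for this last step; your description is consistent with that.

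Where you diverge from the paper is in the treatment of $\widehat{\delta}^\ast$. You propose to run Theorem \ref{thm:genpc} a second time, with the main burden being condition (iv), which you intend to establish via a \v{C}ech comparison between the lisse-\'etale and \'etale cohomologies on an affine Noetherian scheme. The paper instead simply cites Laumon--Moret-Bailly, Proposition 12.7.4: since $X$ is a scheme (hence a Deligne--Mumford stack), $\widehat{\delta}$ induces an \emph{equivalence of ringed topoi} $(\operatorname{Sh}(X_{\et}), \widehat{\mathscr{O}}_{X_{\et}}) \to (\operatorname{Sh}(X_{\liset}), \widehat{\mathscr{O}}_{X_{\liset}})$, after which $\widehat{\delta}^\ast$ is automatically an equivalence on every derived category of modules and in particular restricts to one on $D^-_{\coh}$. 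That single citation immediately gives all of (i)--(iv), or indeed makes Theorem \ref{thm:genpc} unnecessary for this half. Your approach is therefore much more labor-intensive than it needs to be, and your sketch contains a real gap: you assert that localizing $\widehat{X}_{\liset}$ at a smooth affine $V\to X$ identifies the relevant cohomology with that on $\widehat{V}_{\liset}$, but the slice category $\widehat{X}_{\liset}/V$ consists of smooth $X$-schemes $W$ equipped with an $X$-map $W\to V$, and this map need not be smooth; $\widehat{X}_{\liset}/V$ is not literally the lisse-\'etale site of $V$. The cofinality observation (smooth surjections admit \'etale-local sections, so \'etale covers refine smooth covers) is indeed the right germ of the proof of the LMB equivalence, but making it precise in the generality you need would be re-proving that result from scratch. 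Invoke the topos equivalence and you are done.
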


\begin{proof}
We first prove that $\we^\ast$ is an equivalence using Theorem \ref{thm:genpc}. Condition (\ref{heart}) is Proposition \ref{prop:gagatopology} above. Condition (\ref{vanishing}) is Lemma \ref{lem:vanishinget} above and condition (\ref{objects}) is true because an open immersion is an \'{e}tale morphism. It remains to check Condition (\ref{ext}). Choose $F,F'\in \operatorname{Coh}(\xhzar)$. Then following \cite[0\textsubscript{III}, 12.3.3, 12.3.4]{ref:ega}, one proves that
\[ \we^\ast \uext^n_{\xhzar}(F,F') \stackrel{\sim}{\to} \uext^n_{\widehat{X}_{\et}}(\we^\ast F, \we^\ast F')\]
is an isomorphism. Now argue using this isomorphism, Corollary \ref{cor:gagatop} and the local-to-global spectral sequence as in \cite[III\textsubscript{1}, 4.5.2]{ref:ega} that 
\[ \operatorname{Ext}_{\xhzar}^n(\mathscr{F},\mathscr{G}) \stackrel{\sim}{\to} \operatorname{Ext}_{\widehat{X}_{\et}}^n(\we^\ast \mathscr{F}, \we^\ast\mathscr{G}).\]
All the conditions of Theorem \ref{thm:genpc} are satisfied and therefore \[ \we^\ast \colon D^-_{\operatorname{coh}}(\xhzar) \to D^-_{\operatorname{coh}}(\widehat{X}_{\et}) \] is an equivalence. Finally, the equivalence
\begin{equation} \widehat{\delta}^\ast \colon D^-_{\operatorname{coh}}(\widehat{X}_{\et})  \stackrel{\sim}{\to} D^-_{\operatorname{coh}}(\widehat{X}_{\liset}) \label{eq:hd} \end{equation}
 is similarly shown. We leave this to the reader; the key point being \cite[Proposition 12.7.4]{ref:lmb} which states that $\widehat{\delta}$ induces an equivalence of ringed topoi
\[ (\operatorname{Sh}(X_{\et}), \widehat{\mathscr{O}}_{X_{\et}}) \stackrel{\sim}{\to}  (\operatorname{Sh}(X_{\liset}), \widehat{\mathscr{O}}_{X_{\liset}}).\]
\end{proof}

\begin{corollary} \label{cor:appzaret}
Let $P_n$ be an inverse system of objects in $D^-_{\operatorname{coh}}(\xhzar)$ such that $R\lim P_n \in D^-_{\operatorname{coh}}(\xhzar)$. Then the canonical morphism 
\[ \we^\ast R\lim P_n \to R\lim \we^\ast P_n\]
is an isomorphism in $D(\widehat{X}_{\et})$.
\end{corollary}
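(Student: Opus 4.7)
My plan is to construct the canonical comparison map $\phi \colon \we^\ast R\lim P_n \to R\lim \we^\ast P_n$, show that applying $R\we_\ast$ to $\phi$ yields the identity on $R\lim P_n$, and then conclude by showing both source and target lie in the subcategory $D^-_{\operatorname{coh}}(\widehat{X}_{\et})$, where $\we^\ast$ is an equivalence by Proposition \ref{prop:pctopapp} so that $R\we_\ast$ is conservative.

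First, the map $\phi$ is obtained by applying $\we^\ast$ to the projections $R\lim P_n \to P_n$ and invoking the universal property of the derived limit. Since derived pushforward commutes with derived limits (\cite[\href{https://stacks.math.columbia.edu/tag/0A07}{Tag 0A07}]{ref:stacks}), we have
\[ R\we_\ast R\lim \we^\ast P_n \cong R\lim R\we_\ast \we^\ast P_n. \]
By the fully faithful part of Proposition \ref{prop:pctopapp}, $R\we_\ast \we^\ast P_n \cong P_n$, so the right hand side equals $R\lim P_n$. Similarly $R\we_\ast \we^\ast R\lim P_n \cong R\lim P_n$, and naturality then identifies $R\we_\ast \phi$ with the identity of $R\lim P_n$.

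The remaining step is to show $R\lim \we^\ast P_n \in D^-_{\operatorname{coh}}(\widehat{X}_{\et})$. Exactness of $\we^\ast$ (flatness) gives $H^j(\we^\ast P_n) = \we^\ast H^j(P_n)$, and the Milnor exact sequence reads
\[ 0 \to R^1 \lim \we^\ast H^{i-1}(P_n) \to H^i(R\lim \we^\ast P_n) \to \lim \we^\ast H^i(P_n) \to 0. \]
The analogous Milnor sequence on $\widehat{X}_{\zar}$ has coherent middle term $H^i(R\lim P_n)$ by hypothesis, so its outer terms $\lim H^i(P_n)$ and $R^1\lim H^{i-1}(P_n)$ are coherent as sub- and quotient-sheaves of coherent sheaves on the locally Noetherian $\widehat{X}_{\zar}$. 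I would next show that the canonical maps $\we^\ast \lim H^i(P_n) \to \lim \we^\ast H^i(P_n)$ and $\we^\ast R^1\lim H^{i-1}(P_n) \to R^1\lim \we^\ast H^{i-1}(P_n)$ are isomorphisms by evaluating on affine étale opens $\widehat{U}$: the global section functor commutes with inverse limits, and sections of $\we^\ast F$ agree with those of $F$ by the coherent-sheaf equivalence (Proposition \ref{prop:gagatopology}); the case of $R^1\lim$ requires Lemma \ref{lem:vanishinget} to identify $R^1\lim$ locally with the module-theoretic $R^1\lim$ of sections. The outer terms then become coherent on $\widehat{X}_{\et}$, hence so is the middle, and a five-lemma argument applied to the $\we^\ast$ of the Zariski Milnor sequence versus the étale Milnor sequence shows $\phi$ is an isomorphism on cohomology sheaves.

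The main obstacle is the commutation of $\we^\ast$ with $\lim$ and $R^1\lim$ of the relevant coherent inverse systems: pullback along a morphism of ringed sites does not commute with infinite limits in general. The argument relies crucially on the coherent-sheaf equivalence of Proposition \ref{prop:gagatopology} together with the vanishing of higher cohomology of coherent sheaves on affine étale opens (Lemma \ref{lem:vanishinget}), reducing the commutation to a statement about inverse limits of finitely generated modules over a Noetherian ring, where the Artin--Rees lemma and Mittag-Leffler-type arguments take over.
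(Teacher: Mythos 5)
Your strategy is genuinely different from the paper's. The paper establishes the isomorphism directly by comparing $R\Gamma$ of both sides over an arbitrary object $\widehat{U}_{\et}$: it writes $R\Gamma(\widehat{U}_{\et}, -) = R\Hom(\mathscr{O}_{\widehat{U}_{\et}}, -) = R\Hom(\we^\ast\mathscr{O}_{\widehat{U}_{\zar}}, -)$, invokes the full faithfulness part of Proposition \ref{prop:pctopapp} (once for $R\lim P_n$ — this is precisely where the hypothesis $R\lim P_n \in D^-_{\coh}(\xhzar)$ enters — and once for each $P_n$) to transport the $R\Hom$ between the two ringed sites, and uses that $R\Hom$ commutes with derived limits in the second variable. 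That chain of equalities is the entire proof; it requires no coherence check on $R\lim\we^\ast P_n$ and no five lemma. Your plan — show $R\we_\ast\phi$ is an isomorphism and then conclude by conservativity of $R\we_\ast$ restricted to $D^-_{\coh}(\widehat{X}_{\et})$ — is sound in outline (the first part is correct: $R\we_\ast$ commutes with $R\lim$ and $R\we_\ast\we^\ast \simeq \mathrm{id}$ on $D^-_{\coh}$), but it forces you to prove the extra input that $R\lim\we^\ast P_n$ lies in $D^-_{\coh}(\widehat{X}_{\et})$, which the paper simply never needs.

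That extra input is exactly where your sketch has a genuine gap. The Milnor exact sequence
\[0 \to R^1\lim H^{i-1}(K_n) \to H^i(R\lim K_n) \to \lim H^i(K_n) \to 0\]
with sheaf-theoretic $\lim$ and $R^1\lim$ is \emph{not} valid on a general ringed site: the category of $\mathscr{O}$-modules on a site is not AB4* (countable products are not exact), so $R^p\lim$ for $p \geq 2$ need not vanish and the cohomology of a derived limit is not controlled by $\lim$ and $R^1\lim$ of the cohomology sheaves alone. (In the body of the paper, Milnor sequences are only ever invoked after passing to global sections over affines, where one is back in abelian groups; and the coherence of $R\lim$ of an adic pseudo-coherent system is established in the proof of Theorem \ref{thm:existencepc} by a reduction to honest complexes of finite free modules, not via a sheaf-level Milnor sequence.) Moreover, the commutation of $\we^\ast$ with $\lim$ and $R^1\lim$ of the coherent inverse systems $\{H^i(P_n)\}$, which you flag as the "main obstacle," is a statement of roughly the same strength as the corollary itself and would require essentially the same kind of $R\Gamma$-comparison argument that the paper uses; invoking Artin--Rees and Mittag--Leffler does not obviously close this. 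So as written, the argument for $R\lim\we^\ast P_n \in D^-_{\coh}(\widehat{X}_{\et})$ does not go through, and the proof should instead be organized around the paper's direct $R\Hom$ transport.
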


\begin{proof}
Let $\widehat{U}_{\et}$ be any object of $\widehat{X}_{\et}$. Then we have
\begin{eqnarray*} R\Gamma(\widehat{U}_{\et}, \we^\ast R\lim P_n) &=&R \operatorname{Hom}_{\mathscr{O}_{\widehat{U}_{\et}}}(\mathscr{O}_{\widehat{U}_{\et}}, \we^\ast R\lim P_n) \\
&=& R\Hom_{\mathscr{O}_{\widehat{U}_{\et}}}(\we^\ast \mathscr{O}_{\widehat{U}_{\zar}}, \we^\ast R\lim P_n) \\
&=&  R\Hom_{\mathscr{O}_{\widehat{U}_{\zar}}}(\mathscr{O}_{\widehat{U}_{\zar}}, R\lim P_n)  \hspace{1.4cm} \text{(Proposition \ref{prop:pctopapp}) }\\
&=& R\lim  R\Hom_{\mathscr{O}_{\widehat{U}_{\zar}}}(\mathscr{O}_{\widehat{U}_{\zar}}, P_n) \\
&=& R\lim  R\Hom_{\mathscr{O}_{\widehat{U}_{\et}}}  (\we^\ast \mathscr{O}_{\widehat{U}_{\zar}}, \we^\ast P_n)  \hspace{1cm} \text{(Proposition \ref{prop:pctopapp})} \\
&=& R\Hom_{\mathscr{O}_{\widehat{U}_{\et}}}(\we^\ast \mathscr{O}_{\widehat{U}_{\zar}},R\lim \we^\ast P_n) \\
&=& R\Hom_{\mathscr{O}_{\widehat{U}_{\et}}}(\mathscr{O}_{\widehat{U}_{\et}},R\lim \we^\ast P_n) \\
&=& R\Gamma(\widehat{U}_{\et}, R\lim \we^\ast P_n).
\end{eqnarray*}
We used the fact that $R\lim P_n \in D^-_{\operatorname{coh} }(\xhzar)$ to pass from the second to third equality. Since $\widehat{U}_{\et}$ was arbitary, we are done.
\end{proof}

\begin{remark} \label{cor:applisetet}
Let $P_n$ be an inverse system of objects in $D^-_{\operatorname{coh}}(\widehat{X}_{\et})$. Consider the morphism of sites $\widehat{\delta} \colon \widehat{X}_{\liset} \to \widehat{X}_{\et}$. This morphsim is cocontinuous by  \cite[\href{https://stacks.math.columbia.edu/tag/0788}{Tag 0788}]{ref:stacks}. It follows the canonical morphism $\widehat{\delta}^\ast R\lim P_n \to R\lim \widehat{\delta}^\ast P_n$ is an isomorphism $D(\widehat{X}_{\liset})$.
\end{remark}

\begin{proposition} \label{prop:tordimapp}
Let $X$ be a locally Noetherian scheme and let  $\we \colon  \widehat{X}_{\et} \to \xhzar$, $ \hd \colon  \widehat{X}_{\liset}\to \widehat{X}_{\et}$ denote the canonical morphisms of sites. An object $P \in D(\widehat{X}_{\et})$, has finite Tor dimension if and only if the same is true of $\we^\ast P$. Similarly, $Q \in D(\widehat{X}_{\liset})$ has finite Tor dimension if and only if the same is true of $\hd^\ast Q$.
\end{proposition}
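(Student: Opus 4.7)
The plan is to prove each direction by reducing to properties of the pullback functor, with all the work concentrated in a faithfulness statement. I focus on $\we$; the argument for $\hd$ is completely analogous, substituting the étale topology for the Zariski topology. For the forward direction, I would invoke the general fact that pullback along a flat morphism of ringed sites preserves Tor amplitude. Concretely, if $P$ has Tor amplitude in $[a,b]$, choose a $K$-flat representative concentrated in degrees in the corresponding interval; since $\we^{-1}$ is exact and the induced ring map $\we^{-1}\mathscr{O}_{\xhzar} \to \mathscr{O}_{\widehat{X}_{\et}}$ is flat, pulling back term-by-term yields a bounded complex of flat $\mathscr{O}_{\widehat{X}_{\et}}$-modules concentrated in $[a,b]$ representing $\we^\ast P$, so $\we^\ast P$ has Tor amplitude in $[a,b]$ as well.

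The reverse direction is where the real work lies. Suppose $\we^\ast P$ has Tor amplitude in $[a,b]$. For any $\mathscr{O}_{\xhzar}$-module $F$, flatness of $\we$ yields
\[
\we^\ast(P \otimes^{\operatorname{L}}_{\mathscr{O}_{\xhzar}} F) \simeq \we^\ast P \otimes^{\operatorname{L}}_{\mathscr{O}_{\widehat{X}_{\et}}} \we^\ast F,
\]
whose cohomology is concentrated in $[a,b]$ by assumption. Since $\we^\ast$ is exact, it commutes with cohomology, giving $\we^\ast H^i(P \otimes^{\operatorname{L}} F) = 0$ for every $i \notin [a,b]$. To upgrade this to the vanishing of $H^i(P \otimes^{\operatorname{L}} F)$ itself, I would prove that $\we^\ast$ is \emph{faithful} on $\mathscr{O}_{\xhzar}$-modules, and then apply this at each $i \notin [a,b]$.

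The main obstacle will be verifying this faithfulness. The key input is that the underlying functor of sites sends a Zariski open $V$ of $X$ to the same object viewed as an étale neighborhood, and by construction of the completed structure sheaf one has $\mathscr{O}_{\widehat{X}_{\et}}(V) = \mathscr{O}_{\xhzar}(V)$ on such $V$. A direct calculation with the explicit description of $\we^{-1}$ as a sheafified left Kan extension then shows $(\we^\ast \mathscr{G})(V) = \mathscr{G}(V)$ for every Zariski open $V$, so vanishing of $\we^\ast \mathscr{G}$ forces vanishing of all Zariski sections of $\mathscr{G}$, and hence of $\mathscr{G}$ itself. The parallel statement for $\hd$ follows by the same argument, using that every étale morphism is smooth so that the same compatibility of structure sheaves on the appropriate prebasis (as in Proposition \ref{prop:pctopapp}) transfers cleanly.
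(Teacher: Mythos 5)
Your overall decomposition — the easy direction via flatness of the pullback, the hard direction via faithfulness of $\we^\ast$ — is the right shape, and is presumably the content of the Stacks Project reference the paper invokes. But the justification of the faithfulness step has a real gap.

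The claim that $(\we^\ast \mathscr{G})(V) = \mathscr{G}(V)$ for every Zariski open $V$ does not follow from "a direct calculation with the sheafified left Kan extension," because $\we^\ast \mathscr{G}$ is not just $\we^{-1}\mathscr{G}$: it is $\we^{-1}\mathscr{G} \otimes_{\we^{-1}\widehat{\mathscr{O}}_{\xhzar}} \widehat{\mathscr{O}}_{\widehat{X}_{\et}}$, involving a sheaf tensor product that is itself a sheafification of a presheaf tensor product. Sheafification is taken with respect to the finer (étale) topology, whose covers of a Zariski open $V$ are genuinely more plentiful than Zariski covers, so there is no general reason for sections over $V$ to survive both sheafification steps unchanged. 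The equality you want is equivalent to the assertion that the adjunction unit $\operatorname{id} \to \we_\ast \we^\ast$ is an isomorphism on arbitrary (not merely coherent) $\widehat{\mathscr{O}}_{\xhzar}$-modules — a genuine theorem, not a calculation, and one that the paper does not establish (Proposition~\ref{prop:gagatopology} only addresses $\operatorname{Coh}$). Note also that what you actually need is weaker: injectivity of the unit would suffice to conclude $\we^\ast \mathscr{G} = 0 \Rightarrow \mathscr{G} = 0$, but even that requires an argument.

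The cleaner route to faithfulness, and almost certainly what the cited Stacks Project lemma actually does, is to compute stalks: at a geometric point $\bar{x}$ lying over a point $x$, one has $(\we^\ast \mathscr{G})_{\bar{x}} \cong \mathscr{G}_x \otimes_{\widehat{\mathscr{O}}_{\xhzar,x}} \widehat{\mathscr{O}}_{\widehat{X}_{\et},\bar{x}}$, and the ring map on completed stalks is a flat local homomorphism of local rings, hence faithfully flat. This yields faithfulness of $\we^\ast$ on all modules at once, and it feeds directly into the Tor-amplitude argument since both sites have enough points. (The identical pattern handles $\hd$.) A minor further point on the easy direction: the characterization you want is "Tor amplitude in $[a,b]$ iff representable by a complex of \emph{flat} modules concentrated in $[a,b]$" (such a complex is automatically $K$-flat because it is bounded); a $K$-flat representative concentrated in $[a,b]$ is not the same thing, though the slip is harmless here since you only use flatness of the terms.
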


\begin{proof}
Follows analogously as in \cite[\href{https://stacks.math.columbia.edu/tag/08HF}{Tag 08HF}]{ref:stacks}. \end{proof}

\section{Derived Tensor Product and Sheafification}
Let $\mathscr{A}$ be an abelian category. Recall that a complex $I^{\bullet}$ is said to be \textit{$K$-injective} if for any acyclic complex $A^{\bullet}$, 
\[ \operatorname{Hom}_{K(\mathscr{A})}(A^{\bullet}, I^{\bullet}) = 0,\]
where $K(\mathscr{A})$ is the homotopy category. Recall also that a complex $F^{\bullet}$ is said to be \textit{$K$-flat} if for any acyclic complex $A^{\bullet}$, the complex
\[ \operatorname{Tot}(A^{\bullet} \otimes F^{\bullet})\]
is acyclic. It is a fact that if $\mathscr{A} = \operatorname{Mod}(\mathscr{O}_X)$, the category of sheaves of modules on a ringed site $(X,\mathscr{O}_X)$, then every complex has a quasi-isomorphism to a $K$-injective one \cite[\href{https://stacks.math.columbia.edu/tag/01DU}{Tag 01DU}]{ref:stacks}, and a quasi-isomorphism from a $K$-flat one \cite[\href{https://stacks.math.columbia.edu/tag/06YS}{Tag 06YS}]{ref:stacks}. The same is true for the category of presheaves, $\operatorname{PMod}(\mathscr{O}_X)$.
\medskip

There is an adjoint pair
\[ (-)^{\#} \colon \operatorname{PMod}(\mathscr{O}_X) \rightleftarrows\operatorname{Mod}(\mathscr{O}_X) \colon \fgt\]
where $\fgt$ is the forgetful functor and $(-)^{\#}$ denotes sheafification. The forgetful functor is left exact, while sheafification is exact. Since $\operatorname{Mod}(\mathscr{O}_X)$ is  a Grothendieck abelian category, by \cite[Corollary 3.14]{ref:serpe} this extends to an adjoint pair 
\[ (-)^{\#} \colon D(\operatorname{PMod}(\mathscr{O}_X)) \rightleftarrows D(\operatorname{Mod}(\mathscr{O}_X)) \colon R\fgt.\]

\begin{lemma} \label{lem:apptricky}
Let $(X,\mathscr{O}_X)$ be a ringed site and $F^{\bullet} \to J^{\bullet}$ a quasi-isomorphism of complexes of $\mathscr{O}_X$-modules. Then for any $K$-injective complex $I^{\bullet}$, the canonical morphism
\[ \uhom^{\bullet}(\fgt(J^{\bullet}), \fgt(I^{\bullet})) \to   \uhom^{\bullet}(\fgt(F^{\bullet}), \fgt(I^{\bullet}))\]
is a quasi-isomorphism in the homotopy category $K(\operatorname{PMod}(\mathscr{O}_X))$.
\end{lemma}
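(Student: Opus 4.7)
The plan is to reduce the statement to the $K$-injectivity of $I^\bullet$ in the sheaf category, using the fact that the forgetful functor $\fgt$ preserves $K$-injective complexes. Specifically, I will show that for every $U \in \operatorname{Ob}(X)$ and every integer $n$, the map induced on the $n$-th cohomology group of the sections of $\uhom^\bullet(\fgt J^\bullet, \fgt I^\bullet) \to \uhom^\bullet(\fgt F^\bullet, \fgt I^\bullet)$ at $U$ is an isomorphism; this is equivalent to the map of cohomology presheaves being an isomorphism, i.e., to the map being a quasi-isomorphism in $K(\operatorname{PMod}(\mathscr{O}_X))$.

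The key preliminary step is to observe that $\fgt I^\bullet$ is $K$-injective as a complex of presheaves. For any acyclic complex of presheaves $A^\bullet$, the adjunction $(-)^\# \dashv \fgt$ passes to homotopy categories (both functors are additive) and yields
\[ \Hom_{K(\operatorname{PMod}(\mathscr{O}_X))}(A^\bullet, \fgt I^\bullet) = \Hom_{K(\operatorname{Mod}(\mathscr{O}_X))}((A^\bullet)^\#, I^\bullet), \]
and the right side vanishes because sheafification is exact (so $(A^\bullet)^\#$ is still acyclic) and $I^\bullet$ is $K$-injective in the sheaf category. Since the restriction functor $|_U$ to the localized site $X/U$ is right adjoint to the exact ``extension by zero'' functor $j_{U,!}$ in both the sheaf and presheaf categories, it preserves $K$-injectivity and acyclicity. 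Hence $I^\bullet|_U$ is $K$-injective as a sheaf on $X/U$, and re-running the argument above on $X/U$ shows that $\fgt(I^\bullet|_U) = \fgt(I^\bullet)|_U$ is $K$-injective as a complex of presheaves on $X/U$.

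With this in hand, for each $U$ the $n$-th cohomology of $\uhom^\bullet(\fgt F^\bullet, \fgt I^\bullet)(U)$ equals $\Hom_{K(\operatorname{PMod}(\mathscr{O}_X|_U))}(\fgt F^\bullet|_U, \fgt I^\bullet|_U[n])$, which by the $K$-injectivity of $\fgt I^\bullet|_U$ in the presheaf category agrees with the corresponding $\Hom$ in the derived category $D(\operatorname{PMod}(\mathscr{O}_X|_U))$; applying the derived adjunction for $(-)^\# \dashv \fgt$ on $X/U$ then identifies this with $\Hom_{D(\operatorname{Mod}(\mathscr{O}_X|_U))}(F^\bullet|_U, I^\bullet|_U[n])$. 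The identical identifications with $F^\bullet$ replaced by $J^\bullet$ reduce everything to the claim that the map $\Hom_{D(\operatorname{Mod}(\mathscr{O}_X|_U))}(J^\bullet|_U, I^\bullet|_U[n]) \to \Hom_{D(\operatorname{Mod}(\mathscr{O}_X|_U))}(F^\bullet|_U, I^\bullet|_U[n])$ is an isomorphism, which is automatic since restriction is exact and so $F^\bullet|_U \to J^\bullet|_U$ remains a quasi-isomorphism of sheaves.

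The conceptual content is modest; the only real work lies in keeping track of the various commutation isomorphisms ($\fgt$ and $|_U$ commute, sheafification and $|_U$ commute in the appropriate sense, and shifts commute with all of the above). These are standard, and the main obstacle, such as it is, is ensuring that the identifications in the last paragraph really do assemble into a natural map of cohomology presheaves that coincides with the one induced by functoriality of $\uhom^\bullet(-,\fgt I^\bullet)$.
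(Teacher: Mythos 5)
Your proof is correct and rests on the same central idea as the paper's: compute the $n$-th cohomology of the sections over $U$ of the $\uhom$-complex, turn it into a $\operatorname{Hom}$ in the appropriate homotopy category over $X/U$, and transport across the adjunction $(-)^{\#} \dashv \fgt$ so that the $K$-injectivity of $I^{\bullet}$ in the sheaf category does the work.

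The organization differs in two small but noteworthy ways. First, the paper replaces the map by its cone $C^{\bullet}$ and proves that $\uhom^{\bullet}(\fgt C^{\bullet}, \fgt I^{\bullet})$ is acyclic, whereas you compare the two sides directly. Second, the paper stays at the homotopy-category level throughout: after applying $(-)^{\#} \dashv \fgt$ it further invokes $j_! \dashv j^*$ to move the problem back to $X$, where $j_! j^* C^{\bullet}$ is acyclic and $I^{\bullet}$ is $K$-injective. You instead remain on $X/U$ and pass through derived categories, which forces you to first verify the auxiliary facts that $\fgt$ and $j^*$ preserve $K$-injectives (both true, as they are right adjoints of exact functors). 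The paper's route is marginally leaner because it never needs $K$-injectivity of $\fgt(I^{\bullet})$ in the presheaf category for this lemma, reserving that observation for the subsequent lemma; your route front-loads it. Both arguments are sound, of comparable length, and funnel through the identical adjunction-plus-$K$-injectivity mechanism, so the choice is a matter of taste.
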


\begin{proof}
We begin by recalling the following fact. Any additive functor between abelian categories $\mathscr{A} \to \mathscr{B}$ induces an exact functor $K(\mathscr{A}) \to K(\mathscr{B})$.
Now let $C^{\bullet}$ be the cone of $F^{\bullet} \to J^{\bullet}$. Applying $\fgt(-)$ and then $\uhom^{\bullet}( -, \fgt(I^{\bullet}))$, we get the following exact triangle in $K(\operatorname{PMod}(\mathscr{O}_X))$:
\[\uhom^{\bullet}(\fgt(C^{\bullet}), \fgt(I^{\bullet})) \to      \uhom^{\bullet}(\fgt(J^{\bullet}), \fgt(I^{\bullet})) \to \uhom^{\bullet}(\fgt(F^{\bullet}), \fgt(I^{\bullet})). \]
 To prove the lemma, we must show for every object $j\colon U \to X$ and integer $n \in \mathbf{Z}$ that 
\begin{equation} \label{eq:bzero} H^n( \Gamma(U, \uhom^\bullet (\fgt(C^{\bullet}),\fgt(I^{\bullet}))))= 0.\end{equation}
 We have
\begin{eqnarray*} H^n( \Gamma(U, \uhom^\bullet (\fgt(C^{\bullet}),\fgt(I^{\bullet})))) &=& \Hom_{K(\operatorname{PMod}(\mathscr{O}_U))}( \fgt(j^\ast C^{\bullet}), \fgt(j^\ast I^{\bullet})[n]) \\
&=&  \Hom_{K(\operatorname{PMod}(\mathscr{O}_U))}( \fgt(j^\ast C^{\bullet}), \fgt(j^\ast I^{\bullet}[n])) \\
&=& \Hom_{K(\operatorname{Mod}(\mathscr{O}_U))}( \fgt(j^\ast C^{\bullet})^{\#}, (j^\ast I^{\bullet}[n])) \\
&=&  \Hom_{K(\operatorname{Mod}(\mathscr{O}_U))}( j^\ast C^{\bullet}, j^\ast I^{\bullet}[n]) \\
&=&  \Hom_{K(\operatorname{Mod}(\mathscr{O}_U))}( j_!j^\ast C^{\bullet}, I^{\bullet}[n]) .
\end{eqnarray*}
We used adjunction between $\fgt$ and $(-)^{\#}$ in the third equality. In the fourth equality, we used the fact that forgetting and then sheafifiying is the same as the identity! Now $j_! j^\ast C^{\bullet}$ is acyclic since $j^\ast,j_!$ are exact and $C^\bullet$ is acyclic. It follows by definition of being $K$-injective that 
\[ \Hom_{K(\operatorname{Mod}(\mathscr{O}_U))}(j_! j^\ast C^{\bullet},  I^{\bullet}[n]) = 0.\]
Therefore (\ref{eq:bzero}) is zero and we win. \end{proof}

\begin{lemma}
Let $(X,\mathscr{O}_X)$ be a ringed site and $\mathscr{F},\mathscr{G}$ any two objects of $\dm$. Then: 
\[ R\fgt R\underline{\Hom}(\mathscr{F},\mathscr{G}) = R\uhom(R\fgt(\mathscr{F}), R\fgt (\mathscr{G}))\]
in $\dpm$.
\end{lemma}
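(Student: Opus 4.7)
The plan is to compute both sides using carefully chosen resolutions and to reconcile them via the preceding Lemma \ref{lem:apptricky}. I would start by choosing a K-flat resolution $F^\bullet \to \mathscr{F}$ together with a K-injective resolution $\mathscr{G} \to I^\bullet$ in $\operatorname{Mod}(\mathscr{O}_X)$. The key input is that $\uhom^\bullet(F^\bullet, I^\bullet)$ is itself K-injective in $\operatorname{Mod}(\mathscr{O}_X)$: for any acyclic $A^\bullet$, the complex $\operatorname{Tot}(A^\bullet \otimes F^\bullet)$ is acyclic by K-flatness of $F^\bullet$, and then tensor--$\operatorname{Hom}$ adjunction at the complex level combined with K-injectivity of $I^\bullet$ gives
\[ \Hom_{K(\operatorname{Mod}(\mathscr{O}_X))}(A^\bullet, \uhom^\bullet(F^\bullet, I^\bullet)) \;=\; \Hom_{K(\operatorname{Mod}(\mathscr{O}_X))}(\operatorname{Tot}(A^\bullet \otimes F^\bullet), I^\bullet) \;=\; 0. \]
Consequently $R\fgt R\uhom(\mathscr{F}, \mathscr{G})$ is represented by $\fgt\,\uhom^\bullet(F^\bullet, I^\bullet)$, which a direct check at the complex level using the adjunction $(-)^{\#} \dashv \fgt$ identifies with $\uhom^\bullet(\fgt F^\bullet, \fgt I^\bullet)$ in $\operatorname{PMod}(\mathscr{O}_X)$.

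For the right-hand side I would use that $\fgt$, being right adjoint to the exact functor $(-)^{\#}$, preserves K-injective complexes; hence $\fgt I^\bullet$ is K-injective in $\operatorname{PMod}(\mathscr{O}_X)$ and represents $R\fgt \mathscr{G}$. Picking any K-injective resolution $\mathscr{F} \to I_F^\bullet$ in $\operatorname{Mod}(\mathscr{O}_X)$ gives $R\fgt \mathscr{F} = \fgt I_F^\bullet$, so
\[ R\uhom(R\fgt \mathscr{F}, R\fgt \mathscr{G}) \;=\; \uhom^\bullet(\fgt I_F^\bullet, \fgt I^\bullet). \]
Lifting $F^\bullet \to \mathscr{F}$ through $I_F^\bullet$ produces a quasi-isomorphism $F^\bullet \to I_F^\bullet$ of complexes of $\mathscr{O}_X$-modules, to which I would apply Lemma \ref{lem:apptricky} with $I^\bullet$ in the role of the K-injective complex; this yields a quasi-isomorphism
\[ \uhom^\bullet(\fgt I_F^\bullet, \fgt I^\bullet) \;\xrightarrow{\sim}\; \uhom^\bullet(\fgt F^\bullet, \fgt I^\bullet) \]
in $K(\operatorname{PMod}(\mathscr{O}_X))$. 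Chaining with the identifications of the first paragraph produces the desired isomorphism in $\dpm$.

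The main obstacle is passing $\fgt$ past $R\uhom$ without having to take a separate K-injective resolution of $\uhom^\bullet(F^\bullet, I^\bullet)$ in sheaves, since such an auxiliary resolution would not interact transparently with the presheaf side. The K-flat-resolution trick in the first step sidesteps this by producing a K-injective representative of $R\uhom(\mathscr{F}, \mathscr{G})$ directly, so that $R\fgt$ is computed by $\fgt$ applied verbatim. The remaining subtlety is that the K-flat $F^\bullet$ and the K-injective $I_F^\bullet$ determine a priori distinct representatives of $R\fgt \mathscr{F}$ in presheaves (since sheaf quasi-isomorphisms need not survive $\fgt$), and their reconciliation after hitting both with $\uhom^\bullet(-,\fgt I^\bullet)$ is precisely what the preceding Lemma \ref{lem:apptricky} was engineered to supply.
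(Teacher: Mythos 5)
Your proof is correct and follows essentially the same strategy as the paper: compute the left-hand side with a K-flat/K-injective pair $(F^\bullet, I^\bullet)$, observe that $\uhom^\bullet(F^\bullet, I^\bullet)$ is K-injective so that $R\fgt$ is literal $\fgt$, identify $\fgt\,\uhom^\bullet = \uhom^\bullet(\fgt(-),\fgt(-))$, and invoke Lemma~\ref{lem:apptricky} to pass from the K-flat representative to the K-injective one of $R\fgt\mathscr{F}$ on the presheaf side. The only cosmetic difference is that you resolve $\mathscr{F}$ directly by a K-injective $I_F^\bullet$ and lift $F^\bullet \to \mathscr{F}$ through it, whereas the paper takes a K-injective resolution of $F^\bullet$ itself; these are interchangeable.
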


\begin{proof}
Let $F^{\bullet} \to  \mathscr{F}$ be a $K$-flat resolution of $\mathscr{F}$ and $\mathscr{G} \to I^{\bullet}$ a  $K$-injective resolution of $\mathscr{G}$. Then the object $R\uhom(\mathscr{F},\mathscr{G}) $ can be represented in $\dm$ by the  complex $\uhom^{\bullet} (F^{\bullet}, I^{\bullet})$, with $n$-th term of the form
\[ \uhom^n (F^{\bullet}, I^{\bullet}) = \prod_{n=p+q} \uhom(F^{-q}, I^p)\]
with differential given by $d(f) = d_F\circ f - (-1)^n f \circ d_I.$ Furthermore, given any acyclic complex $A^{\bullet}$, we have
\begin{eqnarray*} \Hom_{K(\operatorname{Mod}(\mathscr{O}_X))}(A^{\bullet}, \uhom^{\bullet} (F^{\bullet}, I^{\bullet}))&=& \Hom_{K(\operatorname{Mod}(\mathscr{O}_X))} (\operatorname{Tot}(A^{\bullet} \otimes F^{\bullet}), I^{\bullet}) \\
&=& 0
\end{eqnarray*}
since $F^{\bullet}$ is $K$-flat and $I^{\bullet}$ is $K$-injective. The upshot is that $\uhom^{\bullet} (F^{\bullet}, I^{\bullet})$ is $K$-injective and therefore
\begin{equation} \label{eq:b1} R \fgt R\uhom(\mathscr{F},\mathscr{G})  = \fgt\left(\uhom^{\bullet} (F^{\bullet}, I^{\bullet})\right) \end{equation}
in $\dpm$.
\medskip

On the other hand, let $F^{\bullet} \to J^{\bullet}$ be a $K$-injective resolution of $F^{\bullet}$. By the previous lemma, we have
\[ \uhom^{\bullet}(\fgt(F^{\bullet}), \fgt(I^{\bullet})) = \uhom^{\bullet}(\fgt(J^{\bullet}), \fgt(I^{\bullet}))\]
in the \textit{derived category} $\dpm$.  Now recall that $\fgt$ is right adjoint to the exact functor $(-)^{\#}$ and thus commutes with arbitrary limits, hence
 \begin{equation} \label{eq:b2}   \fgt\left(\uhom^{\bullet} (F^{\bullet}, I^{\bullet})\right) =  \fgt\left(\uhom^{\bullet} (J^{\bullet}, I^{\bullet})\right).  \end{equation}
 By assumption that $I^{\bullet}, J^\bullet$ are $K$-injective, we have 
  \begin{eqnarray*} R\fgt(\mathscr{G}) &=& \fgt(I^{\bullet}), \\
   R\fgt(\mathscr{F}) &=& R\fgt(F^\bullet) =  \fgt(J^{\bullet}) \end{eqnarray*}
  in $\dpm$ \footnote{The careful reader will note it is important these equalities are in the derived category and \textit{not} the homotopy category.}. Furthermore, $R\fgt(I^{\bullet})$ is $K$-injective because $\fgt$ preserves $K$-injectives and therefore 
\begin{equation} \label{eq:b3} \uhom^{\bullet} (\fgt(J^{\bullet}),\fgt( I^{\bullet})) =  R\uhom(R\fgt(\mathscr{F}), R\fgt(\mathscr{G}))    \end{equation}
in $\dpm$. Now combine (\ref{eq:b1}), (\ref{eq:b2}) and (\ref{eq:b3}) to deduce the result.
\end{proof}

\begin{remark}
The reader may question the necessity of Lemma \ref{lem:apptricky} to conclude that (\ref{eq:b2}) is true. Indeed, it is tempting to argue that because $\uhom^{\bullet}(C^{\bullet}, I^{\bullet}) $ is quasi-isomorphic to zero that the same is true of $\fgt (\uhom^{\bullet}(C^{\bullet}, I^{\bullet}))$. This however is absolutely false because an exact sequence of sheaves need not be exact on sections.
\end{remark}

\begin{proposition} \label{prop:appsheafification}
Let $\mathscr{F},\mathscr{G}$ be any two objects of $D(\operatorname{Mod}(\mathscr{O}_X))$. Then \[\mathscr{F} \otimes^{\operatorname{L}} \mathscr{G} = (R\fgt(\mathscr{F}) \otimes^{\operatorname{L}} R\fgt(\mathscr{G}))^\#.\] 
\end{proposition}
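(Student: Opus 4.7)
My plan is to prove the identity by a Yoneda-style argument in $D(\operatorname{Mod}(\mathscr{O}_X))$, using the derived adjunction $(-)^{\#} \dashv R\fgt$ together with the previous lemma on $R\uhom$ and $R\fgt$. Fix an arbitrary test object $\mathscr{H} \in D(\operatorname{Mod}(\mathscr{O}_X))$. I will establish a functorial isomorphism
\[ \Hom_{D(\operatorname{Mod})}\bigl((R\fgt(\mathscr{F}) \otimes^{\operatorname{L}} R\fgt(\mathscr{G}))^{\#}, \mathscr{H}\bigr) \;\cong\; \Hom_{D(\operatorname{Mod})}(\mathscr{F} \otimes^{\operatorname{L}} \mathscr{G}, \mathscr{H}), \]
from which the desired isomorphism of objects follows by the Yoneda lemma in the derived category.

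The chain of identifications will run as follows. First apply the derived adjunction $\Hom_{D(\operatorname{Mod})}(X^{\#}, \mathscr{H}) = \Hom_{D(\operatorname{PMod})}(X, R\fgt \mathscr{H})$ to pull the sheafification across; then apply the derived tensor-hom adjunction in the presheaf category $D(\operatorname{PMod}(\mathscr{O}_X))$ to rewrite the source as $\Hom_{D(\operatorname{PMod})}(R\fgt \mathscr{F}, R\uhom_{\operatorname{PMod}}(R\fgt \mathscr{G}, R\fgt \mathscr{H}))$. Now invoke the previous lemma to identify the inner derived internal Hom with $R\fgt R\uhom(\mathscr{G}, \mathscr{H})$. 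A second application of the adjunction $(-)^{\#} \dashv R\fgt$ moves the $R\fgt$ out and converts the source back to $\Hom_{D(\operatorname{Mod})}((R\fgt \mathscr{F})^{\#}, R\uhom(\mathscr{G}, \mathscr{H}))$. The ordinary derived tensor-hom adjunction in $D(\operatorname{Mod}(\mathscr{O}_X))$ will complete the identification, provided we know that $(R\fgt \mathscr{F})^{\#} \simeq \mathscr{F}$.

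The only real point requiring care is this last identification $(R\fgt \mathscr{F})^{\#} \simeq \mathscr{F}$. For this, I will take a $K$-injective resolution $\mathscr{F} \to \mathscr{I}$, use that $\fgt$ preserves $K$-injectives (as already noted in the discussion preceding the lemmas) to compute $R\fgt \mathscr{F} = \fgt(\mathscr{I})$, and then observe that termwise sheafification of $\fgt(\mathscr{I})$ returns $\mathscr{I}$ itself, since sheafifying the underlying presheaf of a sheaf recovers the sheaf. Because sheafification is exact and hence commutes with passage to the derived category, this yields $(R\fgt \mathscr{F})^{\#} = \mathscr{I} \simeq \mathscr{F}$ in $D(\operatorname{Mod}(\mathscr{O}_X))$.

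The main obstacle, if any, is justifying the derived tensor-hom adjunction in the presheaf category and confirming that the various adjunction isomorphisms are natural enough to chain together; these are formal consequences of the existence of $K$-injective and $K$-flat resolutions in both $\operatorname{Mod}(\mathscr{O}_X)$ and $\operatorname{PMod}(\mathscr{O}_X)$, which were recalled at the start of this appendix. Once the chain of isomorphisms above is assembled, Yoneda upgrades it to the required isomorphism of objects in $D(\operatorname{Mod}(\mathscr{O}_X))$.
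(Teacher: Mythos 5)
Your proof is correct and follows the paper's overall Yoneda strategy: chain the derived adjunction $(-)^{\#} \dashv R\fgt$, the tensor--hom adjunctions in $\dpm$ and $\dm$, and the preceding lemma identifying $R\uhom(R\fgt\mathscr{G}, R\fgt\mathscr{H})$ with $R\fgt R\uhom(\mathscr{G}, \mathscr{H})$. Where you differ is in the second half of the chain. After arriving at $\Hom_{\dpm}(R\fgt\mathscr{F}, R\fgt R\uhom(\mathscr{G}, \mathscr{H}))$, the paper rewrites this as $H^0(\Gamma(X, R\uhom(\cdots)))$, invokes the preceding lemma a \emph{second} time to pull $R\fgt$ outside, and then uses $\Gamma \circ R\fgt = R\Gamma$ to reassemble $\Hom_{\dm}(\mathscr{F}, R\uhom(\mathscr{G}, \mathscr{H}))$. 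You instead apply the adjunction $(-)^{\#} \dashv R\fgt$ a second time to reach $\Hom_{\dm}((R\fgt\mathscr{F})^{\#}, R\uhom(\mathscr{G}, \mathscr{H}))$, then observe that the counit $(R\fgt\mathscr{F})^{\#} \to \mathscr{F}$ is an isomorphism, which you justify cleanly by taking a $K$-injective resolution of $\mathscr{F}$, using that $\fgt$ preserves $K$-injectives, and that termwise sheafification of the underlying presheaf of a sheaf recovers the sheaf. Your route invokes the key lemma only once, trading the paper's explicit $H^0\Gamma R\uhom$ reformulation for a second use of the derived adjunction plus the (trivial) counit isomorphism; this is a bit more economical while achieving the same chain of identifications. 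Both arguments are valid.
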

\begin{proof}
 Let $\mathscr{H}$ be any object of $D( \operatorname{Mod}(\mathscr{O}_X))$. Then we have
\begin{eqnarray*} \Hom_{\dm}((R\fgt(\mathscr{F}) \otimes^{\operatorname{L}} R\fgt( \mathscr{G}))^\#, \mathscr{H}) &=&   \Hom_{\dpm}(R\fgt(\mathscr{F}) \otimes^{\operatorname{L}} R\fgt( \mathscr{G}),R\fgt( \mathscr{H})       )   \\
 &=& \Hom_{\dpm}(R\fgt(\mathscr{F}), R\underline{\Hom}(R\fgt( \mathscr{G}), R\fgt(\mathscr{H})      ))  \\
 &=&   \Hom_{\dpm}(R\fgt(\mathscr{F}),R\fgt R\underline{\Hom}( \mathscr{G}, \mathscr{H}       ))   \\ 
 &=& H^0( \Gamma(X,  R\uhom(R\fgt(\mathscr{F}),R\fgt R\underline{\Hom}( \mathscr{G}, \mathscr{H}       )) ))  \\ 
 &=& H^0( \Gamma(X, R\fgt  R\uhom(\mathscr{F},R\underline{\Hom}( \mathscr{G}, \mathscr{H}       ))  ))\\
 &=& H^0( R\Gamma(X,  R\uhom(\mathscr{F},R\underline{\Hom}( \mathscr{G}, \mathscr{H}       ))  ))\\
 &=&  \Hom_{\dm}(\mathscr{F},R\underline{\Hom}( \mathscr{G}, \mathscr{H}       ))   \\ 
 &=& \Hom_{\dm}(\mathscr{F}\otimes^{\operatorname{L}} \mathscr{G}, \mathscr{H}       ).  \end{eqnarray*} 
 By Yoneda's Lemma,  $  \mathscr{F}\otimes^{\operatorname{L}} \mathscr{G} = (R\fgt(\mathscr{F}) \otimes^{\operatorname{L}} R\fgt( \mathscr{G}))^\#$. 
\end{proof} 

\begin{corollary}\label{cor:appsheafification}
For any $\mathscr{F},\mathscr{G} \in D(\operatorname{Mod}(\mathscr{O}_X))$,  $\mathscr{F} \otimes^{\operatorname{L}} \mathscr{G}$ is the (derived) sheafification of the (derived) presheaf  
\[ U \mapsto R\Gamma(U,\mathscr{F}) \otimes^{\operatorname{L}}_{\mathscr{O}_X(U)} R\Gamma(U,\mathscr{G}).\] 
\begin{proof}
By Proposition \ref{prop:appsheafification}, it is enough to note that the diagram
 \[ \begin{tikzcd} \dm \ar{r}{R\fgt} \ar[swap]{dr}{R\Gamma}  & \dpm \ar{d}{\Gamma}  \\
{}  & D(\text{Ab}) \end{tikzcd} \] 
is commutative, and to prove the following fact. For any $\mathcal{F}, \mathcal{G} \in D(\operatorname{PMod}(\mathscr{O}_X))$, the derived category of \textit{presheaves},
\[ \Gamma(U, \mathcal{F} \otimes^{\operatorname{L}} \mathcal{G}) = \Gamma(U, \mathcal{F}) \otimes^{\operatorname{L}} \Gamma(U,\mathcal{G}).\]
Let $K^\bullet$ be a $K$-flat complex that is quasi-isomorphic to $\mathcal{F}$, and $G^{\bullet}$ any complex representing $\mathcal{G}$. The complex $\operatorname{Tot}(K^\bullet \otimes G^\bullet)$ represents $\mathcal{F} \otimes^{\operatorname{L}} \mathcal{G}$ in $D(\operatorname{PMod}(\mathscr{O}_X))$. 
Note that the formation of the total complex commutes with taking sections in the category of presheaves.  Therefore,
\begin{eqnarray*} \Gamma(U, \mathcal{F} \otimes^{\operatorname{L}} \mathcal{G})  &=& \Gamma(U, \operatorname{Tot}(K^\bullet \otimes G^\bullet)) \\
&=& \operatorname{Tot}( \Gamma(U, K^\bullet \otimes G^\bullet))\\
&=& \operatorname{Tot}(\Gamma(U, K^\bullet) \otimes \Gamma(U,G^\bullet)) .
\end{eqnarray*}

\noindent To complete the proof, we must show that if $K^\bullet$ is a $K$-flat complex of presheaves, then $\Gamma(U, K^\bullet)$ is a $K$-flat complex of modules. To this end, let $M^\bullet$ be an acyclic complex of modules, and $\underline{M}^\bullet$ the constant complex of presheaves associated to $M^\bullet$. Then
\begin{eqnarray} \operatorname{Tot}(\Gamma(U, K^\bullet) \otimes M^\bullet) &=& \operatorname{Tot}(\Gamma(U, K^\bullet \otimes \underline{M}^\bullet) \\
&=& \Gamma(U, \operatorname{Tot}( K^\bullet \otimes \underline{M}^\bullet)). \label{eq:acycliccomp}
\end{eqnarray}
Since $M^\bullet$ is acyclic, $\underline{M}^\bullet$ is acyclic. By assumption that $K^\bullet$ is $K$-flat,  $\operatorname{Tot}( K^\bullet \otimes \underline{M}^\bullet)$ is acyclic. The functor $\Gamma(U,-)$ is exact (in the category of presheaves!) and thus commutes with cohomology. Therefore (\ref{eq:acycliccomp}) is acyclic which shows that $\Gamma(U, K^\bullet)$ is a $K$-flat complex of modules.
\end{proof}

\end{corollary}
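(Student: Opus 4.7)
The plan is to reduce, via Proposition~\ref{prop:appsheafification}, to a sectionwise identification at the presheaf level. That proposition already identifies $\mathscr{F} \otimes^{\operatorname{L}} \mathscr{G}$ with $(R\fgt(\mathscr{F}) \otimes^{\operatorname{L}} R\fgt(\mathscr{G}))^{\#}$ in $\dm$. Since sheafification is exact (hence its derived version is computed termwise), it suffices to exhibit, naturally in an object $U$ of $X$, an isomorphism in $D(\mathrm{Ab})$
\[ \Gamma\bigl(U, R\fgt(\mathscr{F}) \otimes^{\operatorname{L}} R\fgt(\mathscr{G})\bigr) \;\simeq\; R\Gamma(U,\mathscr{F}) \otimes^{\operatorname{L}}_{\mathscr{O}_X(U)} R\Gamma(U,\mathscr{G}). \]
The right-hand side is precisely the value at $U$ of the derived presheaf appearing in the statement, so sheafifying both sides will then conclude.

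To produce the displayed isomorphism, I would first observe that $\fgt$ is right adjoint to the exact functor $(-)^{\#}$, so it preserves $K$-injectives; consequently $\Gamma(U, R\fgt(\mathscr{H})) = R\Gamma(U,\mathscr{H})$ for every $\mathscr{H} \in \dm$. Next I would pick a $K$-flat resolution $K^{\bullet} \to R\fgt(\mathscr{F})$ in $\dpm$ (which exists for presheaves of modules) and any complex $G^{\bullet}$ representing $R\fgt(\mathscr{G})$. The derived tensor product in $\dpm$ is then modeled by $\operatorname{Tot}(K^{\bullet} \otimes G^{\bullet})$, and because sections of presheaves are just evaluation at $U$, they commute termwise with tensor products and total complexes, giving
\[ \Gamma\bigl(U, R\fgt(\mathscr{F}) \otimes^{\operatorname{L}} R\fgt(\mathscr{G})\bigr) \;=\; \operatorname{Tot}\bigl(\Gamma(U,K^{\bullet}) \otimes \Gamma(U,G^{\bullet})\bigr). \]

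The main obstacle will be to verify that $\Gamma(U,K^{\bullet})$ is still $K$-flat as a complex of $\mathscr{O}_X(U)$-modules, as only then does the right-hand side compute $R\Gamma(U,\mathscr{F}) \otimes^{\operatorname{L}} R\Gamma(U,\mathscr{G})$. To handle this, given an acyclic complex $M^{\bullet}$ of $\mathscr{O}_X(U)$-modules, I would extend it to an acyclic complex of presheaves $\underline{M}^{\bullet}$ (for instance the constant presheaf on $M^{\bullet}$, which remains acyclic since its sections on any object just return $M^{\bullet}$ or $0$, both acyclic) and use the identity
\[ \operatorname{Tot}\bigl(\Gamma(U,K^{\bullet}) \otimes M^{\bullet}\bigr) \;=\; \Gamma\bigl(U, \operatorname{Tot}(K^{\bullet} \otimes \underline{M}^{\bullet})\bigr). \]
The right-hand side is acyclic, because $K$-flatness of $K^{\bullet}$ in $\dpm$ makes $\operatorname{Tot}(K^{\bullet} \otimes \underline{M}^{\bullet})$ acyclic and $\Gamma(U,-)$ on presheaves is exact. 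This yields the required $K$-flatness; combining it with the previous paragraph and Proposition~\ref{prop:appsheafification} completes the proof.
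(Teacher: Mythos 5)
Your proposal is correct and follows essentially the same route as the paper: reduce via Proposition \ref{prop:appsheafification} and the identity $\Gamma\circ R\fgt = R\Gamma$ to a sectionwise computation of the derived tensor product of presheaves, model it by $\operatorname{Tot}(K^\bullet\otimes G^\bullet)$ with $K^\bullet$ $K$-flat, and establish the key point that $\Gamma(U,K^\bullet)$ remains $K$-flat by testing against the constant presheaf $\underline{M}^\bullet$ and using exactness of $\Gamma(U,-)$ on presheaves. No substantive differences from the paper's argument.
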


\bibliography{bib} 
\bibliographystyle{amsalpha} 

\end{document}